\newtheorem{theorem}{Theorem}
\newtheorem{definition}{Definition}[section]
\newtheorem{proposition}{Proposition}[section]
\newtheorem{coro}{Corollary}[section]
\newtheorem{lemma}{Lemma}[section]
\def \rr {\mathbb{R}}
\def \rn {\mathbb{R}^n}
\def \nn {\mathbb{N}}
\def \rn {\mathbb{R}^n}
\def \tn {\mathbb{T}^n}
\def \eps {\epsilon}
\def \crit {{2^\star}}
\def \ua {u_\alpha}
\def \ue {u_\eps}
\def \hue {\hat{u}_\eps}
\newcommand\bpr[1]{\left(#1\right)}
\newcommand\bsq[1]{\left[#1\right]}
\newcommand\abs[1]{\left\lvert #1 \right\rvert}
\def \hua {\hat{u}_\alpha}
\def \ga {g_\alpha}
\def \va {v_\alpha}
\def \wa {w_\alpha}
\def \tga {\tilde{g}_\alpha}
\def \xa {x_\alpha}
\def \ma {\mu_\alpha}
\def \phia {\varphi_\alpha}
 \newcommand{\sub}{\subset}
\newcommand{\emb}{\hookrightarrow}
\newcommand{\Bal}[2]{B_{#2}(#1)}
\newcommand\Sob[1][k]{H^2_{#1}}
\newcommand{\hSob}[1][k]{D^2_{#1}}
\newcommand{\Cct}{C_c^\infty}
\newcommand{\norm}[1]{\left\| #1 \right\|}
\newcommand{\vprod}[3][g]{\langle #2,#3 \rangle_{#1}}
\newcommand\Snorm[3][(M)]{\norm{#3}_{\Sob[#2]#1}}
\newcommand\Hnorm[3][(\rr^n)]{\norm{#3}_{\hSob[#2]#1}}
\newcommand{\Lnorm}[3][]{\norm{#3}_{L^{#2}#1}}
\newcommand{\inorm}[1]{\norm{#1}_{*}}
\newcommand{\iinorm}[1]{\norm{#1}_{**}}
\newcommand\bigO{\mathcal{O}}
\newcommand{\intM}[2][M]{\int_{#1} #2\, dv_g}
\newcommand{\dg}[2][g]{d_{#1}{(#2)}}
\newcommand{\D}{\Delta}
\newcommand{\Dg}[1][g]{\D_{#1}}
\renewcommand{\a}{\alpha}
\newcommand{\pk}{a_{n,k}}
\newcommand{\pa}{\nu}
\newcommand{\zm}[1][]{z_{#1}, \mu_{#1}}
\newcommand{\zma}[1][]{\a_{#1}, \pa_{#1}}
\newcommand{\param}[1][]{\mathcal{P}(\rho_{#1},\a_{#1})}
\newcommand{\parama}[1][\tau_\a]{\mathcal{P}(#1,\a)}
\newcommand{\eBub}{U}
\newcommand{\Bub}{B}
\newcommand{\Ker}[1][\zma]{\mathcal{K}_{\ifthenelse{\isempty{#1}}{0}{#1}}}
\newcommand{\Keri}[1][i]{\mathcal{K}_{\zma[#1]}}
\newcommand{\Kera}[1][\pa_\a]{\mathcal{K}_{\a,#1}}
\newcommand{\proKe}[1][\zma]{\Pi_{\Ker[#1]^\perp}}
\newcommand{\proKa}[1][\pa_\a]{\Pi_{\Kera[#1]^\perp}}
\newcommand{\proKi}[1][i]{\Pi_{\Keri[#1]^\perp}}
\newcommand{\TBub}[1][]{V_{\zma[#1]}}
\newcommand{\TBuba}[1][\pa_{\a}]{V_{\a,#1}}
\newcommand{\Ze}[1]{Z^{#1}}
\newcommand{\Zed}[2][]{Z^{#2}_{\zma[#1]}}
\newcommand{\Zeda}[2][\pa_{\a}]{Z^{#2}_{\a,#1}}
\newcommand{\Lia}[1][]{L_{\zma[#1]}}
\newcommand{\Tet}[1][]{\Theta_{\zma[#1]}}
\newcommand{\Teta}[1][\pa_{\a}]{\Theta_{\a,#1}}
\newcommand{\Psp}{\Psi^{(p)}}
\newcommand{\Tg}{\Tilde{g}}
\newcommand{\Tp}{\Tilde{\phi}}
\newcommand{\bmu}{\Bar{\mu}}
\newcommand{\bpa}{\Bar{\pa}}
 \def \eps {\epsilon}
\title[Optimal Sobolev inequalities of high order with $L^2-$remainder]{Optimal Sobolev inequalities of high order with $L^2-$remainder}
\author{Lorenzo Carletti}
\address{Lorenzo Carletti, Universit\'e Libre de Bruxelles, Service d'analyse, CP 213, Boulevard du Triomphe, B-1050 Bruxelles, Belgique.}
\email{lorenzo.carletti@ulb.be}
\author{Fr\'ed\'eric Robert}
\address{Fr\'ed\'eric Robert, Universit\'e de Lorraine, CNRS, IECL, F-54000 Nancy, France}
\email{frederic.robert@univ-lorraine.fr}
\date{June 20th, 2025}
\subjclass[2020]{Primary 35J35, Secondary 35J60, 35B44, 35J08, 58J05}
\begin{document}
\begin{abstract} We investigate the validity of the optimal higher-order Sobolev inequality $H_k^2(M^n)\hookrightarrow L^{\frac{2n}{n-2k}}(M^n)$ on a closed Riemannian manifold  when the remainder term is the $L^2-$norm. Unlike the case $k=1$, the optimal inequality does not hold in general for $k>1$. We prove conditions for the validity and non-validity that depend on the geometry of the manifold. Our conditions are sharp when $k=2$ and in small dimensions.
\end{abstract}

\maketitle
%\tableofcontents
\section{Introduction and main results}
Let $k,n\in\nn$ be such that $2\leq 2k<n$ and $(M,g)$ be a compact Riemannian manifold of dimension $n$ without boundary. We define the Sobolev space $H_m^2(M)$ as the completion of $C^\infty(M)$ for the norm $u\mapsto \Vert u\Vert_{H_m^2}:=\sum_{i=0}^m\Vert \nabla^i u\Vert_2$. Sobolev's embedding theorem (see Aubin \cite{aubin} or Hebey \cite{hebey.cims} for an exposition in book form) yields that there exist $A,B>0$ such that 
\begin{equation}\label{ineq:AB}
\left(\int_M|u|^{\crit}\, dv_g\right)^{\frac{2}{\crit}}\leq A\int_M(\Delta_g^{k/2}u)^2\, dv_g +B\Vert u\Vert_{H_{k-1}^2}^2\hbox{ for all }u\in H_k^2(M),
\end{equation}
where $\crit:=\frac{2n}{n-2k}$, $dv_g$ is the Riemannian element of volume, $\Delta_g:=-\hbox{div}_g(\nabla)$ is the Riemannian Laplacian and $\Delta_g^{\frac{p}{2}}:=\nabla\Delta_g^l$ if $p=2l+1$ is odd. Mazumdar \cite{mazumdar:jde} proved that one can take $A$ as close as wanted to the Euclidean constant $K(n,k)>0$ defined as 
\begin{equation}\label{def:K}
\frac{1}{K(n,k)}:=\inf_{u\in D_k^2(\rn)-\{0\}}\frac{\int_{\rn}(\Delta^{k/2}_\xi u)^2\, dx}{\left(\int_{\rn}|u|^{\crit}\, dx\right)^{\frac{2}{\crit}}}
\end{equation}
where $\xi$ denote the Euclidean metric on $\rn$, $D_k^2(\rn)$ is the completion of $C^\infty_c(\rn)$ for the norm $u\mapsto \Vert u\Vert_{D_k^2}:=\Vert\Delta_\xi^{k/2}u\Vert_2$. The value of $K(n,k)$ and the extremals for \eqref{def:K} are explicit as shown by Swanson \cite{swanson}.
%In particular, for all $\eps>0$, there exists $B_\eps>0$ such that
%\begin{equation}\label{eq:epsineq}
%\left(\int_M|u|^{\crit}\, dv_g\right)^{\frac{2}{\crit}}\leq (K(n,k)+\eps)\int_M(\Delta_g^{k/2}u)^2\, dv_g +B_\eps  \Vert u\Vert_{H_{k-1}^2}^2\hbox{ for all }u\in H_k^2(M).
%\end{equation}
Recently, using sharp blow-up analysis Carletti \cite{Car24}, and independently Zeitler \cite{zeitler} have proved that one can take exactly $A=K(n,k)$ in \eqref{ineq:AB}, that is there exists $B>0$ such that
\begin{equation}\label{ineq:opt:k}
\left(\int_M|u|^{\crit}\, dv_g\right)^{\frac{2}{\crit}}\leq  K(n,k) \int_M(\Delta_g^{k/2}u)^2\, dv_g +B_0\Vert u\Vert_{H_{k-1}^2}^2\hbox{ for all }u\in H_k^2(M).
\end{equation}
See Hebey-Vaugon \cite{hebeyvaugon} for the case $k=1$, Hebey \cite{hebey:2003} for $k=2$, and Druet \cite{druet:ma} for $L^p-$inequalities.
In particular, the validity of this optimal inequality is independent of the geometry of $(M,g)$. 

Testing \eqref{ineq:opt:k} against the constant functions, we see that it is necessary to keep a lower-order Lebesgue norm. In this paper, we investigate whether the optimal Sobolev inequality holds when we replace the $H_{k-1}^2-$norm by the $L^2-$norm, which is the lowest natural Hilbert norm possible. This follows the spirit of the AB-programme of Druet-Hebey \cite{druet:hebey:AB}.
% , we investigate the validity of inequality \eqref{ineq:opt:k} . 
By interpolating, it follows from \eqref{ineq:opt:k} and the compact embedding of $H^2_{k}(M)\hookrightarrow H^2_{k-1}(M)$ that for all $\eps>0$, there exists $B_\eps>0$ such that 
\begin{equation}\label{eq:epsineq}
\left(\int_M|u|^{\crit}\, dv_g\right)^{\frac{2}{\crit}}\leq (K(n,k)+\eps)\int_M(\Delta_g^{k/2}u)^2\, dv_g +B_\eps \int_M u^2\, dv_g
\end{equation}
for all $u\in H_k^2(M)$. We say that \eqref{ineq:opt} holds if there exists $B>0$ such that 
\begin{equation}\label{ineq:opt}
\left(\int_M|u|^{\crit}\, dv_g\right)^{\frac{2}{\crit}}\leq  K(n,k) \int_M(\Delta_g^{k/2}u)^2\, dv_g +B\int_Mu^2\, dv_g\tag{$I_{opt,k}$}
\end{equation}
for all $u\in H_k^2(M)$. When $k=1$, $(I_{opt,1})$ is valid since it is exactly \eqref{ineq:opt:k}. So we focus on $k\geq 2$. Surprisingly, in this situation, the geometry plays a role in the validity of \eqref{ineq:opt}, in striking contrast with \eqref{ineq:opt:k}: 
\begin{theorem}\label{th:main} Let $(M,g)$ be a compact Riemannian manifold of dimension $n$ and let $k\in\nn$ such that $n> 2k\geq 4$. 
\begin{itemize}
\item Assume that $n\geq 2k+2$ and that the scalar curvature $R_g$ is positive somewhere. Then \eqref{ineq:opt} does not hold.
\item Assume that $n=2k+1$ or that $\{n\geq 2k+2\hbox{ and } R_g<0\hbox{ everywhere}\}$. Then \eqref{ineq:opt} holds.
\end{itemize}\end{theorem}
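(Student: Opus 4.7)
The overall strategy is to handle the two halves by opposite-direction bubble analysis. For the non-validity half I would construct concentrating test functions and show they violate \eqref{ineq:opt} for every $B>0$; for the validity half I would argue by contradiction, applying a higher-order profile decomposition to a hypothetical violating sequence and reading off a contradiction from the same sharp expansion.

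\textbf{Non-validity.} Fix $x_0\in M$ with $R_g(x_0)>0$ and transplant the Swanson extremal $U$ to $x_0$ via normal coordinates: $\hat u_\epsilon(x):=\eta(x)\,\epsilon^{-(n-2k)/2}\,U(d_g(x,x_0)/\epsilon)$ with $\eta$ a smooth cutoff supported in a small geodesic ball. Expanding each term of \eqref{ineq:opt} as $\epsilon\to 0$ through the Taylor expansion of $g_{ij}$ in normal coordinates, and using the Euclidean sharp equality $\bpr{\int_{\rn}|U|^{\crit}\,dx}^{2/\crit}=K(n,k)\int_{\rn}(\Delta^{k/2}U)^2\,dx$, the computation should yield
\begin{equation*}
\bpr{\int_M|\hat u_\epsilon|^{\crit}\,dv_g}^{2/\crit}-K(n,k)\int_M(\Delta_g^{k/2}\hat u_\epsilon)^2\,dv_g=\gamma\,R_g(x_0)\,\epsilon^2\phi_n(\epsilon)+o\bpr{\epsilon^2\phi_n(\epsilon)},
\end{equation*}
with a positive constant $\gamma$, $\phi_n(\epsilon)=\log(1/\epsilon)$ when $n=2k+2$ and $\phi_n(\epsilon)=1$ otherwise. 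The logarithmic gain in the borderline case $n=2k+2$ reflects the logarithmic divergence at infinity of $\int_{\rn}(\Delta^{k/2}U)^2\,|x|^2\,dx$, the weighted energy that absorbs the $|x|^2$ produced by the metric expansion. A parallel computation gives $\int_M\hat u_\epsilon^2\,dv_g=O(\epsilon^{\min(n-2k,\,2k)})$, with an extra $\log(1/\epsilon)$ when $n=4k$; a case check shows this is $o(\epsilon^2\phi_n(\epsilon))$ throughout the range $n\geq 2k+2$ with $k\geq 2$. Hence for every fixed $B>0$ the test function $\hat u_\epsilon$ violates \eqref{ineq:opt} once $\epsilon$ is small.

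\textbf{Validity.} I would suppose by contradiction that \eqref{ineq:opt} fails; then there exist $B_\alpha\to+\infty$ and $u_\alpha\in H_k^2(M)$ with $\int_M|u_\alpha|^{\crit}\,dv_g=1$ and $K(n,k)\int_M(\Delta_g^{k/2}u_\alpha)^2\,dv_g+B_\alpha\int_M u_\alpha^2\,dv_g<1$. Inequality \eqref{eq:epsineq} bounds $u_\alpha$ in $H_k^2(M)$, forces $\int_M u_\alpha^2\,dv_g\to 0$, and $K(n,k)\int_M(\Delta_g^{k/2}u_\alpha)^2\,dv_g\to 1$. A higher-order profile decomposition in the spirit of \cite{Car24} then reduces $u_\alpha$, up to $o(1)$ in $H_k^2(M)$, to a single rescaled bubble centered at some $z_\alpha\to x_0\in M$ with scale $\mu_\alpha\to 0$, and the same sharp expansion as above yields
\begin{equation*}
1-K(n,k)\int_M(\Delta_g^{k/2}u_\alpha)^2\,dv_g=\gamma\,R_g(z_\alpha)\,\mu_\alpha^2\phi_n(\mu_\alpha)+o\bpr{\mu_\alpha^2\phi_n(\mu_\alpha)}.
\end{equation*}
When $n\geq 2k+2$ and $R_g<0$ everywhere, the right-hand side is negative for large $\alpha$, directly contradicting the strict violation. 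When $n=2k+1$, the bubble gives $\int_M u_\alpha^2\,dv_g\geq c\,\mu_\alpha+o(\mu_\alpha)$, so the violation forces $c\,B_\alpha\,\mu_\alpha\leq O(\mu_\alpha^2)$, hence $B_\alpha=O(\mu_\alpha)\to 0$, contradicting $B_\alpha\to+\infty$.

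\textbf{Main obstacle.} The central difficulty is the sharp expansion above, carried out with error control fine enough to detect the logarithmic enhancement of the scalar-curvature coefficient at $n=2k+2$: the commutators between $\Delta_g^{k/2}$ and the Taylor expansion of the metric, together with the bookkeeping of exact integration-by-parts constants, make this substantially heavier than its second-order counterpart. On the validity side, reducing an arbitrary violating sequence to a single rescaled bubble requires a profile decomposition for polyharmonic-type Sobolev embeddings and precise pointwise control of the concentration profile, both inherited from the blow-up machinery of \cite{Car24}.
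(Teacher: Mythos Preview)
Your non-validity argument is essentially the paper's: concentrate a Swanson extremal at a point where $R_g>0$, expand, and observe that the scalar-curvature correction dominates the $L^2$ remainder when $k\geq 2$ and $n\geq 2k+2$. The paper carries this out using conformal normal coordinates and the conformal covariance of the GJMS operator to isolate the coefficient $c_{n,k}R_g(x_0)$ cleanly, but the structure is the same.

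The validity half, however, has a genuine gap. After the profile decomposition you write $u_\alpha=V_{z_\alpha,\mu_\alpha}+r_\alpha$ with $\|r_\alpha\|_{H_k^2}=o(1)$ and then assert that the \emph{same} sharp expansion
\[
1-K(n,k)\int_M(\Delta_g^{k/2}u_\alpha)^2\,dv_g=\gamma\,R_g(z_\alpha)\,\mu_\alpha^2\phi_n(\mu_\alpha)+o\bigl(\mu_\alpha^2\phi_n(\mu_\alpha)\bigr)
\]
holds for $u_\alpha$. This does not follow: expanding the left side produces a cross term $-2K(n,k)\langle \Delta_g^{k/2}V_\alpha,\Delta_g^{k/2}r_\alpha\rangle$ of size $O(\|r_\alpha\|_{H_k^2})$, which is only $o(1)$ and can easily swamp $\mu_\alpha^2\phi_n(\mu_\alpha)$. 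The test-function computation is valid for the explicit bubble, not for an arbitrary $H_k^2$-perturbation of it. The same problem infects your $n=2k+1$ case: you need an upper bound on the deficit $1-K(n,k)\int(\Delta_g^{k/2}u_\alpha)^2$ of a definite order in $\mu_\alpha$, and $H_k^2$-closeness to a bubble does not provide one.

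The paper closes this gap by a different mechanism. It first replaces the arbitrary violating sequence by a \emph{minimizer} of $J_\alpha$, which (by Mazumdar) exists and solves the Euler--Lagrange equation $\Delta_g^k u_\alpha+\alpha^{2k}u_\alpha=|u_\alpha|^{\crit-2}u_\alpha$. Then, crucially, it proves a pointwise control (Theorem~\ref{th:bndua:der}) showing that $u_\alpha-V_\alpha$ is not merely small in $H_k^2$ but satisfies $|\nabla^l(u_\alpha-V_\alpha)|\leq C\,\Theta_\alpha B_{x_\alpha,\mu_\alpha}$ for an explicit weight $\Theta_\alpha$ carrying extra powers of $\alpha\mu_\alpha$. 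With this in hand, the contradiction is read off not from the Sobolev quotient but from a Pohozaev--Pucci--Serrin identity applied to the PDE; the pointwise control is exactly what allows the error terms $III_\alpha$, $IV_\alpha$ to be bounded by quantities that are genuinely lower order than the scalar-curvature and $L^2$ contributions in $II_\alpha$. Your outline correctly flags ``precise pointwise control of the concentration profile'' as needed, but does not supply the step linking that control to the claimed expansion; the paper's route through the Pohozaev identity is what makes the link.
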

We answer completely the question of the validity of \eqref{ineq:opt} when $k=2$:
\begin{theorem}\label{th:k=2} Let $(M,g)$ be a compact Riemannian manifold of dimension $n\geq 5$. Then 
%
%\begin{equation*}
%\left(\int_M|u|^{\frac{2n}{n-4}}\, dv_g\right)^{\frac{n-4}{n}}\leq  K(n,2) \int_M(\Delta_g u)^2\, dv_g +B\int_Mu^2\, dv_g\hbox{ for all }u\in H_2^2(M).\eqno{(I_{opt,2})}.
%\end{equation*}
%then
$$\{(I_{opt,2})\hbox{ holds }\}\Leftrightarrow \{n=5\hbox{ or }R_g\leq 0\hbox{ everywhere in }M\}.$$
\end{theorem}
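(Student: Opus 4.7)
The plan is to combine Theorem~\ref{th:main} with a refined blow-up analysis covering the single borderline case it leaves open. Applying Theorem~\ref{th:main} with $k=2$: $(I_{opt,2})$ holds for $n=5$, holds for $n\geq 6$ when $R_g<0$ everywhere, and fails for $n\geq 6$ when $R_g>0$ somewhere. So both implications of Theorem~\ref{th:k=2} reduce to a single case, namely $n\geq 6$ with $R_g\leq 0$ on $M$ and $R_g(x_0)=0$ at some $x_0\in M$; in this situation I would show that $(I_{opt,2})$ still holds.

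I argue by contradiction. If $(I_{opt,2})$ fails, then for every $\alpha\in\nn$ the infimum
$$\mu_\alpha := \inf\left\{K(n,2)\intM{(\Delta_g u)^2} + \alpha\intM{u^2} : u\in H_2^2(M),\ \intM{|u|^\crit}=1\right\}$$
is strictly less than $1$, while $\mu_\alpha\to 1$ by Euclidean sharpness. I would produce minimizers $u_\alpha$: since $\alpha\intM{u_\alpha^2}\leq 1$, we get $u_\alpha\rightharpoonup 0$ in $H_2^2(M)$. The sharp concentration analysis of \cite{Car24}, which already underlies Theorem~\ref{th:main}, shows that $u_\alpha$ concentrates at some $x_0\in M$ at a scale $s_\alpha\to 0$, with rescaled profile converging to the Euclidean extremal of~\eqref{def:K}, and yields an expansion of the form
$$1-\mu_\alpha = c_n\, R_g(x_0)\, s_\alpha^{p_n}\,(1+o(1)),$$
with $c_n>0$ and $p_n>0$ depending on $n$ (a logarithmic factor appearing in the critical dimension $n=6$). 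Since $1-\mu_\alpha>0$ and $R_g\leq 0$ on $M$, the leading coefficient must vanish, forcing $R_g(x_0)=0$.

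The main obstacle is then handling this borderline case $R_g(x_0)=0$: the leading geometric term vanishes, and the contradiction must come from the next nontrivial order of the blow-up expansion. Since $x_0$ is an interior maximum of the nonpositive function $R_g$, we have $\nabla R_g(x_0)=0$ and $\Delta_g R_g(x_0)\geq 0$. In geodesic normal coordinates at $x_0$, the Taylor expansion $R_g(\exp_{x_0}(\xi))=\tfrac{1}{2}\nabla^2R_g(x_0)(\xi,\xi)+O(|\xi|^3)$ integrated against the rescaled extremal reduces, by radial symmetry of the bubble, to a definite multiple of $\Delta_g R_g(x_0)\, s_\alpha^{p_n+2}$. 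Combined with bounded curvature contributions (e.g.\ Weyl-type terms which enter only at subleading order in our dimension range), this refined expansion together with $\Delta_g R_g(x_0)\geq 0$ would force $1-\mu_\alpha\leq 0$ for $\alpha$ large, the desired contradiction. The principal technical work is computing these second-order bubble integrals explicitly and carefully tracking the error from the second-order expansion of the metric in normal coordinates.
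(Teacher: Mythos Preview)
Your reduction to the borderline case $n\geq 6$, $R_g\leq 0$, $R_g(x_0)=0$ is correct, but the proposed resolution has a genuine gap. The expansion you write,
\[
1-\mu_\alpha = c_n\, R_g(x_0)\, s_\alpha^{p_n}\,(1+o(1)),
\]
is incomplete: it drops the contribution of the penalty term $\alpha\intM{u_\alpha^2}$, which is part of the functional whose infimum is $\mu_\alpha$ and which is strictly positive at any concentrating profile. This is not a cosmetic omission; that term is exactly what carries the contradiction. Your fallback plan---pushing the expansion to the next order and invoking $\Delta_g R_g(x_0)\geq 0$---does not close the gap: if $R_g\equiv 0$ in a neighborhood of $x_0$ (for instance on any scalar-flat manifold, a case squarely inside the hypothesis $R_g\leq 0$), every term in your scheme vanishes identically and you extract nothing, yet $(I_{opt,2})$ must still be shown to hold.

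The paper avoids this entirely. Rather than expanding the energy of a minimizer, it applies a Pohozaev--Pucci--Serrin identity to the Euler--Lagrange equation $\Delta_g^2 u_\alpha+\alpha^4 u_\alpha=|u_\alpha|^{\crit-2}u_\alpha$ (this is \eqref{eq:45}), which for $k=2$ produces an equality of the schematic form
\[
c_{n,2}\,R_g(x_\alpha)\cdot(\text{positive}) \;=\; k\,\alpha^{4}\!\int(1+O(|x|))\hat u_\alpha^2\,dx \;+\; (\text{lower order}).
\]
The right-hand $L^2$ term is computed precisely (see \eqref{est:udeux}) and shown to be strictly positive of a definite order, while the error terms are controlled using the pointwise estimate of Theorem~\ref{th:bndua:der}. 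Since $R_g(x_\alpha)\leq 0$, the left-hand side is $\leq 0$ and the right-hand side is $>0$: contradiction, with no second-order curvature expansion needed. If you want to repair your energy-expansion route, the fix is to retain the $\alpha\intM{u_\alpha^2}$ term in the expansion from the start; it then gives the contradiction directly at leading order, and the $\Delta_g R_g$ step becomes superfluous.
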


In view of the case $k=2$, it is a natural guess that the validity of \eqref{ineq:opt} could depend   on the sole scalar curvature. However, this is not true as shown in Theorem \ref{th:ok:noIopt:bis} below. Indeed, when $n\geq 2k+4$ and $k>2$, there are Ricci-flat manifolds (and therefore scalar-flat) for which \eqref{ineq:opt} is valid, and some for which it is not. We refer to Section \ref{sec:others} for other results, in particular sharp conditions when $n=2k+2,2k+3$.  

\medskip\noindent 
One fascinating feature in the study of \eqref{ineq:opt} is that the geometry of the manifold contributes to the problem at scales independent of the order $k$ (see  \eqref{exp:2} and \eqref{eq:compMV}). On the contrary, the $L^2$ remainder term in \eqref{ineq:opt} impacts the problem at scales depending on the order. Thus, the interaction between these terms is completely different depending on whether $k$ is small or not. We refer to Section \ref{sec:test} for insights on this issue via test-function computations.

\medskip\noindent An interpretation of these results is that the Sobolev inequalities always hold on a manifold with the same optimal constant as for $\rn$ up to the addition of a natural remainder (here, it is $\Vert\cdot\Vert_{H_{k-1}^2}$). However, if we take a lower-order term as the remainder (here the $L^2-$norm), there are geometric conditions that arise to satisfy the optimal inequality. A similar phenomenon has been observed by Druet-Hebey-Vaugon \cite{druet:hebey:vaugon} and Druet-Hebey \cite{druet:hebey:SP}  who considered $k=1$ and took the $L^1-$norm instead of the natural $L^2-$norm, and by Druet-Hebey-Vaugon \cite{dhv:nash} for the Nash inequality.

\medskip\noindent 
The method developed in this article can be adapted to consider another remainder term in (3). In light of the analysis we develop here, we expect the same type of result to hold when taking any intermediate lower-order norm like the $H^2_l$ norm, for $1\leq l\leq k-2$. This issue also makes sense and can be approached by our method for other functional inequalities like

% This type of result is expected when taking an intermediate remainder term like the $H_{k-2}^2-$norm or the $H_1^2-$norm when $k\geq 3$. This issue also makes sense for other functional inequalities like 
\begin{equation*}
\left(\int_M|u|^{\crit}\, dv_g\right)^{\frac{2}{\crit}}\leq  K(n,k) \int_MuP_{g}^ku\, dv_g +B\int_M u^2\, dv_g
\end{equation*}
for all $u\in H_k^2(M)$, where $P_g^k$ is the conformal GJMS operator introduced by Graham-Jenne-Mason-Sparling \cite{gjms}  of order $2k$ (see \eqref{def:gjms} below). This has been investigated in Djadli-Hebey-Ledoux \cite{dhl} for $k=2$.

\medskip\noindent 
The method we develop works for elliptic equations independently of the order. Previously, the techniques for $k=1$ and $k=2$ relied heavily on the order of the equation arising from the problem. 
Our new approach allows to bypass the previous obstacles by obtaining an improved pointwise description of sequences of solutions to polyharmonic equations. 
Recently, there has been a growing interest on quantitative stability questions, we refer to Deng-Sun-Wei \cite{dengsunwei}, Figalli-Glaudo \cite{figa:glaud}, Nobili-Parise \cite{nob:parisi} for second-order operators. Another area of interest regards compactness of $Q$-curvature conformal metrics: we refer to Khuri-Marques-Schoen \cite{kms} for $k=1$ and Li-Xiong \cite{lx} for $k=2$.
Our method opens new perspectives on such elliptic problems of arbitrary higher order, without positivity assumptions on the operator. 

\medskip\noindent
The proofs of Theorems \ref{th:main} and \ref{th:k=2} rely on the following pointwise control \eqref{eq:estimbndua}, a variant of which was first obtained in Carletti \cite{Car24} in order to prove \eqref{ineq:opt:k}.
\begin{theorem}\label{th:bndua} Let $(M,g)$ be a compact Riemannian manifold of dimension $n$ and let $k\in\nn$ such that $n> 2k>2$. Let $(u_\a)_\a \in C^{2k}(M)$ be a sequence of  solutions $u_\a \not\equiv 0$ to 
    \[  \Dg^k u_\a + \a^{2k} u_\a = |u_\a|^{\crit-2} u_\a \qquad \text{in } M, \quad \forall~\a >0,
        \]
    satisfying 
    \[  \int_M{|u_\a|^\crit} \leq K(n,k)^{-\frac{n}{2k}}+o(1).
        \]
    Then there exists $(x_\a)_\a \in M$, $(\mu_\a)_\a \in \rr_{>0}$ such that $\a\mu_\a  \to 0$ as $\a \to \infty$ and  for all $p \geq 1$, there exists $C_p>0$ such that for all $l=0,...,2k-1$
%\begin{equation}\label{cv:ua:resc}
% \mu_\a^{\frac{n-2k}{2}} u_\a(\exp_{z_\a}(\mu_\a \cdot)) \to \eBub:= \left(\frac{1}{1+a_{n,k}|\cdot|^2}\right)^{\frac{n-2k}{2}} \hbox{ in } C^{2k}_{loc}(\rr^n).
% \end{equation}   
% where $a_{n,k}:=\left(\Pi_{j=-k}^{k-1}(n+2j)\right)^{-\frac{1}{k}}$. 
     \begin{equation}\label{eq:estimbndua}  
        | \nabla^l u_\a(x)| \leq \frac{C_p}{ \big(1+ \a  \dg{z_\a,x} \big)^{p}}\cdot \frac{\mu_\a^{\frac{n-2k}{2}}}{\big(\mu_\a + \dg{x_\a,x}\big)^{n-2k+l}} \qquad \text{for all } x \in M.
        \end{equation}
\end{theorem}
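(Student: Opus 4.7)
The proof naturally splits into a blow-up extraction, a baseline bubble estimate, and a bootstrap using Green's function decay that accounts for the massive term $\a^{2k} u_\a$.

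First, I would perform the standard blow-up analysis for the polyharmonic problem, in the spirit of \cite{Car24}. Since the energy threshold $\int_M |u_\a|^{\crit} \leq K(n,k)^{-n/(2k)} + o(1)$ sits strictly below the two-bubble level, concentration occurs at a single point $x_\a \in M$ at a single scale $\mu_\a>0$, with $\mu_\a \to 0$ and the rescaled function $\hat{u}_\a(y) := \mu_\a^{(n-2k)/2} u_\a(\exp_{x_\a}(\mu_\a y))$ converging in $C^{2k-1}_{\mathrm{loc}}(\rn)$ to a standard Euclidean bubble $U$. A comparison argument at scale $1/\a$ yields $\a \mu_\a \to 0$: otherwise, after rescaling by $\mu_\a$, the mass term $(\a\mu_\a)^{2k} \hat{u}_\a$ would survive in the limiting equation on $\rn$ and prevent convergence to a bubble.

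Second, I would prove the baseline pointwise bubble bound
\[
|u_\a(x)| \leq C\, \frac{\mu_\a^{(n-2k)/2}}{\bpr{\mu_\a + \dg{x_\a, x}}^{n-2k}}, \qquad x \in M,
\]
by the standard contradiction argument: if the ratio of the two sides diverged along a sequence of points $y_\a$, one selects $y_\a$ maximising an appropriate weight, rescales around $y_\a$ at a maximal scale, and extracts a nontrivial finite-energy solution of $\Delta^k v = |v|^{\crit-2}v$ on $\rn$, contradicting the single-bubble concentration. The corresponding bounds on $\nabla^l u_\a$ for $l=1,\dots,2k-1$ then follow from interior elliptic regularity on the scale $(\mu_\a + \dg{x_\a,x})$.

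Third --- and this is the key new ingredient --- I would establish sharp uniform estimates for the Green's function $G_\a$ of $\Dg^k + \a^{2k}$: for every $p\geq 1$ and every $l \in \{0,\dots,2k-1\}$ there exists $C_p$ such that
\[
|\nabla_x^l G_\a(x,y)| \leq \frac{C_p}{\bpr{1 + \a \dg{x,y}}^p} \cdot \frac{1}{\dg{x,y}^{n-2k+l}}.
\]
Near the diagonal one uses the standard polyharmonic parametrix, which produces the $\dg{x,y}^{2k-n-l}$ singularity. On scales much larger than $1/\a$ one uses that $(\Dg^k+\a^{2k})G_\a(\cdot,y) = 0$ there and tests against a weight of the form $(1+\a\dg{\cdot,y})^{2p}$ to obtain weighted $L^2$ decay, which is then upgraded to pointwise decay through iterated elliptic estimates on annuli of width $\sim 1/\a$. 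The main obstacle is precisely here: in the absence of a maximum principle for $k\geq 2$ and of positivity of the operator, and with the need for uniformity in $\a$, one must carry the weighted estimates through for arbitrary order of polynomial decay and for all derivatives up to order $2k-1$, which is substantially more delicate than the $k=1,2$ cases.

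Finally, I would combine these two ingredients through the representation formula
\[
\nabla^l u_\a(x) = \int_M \nabla_x^l G_\a(x,y)\, |u_\a(y)|^{\crit-2} u_\a(y)\, dv_g(y).
\]
Plugging the baseline bubble bound into the nonlinearity gives $|u_\a|^{\crit-2}u_\a(y) \lesssim \mu_\a^{(n+2k)/2}(\mu_\a+\dg{x_\a,y})^{-(n+2k)}$, and splitting the integral into the regions $\{\dg{x_\a,y}\leq \tfrac12\dg{x_\a,x}\}$, $\{\dg{x,y}\leq \tfrac12\dg{x_\a,x}\}$, and the remaining annulus, a careful but routine computation yields exactly the right-hand side of \eqref{eq:estimbndua}, the factor $(1+\a\dg{x_\a,x})^{-p}$ being inherited from the Green's function decay.
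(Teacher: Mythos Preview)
Your strategy is sound but takes a genuinely different route from the paper's. The paper does \emph{not} prove \eqref{eq:estimbndua} by first establishing a baseline bubble bound via a maximisation-and-contradiction argument and then bootstrapping through the Green representation. Instead it deduces Theorem~\ref{th:bndua} from the stronger Theorem~\ref{th:bndua:der}, which is obtained by a \emph{constructive} Lyapunov--Schmidt scheme (Theorem~\ref{prop:unicC0}): one builds the correction $\phi_\a=u_\a-V_{\a,\nu_\a}$ as the unique fixed point of a contraction in the weighted $C^0$ space with norm $\inorm{\cdot}$, after proving uniform invertibility of the linearised operator in that space (Proposition~\ref{prop:estlinC0}). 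The Green's function decay is used inside that linear analysis, and the paper proves it via a Neumann series parametrix (Section~\ref{sec:green:M}) rather than weighted $L^2$ estimates. The payoff of the paper's route is that it directly yields the refined estimate \eqref{eq:estimbndua:der} on $u_\a-V_{\a,\nu_\a}$ with the extra gain $(\a(\mu_\a+\dg{x_\a,x}))^\tau$, which is essential in Section~\ref{sec:blowup}; your approach would give only \eqref{eq:estimbndua}.

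Two technical points on your scheme deserve care. First, a single pass through the representation formula does \emph{not} yield arbitrary polynomial decay: on the region $\{\dg{x,y}<\tfrac12\dg{x_\a,x}\}$ the Green kernel contributes at best $\int_0^{r/2} s^{2k-1}(1+\a s)^{-p}\,ds\sim\a^{-2k}$, so the resulting bound is $B_{x_\a,\mu_\a}(x)\,(\a\mu_\a)^{2k}(1+\a\dg{x_\a,x})^{-4k}$, capped at exponent $4k$. You must iterate the representation, feeding the improved decay of $|u_\a|^{\crit-1}$ back in, to reach any prescribed $p$. Second, the ``standard contradiction argument'' for the baseline bound is more delicate here than for $k=1$ with positive solutions: when you rescale at a secondary point $y_\a$ with scale $\lambda_\a=|u_\a(y_\a)|^{-2/(n-2k)}$, the rescaled equation carries the mass $(\a\lambda_\a)^{2k}$, and nothing a priori prevents $\a\lambda_\a\to\infty$ if $y_\a$ drifts far from $x_\a$; one has to rule this out (e.g.\ via a preliminary weak decay from the Green representation and the smallness of $\a^{2k}\Lnorm{2}{u_\a}^2$) before elliptic regularity applies uniformly. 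None of this is fatal, but it is exactly the kind of difficulty that motivates the paper's constructive approach, which sidesteps both issues by working from the outset in weighted pointwise norms tailored to the two-scale structure $(\mu_\a,\a^{-1})$.
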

Pointwise estimates for second-order equations have been obtained before, see Hebey-Vaugon \cite{hebeyvaugon}, Druet-Hebey \cite{DH} for positive solutions, Premoselli \cite{Pre24} and Premoselli-Robert \cite{PreRob} for sign-changing solutions, see also Robert \cite{robert:gjms} for higher-orders. The method of simple blow-up points yields similar controls for positive solutions, see Khuri-Marques-Schoen \cite{kms} and Li-Xiong \cite{lx}. 
One striking feature of our pointwise control is the fast decay at infinity. This follows from the decay of the Green's function studied in section \ref{sec:green:M}, which is specific to the case of diverging coefficients. A similar decay was obtained in Carletti \cite{Car24}.

\medskip\noindent Theorems \ref{th:main} and \ref{th:k=2} are direct consequences of Theorems \ref{th:ok:Iopt:bis} and \ref{th:ok:noIopt:bis} below. Theorem \ref{th:bndua} is a direct consequence of Theorem \ref{th:bndua:der}. In this paper, we prove Theorems \ref{th:ok:Iopt:bis}, \ref{th:ok:noIopt:bis}  and \ref{th:bndua:der}.  In Section \ref{sec:test}, we perform test-functions estimates in order to prove the non-validity in Theorem \ref{th:ok:noIopt:bis}. In Section \ref{sec:part4}, we prove the pointwise control Theorem \ref{th:bndua:der} by using a constructive approach inspired by Premoselli \cites{Pre22, Pre24} and Carletti \cite{Car24}. This constructive approach is the object of Theorem \ref{prop:unicC0} that is proved in Section \ref{sec:part3}. In Section \ref{sec:blowup}, we perform a blow-up analysis to prove the validity of \eqref{ineq:opt} in Theorem \ref{th:ok:Iopt:bis}. The last section is devoted to the properties of the Green's function for the polyharmonic operator $\Delta_g^k+\alpha^{2k}$ on $\rn$ and on a compact manifold.

\section{Extensions and miscellaneous}\label{sec:others}
\subsection{Some sharp results for the validity of \eqref{ineq:opt}}
Theorems \ref{th:main} and \ref{th:k=2} above are consequences of the following more general results:

\begin{theorem}\label{th:ok:Iopt:bis}  Let $(M,g)$ be a compact Riemannian manifold of dimension $n$ and let $k\in\nn$ such that $n>2k$. Assume that $k>1$
\begin{itemize}
\item If $n=2k+1$, then \eqref{ineq:opt} holds regardless of the geometry
\item Assume that $n\in \{2k+2,2k+3\}$ or $\{k= 2\hbox{ and }n\geq 6\}$: then \{\eqref{ineq:opt} holds \} $\Leftrightarrow$ $\{R_g\leq 0\hbox{ everywhere}\}$
 \end{itemize} \end{theorem}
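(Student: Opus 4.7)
The strategy is a proof by contradiction: assume $(I_{opt,k})$ fails, and derive an impossible asymptotic expansion using the sharp pointwise control of Theorem \ref{th:bndua}. The other direction of the biconditional (non-validity when $R_g>0$ somewhere) is supplied by Theorem \ref{th:ok:noIopt:bis}.

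For the setup, if $(I_{opt,k})$ fails, then for every $\a>0$ the inequality with $B=\a^{2k}$ is reversed by some non-trivial function. Consider
\[
\lambda_\a := \inf\Bigl\{ K(n,k)\!\int_M(\Dg^{k/2}u)^2 dv_g + \a^{2k}\!\int_M u^2\, dv_g : u\in H_k^2(M),\ \int_M|u|^{\crit}dv_g = 1\Bigr\}.
\]
The failure assumption forces $\lambda_\a<1$ for all $\a$, while \eqref{eq:epsineq} gives $\lambda_\a\to 1$. Since $\lambda_\a$ stays strictly below the threshold for loss of compactness, a standard concentration-compactness argument produces minimizers $u_\a\not\equiv 0$; after a multiplicative normalization, they solve
\[
\Dg^k u_\a + \a^{2k} u_\a = |u_\a|^{\crit-2} u_\a,\qquad \int_M|u_\a|^{\crit}dv_g \leq K(n,k)^{-n/(2k)}+o(1).
\]
Theorem \ref{th:bndua} then applies, supplying a concentration point $x_\a\to x_0\in M$, a scale $\mu_\a\to 0$ with $\a\mu_\a\to 0$, and the bound \eqref{eq:estimbndua}.

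The heart of the proof is a sharp asymptotic expansion at $x_0$, obtained by combining \eqref{eq:estimbndua} with Riemannian (or conformal) normal coordinates at $x_\a$ and the rescaling $\hat u_\a(y) := \mu_\a^{(n-2k)/2}u_\a(\exp_{x_\a}(\mu_\a y))$, which converges smoothly on compact sets to a Euclidean extremal $U_0$ of \eqref{def:K}. The fast decay factor $(1+\a\dg{z_\a,x})^{-p}$ in \eqref{eq:estimbndua} cuts off tail contributions at scale $\a^{-1}$ and justifies dominated convergence in the conditionally convergent tail integrals. Using $\sqrt{|g|}=1-\tfrac{1}{6}R_g(x_\a)|y|^2\mu_\a^2+O(\mu_\a^3)$ together with the coordinate expansion of $\Dg^{k/2}$, I obtain
\begin{align*}
\int_M(\Dg^{k/2}u_\a)^2 dv_g &= \int_{\rn}(\Delta_\xi^{k/2}U_0)^2 dx + c_{n,k}R_g(x_0)\mu_\a^2 + o(\mu_\a^2),\\
\int_M|u_\a|^{\crit}dv_g &= \int_{\rn}U_0^{\crit}dx + c'_{n,k}R_g(x_0)\mu_\a^2 + o(\mu_\a^2),\\
\a^{2k}\int_M u_\a^2\, dv_g &= \Omega_{n,k}\,\a^{2k}\tau_\a\,(1+o(1)),
\end{align*}
where $\tau_\a=\mu_\a^{n-2k}$ if $2k<n<4k$, $\tau_\a=\mu_\a^{2k}|\log\mu_\a|$ if $n=4k$, and $\tau_\a=\mu_\a^{2k}$ if $n>4k$, with $\Omega_{n,k}>0$ explicit. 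In the ranges covered by the theorem, no other geometric term reaches the order $\mu_\a^2$.

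Plugging these into the failure condition $(\int|u_\a|^{\crit})^{2/\crit}>K(n,k)\!\int(\Dg^{k/2}u_\a)^2+\a^{2k}\!\int u_\a^2$ and using the equation together with the Euclidean identity $(\int U_0^{\crit})^{2/\crit}=K(n,k)\int(\Delta_\xi^{k/2}U_0)^2$, the leading order cancels and the inequality reduces to
\[
-\gamma_{n,k}\,R_g(x_0)\,\mu_\a^2 + o(\mu_\a^2) \;>\; (1-K(n,k))\,\Omega_{n,k}\,\a^{2k}\tau_\a + o(\a^{2k}\tau_\a),
\]
with $\gamma_{n,k}>0$. For $n=2k+1$ we have $\tau_\a=\mu_\a$; the two scales $\mu_\a^2$ and $\a^{2k}\mu_\a$ cannot balance consistently with $\a\mu_\a\to 0$ (any matching would force $\mu_\a\sim\a^{2k}$, hence $\a\mu_\a\to\infty$), so concentration itself is ruled out irrespective of the sign of $R_g$. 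For $n\in\{2k+2,2k+3\}$, and for $k=2$ with $n\geq 6$, the hypothesis $R_g\leq 0$ gives $R_g(x_0)\leq 0$, making the left-hand side non-positive while the right-hand side is non-negative and of comparable or larger order; contradiction. The main obstacle is the rigorous derivation and sign tracking of the constants $\gamma_{n,k}$, $c_{n,k}$, $c'_{n,k}$ and $\Omega_{n,k}$, which I expect to obtain by carefully integrating the metric expansion against the bubble using the pointwise bound, mirroring the test-function computations of Section \ref{sec:test} where the opposite inequality is built by hand under $R_g(x_0)>0$; the case $k=2$, $n\geq 4k$ further requires handling the new scales of $\tau_\a$, but the explicit fourth-order structure keeps the scalar-curvature term leading.
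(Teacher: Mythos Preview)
Your setup (contradiction via minimizers, then blow-up) matches the paper, but two of your asymptotic expansions are wrong in precisely the dimensions the theorem covers, and your method diverges from the paper's in a way that matters.

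First, the $L^2$ estimate: for $n<4k$ you write $\tau_\a=\mu_\a^{n-2k}$, giving $\a^{2k}\int u_\a^2\sim \a^{2k}\mu_\a^{n-2k}$. But $U_0\notin L^2(\rn)$ in this range, so the $L^2$ mass of $u_\a$ lives at the \emph{outer} scale $\a^{-1}$ (exactly the cutoff you mention), and the correct order is $(\a\mu_\a)^{n-2k}$; for $n=2k+1$ these differ by $\a^{2k-1}$. Getting the right constant is nontrivial: the paper rescales at scale $\a^{-1}$ and shows $\a^{-(n-2k)}\mu_\a^{-(n-2k)/2}u_\a(\exp_{x_\a}(\a^{-1}\cdot))\to c_U\Gamma$ with $\Gamma$ the Green's function of $\Delta_\xi^k+1$ on $\rn$ (Lemma \ref{prop:rescal}), yielding \eqref{est:udeux}. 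Second, the scalar-curvature correction: you claim a universal $c_{n,k}R_g(x_0)\mu_\a^2$, but the coefficient integral $\int_{\rn}(\Delta_\xi^{(k-1)/2}U_0)^2$ diverges for $n\le 2k+2$; the actual contribution is $O(\mu_\a/\a)$ when $n=2k+1$ and $\sim\mu_\a^2\ln(1/(\a\mu_\a))$ when $n=2k+2$ (see \eqref{est:deltaudeux}). Both errors hit exactly the cases you are trying to settle.

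Methodologically, the paper does not plug expansions into the failure inequality; it applies the Pohozaev--Pucci--Serrin identity in the conformal chart to obtain an \emph{equality} \eqref{eq:45} balancing the scalar-curvature term against $k\a^{2k}\int\hat u_\a^2$, with error $O(\theta_\a)+O(\theta_\a')$. Controlling those errors to the required precision uses the refined approximation of Theorem \ref{th:bndua:der} (the bound \eqref{eq:estimbndua:der} on $u_\a-V_{\a,\nu_\a}$), not merely the crude bound \eqref{eq:estimbndua} of Theorem \ref{th:bndua} that you invoke. A direct energy expansion to order $\mu_\a^2$ would likewise require that refined control; with only \eqref{eq:estimbndua} the unknown shape of $u_\a$ produces errors at the same order as the geometric term. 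Finally, the factor $(1-K(n,k))$ in your last display has no controlled sign, so even with correct scalings the inequality route would need extra care.
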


\begin{theorem}\label{th:ok:noIopt:bis}  Let $(M,g)$ be a compact Riemannian manifold of dimension $n$ and let $k\in\nn$ such that $n>2k$. Assume that $k>1$ and $n\geq 2k+2$
\begin{itemize}
\item \eqref{ineq:opt} holds if $R_g<0$ everywhere;
\item \eqref{ineq:opt} does not holds if $R_g(x)>0$ somewhere;
\item \eqref{ineq:opt} holds if $Rm_g\equiv 0$ everywhere (for example the flat torus) 
\item If in addition, we assume that $k>2$ and $n\geq 2k+4$, then \eqref{ineq:opt} does not holds if $(M,g)$ is Ricci-flat but not flat.
\end{itemize} \end{theorem}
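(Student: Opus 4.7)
The theorem splits into two validity statements, items (1) and (3), and two non-validity statements, items (2) and (4); I treat them in two groups.

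For items (1) and (3) I argue by contradiction. If \eqref{ineq:opt} fails, then from \eqref{eq:epsineq} and a subcritical minimization one produces, for each $\alpha \to +\infty$, a nontrivial solution $u_\alpha \in C^{2k}(M)$ of
\[
\Dg^k u_\alpha + \alpha^{2k} u_\alpha = |u_\alpha|^{\crit-2} u_\alpha, \qquad \int_M |u_\alpha|^\crit\, dv_g \le K(n,k)^{-n/(2k)} + o(1),
\]
to which Theorem \ref{th:bndua} applies: there are concentration points $x_\alpha \to x_0$ and scales $\mu_\alpha$ with $\alpha\mu_\alpha \to 0$, and the sharp pointwise control \eqref{eq:estimbndua} holds. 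The plan is to expand the energy identity (or, equivalently, test the equation against a Pohozaev-type multiplier based at $x_\alpha$) and insert \eqref{eq:estimbndua} to control the remainders, extracting a leading geometric term proportional to $R_g(x_0)\mu_\alpha^2$ and a subleading Weyl-type term of order $\mu_\alpha^4$. Under $R_g<0$ everywhere (item (1)) the sign of this leading term forces a direct contradiction. In item (3), where $Rm_g \equiv 0$, every curvature invariant vanishes and no geometric contradiction is available at any finite order; the contradiction must then come from the $L^2$ remainder $B\int_M u_\alpha^2$, where the fast-decay factor $(1+\alpha d_g(x_\alpha,x))^{-p}$ in \eqref{eq:estimbndua} is decisive: it localizes $u_\alpha$ at the scale $\alpha^{-1}$ and yields a sharper upper bound on $\int_M u_\alpha^2$ than the bubble $L^2$ norm alone, which is incompatible with the energy balance for large $\alpha$.

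For items (2) and (4) I would exhibit an explicit violating family. Pick $x_0 \in M$ at which the relevant invariant is strictly positive: $R_g(x_0)>0$ in item (2), or $|W_g(x_0)|^2 > 0$ in item (4) (possible on any Ricci-flat non-flat $(M,g)$). With $\eta$ a cutoff supported in a small geodesic ball around $x_0$ and $U$ a Swanson extremal for $K(n,k)$, set
\[
u_\mu(x) := \eta(x)\, \mu^{-(n-2k)/2}\, U\!\left(\mu^{-1}\exp_{x_0}^{-1}(x)\right).
\]
The test-function estimates of Section \ref{sec:test} then yield, for small $\mu$ and with explicit positive constants $A_{n,k}$, $B_{n,k}$, $D_{n,k}$,
\[
\left(\int_M |u_\mu|^\crit\, dv_g\right)^{2/\crit} - K(n,k)\int_M (\Dg^{k/2} u_\mu)^2\, dv_g = A_{n,k}R_g(x_0)\mu^2 + B_{n,k}|W_g(x_0)|^2 \mu^4 + o(\mu^4),
\]
while $\int_M u_\mu^2\, dv_g$ scales as $\mu^{n-2k}$ when $n<4k$, as $\mu^{2k}|\log\mu|$ when $n=4k$, and as $\mu^{2k}$ when $n>4k$. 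In the generic subcases $n \ge 2k+3$ for item (2) and $n \ge 2k+5$ for item (4), the geometric correction is of strictly smaller power of $\mu$ than any of the $L^2$ remainder scales (since $n-2k>2$ and $2k>2$ in item (2), and $n-2k>4$ and $2k>4$ in item (4)), so $u_\mu$ violates \eqref{ineq:opt} for every fixed $B>0$ once $\mu$ is small enough.

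The principal obstacle is the borderline dimensions $n=2k+2$ in item (2) and $n=2k+4$ in item (4), where the geometric correction and the $L^2$ remainder live at exactly the same power of $\mu$ and a bare bubble fails to violate \eqref{ineq:opt} for $B$ large. I would handle these cases by a refined ansatz: add to $u_\mu$ a subleading correction of the form $\mu^{(n-2k)/2}\psi$, with $\psi$ an auxiliary function built from the Green's function of $\Dg^k+\alpha^{2k}$ studied in the last section of the paper. This correction breaks the scale match between the geometric correction and the $L^2$ remainder and restores a violation of \eqref{ineq:opt} for every $B>0$. Thus the whole argument is driven by the sharp pointwise bound of Theorem \ref{th:bndua}, the sharp test-function asymptotics of Section \ref{sec:test}, and the Green's function decay, combined to force either a sign contradiction (validity) or a quantitative violation (non-validity).
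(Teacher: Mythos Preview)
Your sketch for the validity items (1) and (3) is broadly aligned with the paper, though the paper relies on the \emph{refined} pointwise estimate of Theorem~\ref{th:bndua:der} (controlling $u_\alpha - V_{\alpha,\nu_\alpha}$, not just $u_\alpha$) to handle the error terms $III_\alpha$ in the Pohozaev balance; Theorem~\ref{th:bndua} alone would not suffice. For the flat case (3) the paper's mechanism is sharper than what you describe: since the pull-back metric is exactly Euclidean, the term $III_\alpha$ vanishes identically, and the Pohozaev identity collapses to $\alpha^{2k}\int \hat u_\alpha^2 = O(\mu_\alpha^{n-2k}\alpha^{-2q})$, which directly contradicts the explicit lower bound \eqref{est:udeux} on $\alpha^{2k}\int\hat u_\alpha^2$.

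Your treatment of the non-validity items contains a genuine error. The asymptotic you write, $A_{n,k}R_g(x_0)\mu^2 + B_{n,k}|W_g(x_0)|^2\mu^4 + o(\mu^4)$, is wrong precisely at the borderline dimensions you single out: when $n=2k+2$ the scalar-curvature correction is $\mu^2\ln(1/\mu)$, not $\mu^2$ (this is the $\theta_\epsilon$ of Section~\ref{sec:test}), and when $n=2k+4$ the Weyl correction is $\mu^4\ln(1/\mu)$, not $\mu^4$ (see \eqref{eq:compMV}). With the logarithm present, the geometric term strictly dominates the $L^2$ remainder ($O(\mu^2)$ and $O(\mu^4)$ respectively), and the bare bubble already violates \eqref{ineq:opt} for every $B>0$. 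So your ``refined ansatz'' with a Green's-function correction is unnecessary, and the diagnosis that motivated it is incorrect. Separately, for item (4) the paper does not expand $\int(\Delta_g^{k/2}u_\mu)^2$ directly to order $\mu^4$; it exploits that on a Ricci-flat (hence Einstein) manifold the GJMS operator satisfies $P_g^k=\Delta_g^k$, then uses conformal invariance \eqref{invar:gjms} together with the Mazumdar--V\'etois expansion \eqref{eq:compMV} to isolate the Weyl term. Your outline omits this step, and without it obtaining the correct $\mu^4$ coefficient is substantially harder.
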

Remarks:
\begin{itemize}
\item There are examples of Ricci-flat manifolds that are non-flat in arbitrary high dimension (see Yau \cite{yau})
\item Unlike the case $k=2$ or $n\leq 2k+3$, the last example shows that information on the sole scalar curvature is not enough to decide the validity of \eqref{ineq:opt} in the other situations. In order to go further one would need to assume a sufficient vanishing order of the Weyl tensor as in Druet-Hebey-Vaugon \cite{druet:hebey:vaugon}, Druet-Hebey \cite{druet:hebey:SP}, and Hebey-Vaugon \cite{hebey:vaugon:equiv}.
%\item dire un mot dans l'intro sur le terme en weyl qui intervient a l'orde 4 [xxx] et donc imporytance de k=2
\end{itemize}

\subsection{Refined approximation by the bubble}
We define
\begin{equation}\label{def:U}
 U(X):= \left(\frac{1}{1+a_{n,k}|X|^2}\right)^{\frac{n-2k}{2}} \hbox{ for all  } X\in\rn,
 \end{equation}   
where $a_{n,k}:=\left(\Pi_{j=-k}^{k-1}(n+2j)\right)^{-\frac{1}{k}}$. It follows from Swanson \cite{swanson} that $U\in D_k^2(\rn)$ is an extremal for \eqref{def:K} and satisfies $\Delta_\xi^kU=U^{\crit-1}$. Indeed, all extremals for \eqref{def:K}  arise from $U$ \cite{swanson}, and all positive solutions to $\Delta_\xi^kV=V^{\crit-1}$ are equal to $U$ up to conformal transformation (Wei-Xu \cite{WeiXu99}).   We fix $N\in\nn$. It follows from Lee-Parker \cite{lp} that for all $p\in B_{\delta}(x_0)$, there exists $\varphi_p\in C^\infty(M)$, $\varphi_p>0$, such that, setting $g_p:=\varphi_p^{\frac{4}{n-2k}}g$, we have that 
\begin{equation}\label{def:phi}
\varphi_p(p)=1\, ,\, \nabla\varphi_p(p)=0\, ,\, \hbox{Ric}_{g_p}(p)=0\hbox{ and }dv_{g_p}=\sqrt{|g_p|}\, dx=(1+O(|x|^N))\, dx
\end{equation}
Moreover, the map $p\mapsto \varphi_p$ is continuous. We fix once for all $\delta < i_{g_p}/2$ for all $p\in M$, where $i_h>0$ is the injectivity radius of the manifold $(M,h)$. 
\begin{definition}
    For $z\in M$ and $\mu>0$, we define 
    \[  \Bub_{\zm}(x) = \bpr{\frac{\mu}{\mu^2 + \pk d_{g_z}(z,x)^2}}^{\frac{n-2k}{2}} \qquad \text{for }x \in M.
        \]
\end{definition}
For $x \in \Bal{z}{\delta}$, we then have $\Bub_{\zm}(x) = \mu^{-\frac{n-2k}{2}} \eBub\Big(\tfrac{1}{\mu}(\exp_z^{g_z})^{-1}(x)\Big)$ where the exponential map is taken with respect to the metric $g_z$. Given $\a >0$ and $\rho\leq 1$, we define the parameter set 
    \[  \param := \{ (\zm) \in M\times (0,+\infty) \,;\, \a\mu  \leq \rho\}.
        \]

\begin{definition}
    Let $\a \geq \delta^{-1}$ and $\rho\leq 1$, $\pa = (\zm) \in \param$ and let $\chi \in \Cct(\rr_{\geq 0})$ be such that $\chi \equiv 1$ in $[0,1)$ and $\chi \equiv 0$ in $(2,+\infty)$ %and for any $a>0$, $\chi^\prime\chi^{a-1}$ is uniformly bounded in $\rr$ 
    we define 
    \begin{equation}\label{def:TBub}  
        \TBub(x) := \chi(\a d_{g_z}(z,x))\, \varphi_z(x)\, \Bub_{\zm}(x) \qquad \text{for all } x \in M.
        \end{equation}
\end{definition}
%[xxx]The assumption on boundedness will be used  to ensure that $\TBub^{\crit-2}\in C^1(M)$. but indeed we do not it need it anymore[xxx]

\smallskip\noindent Theorem \ref{th:bndua} is a consequence of the following more precise result:
\begin{theorem}\label{th:bndua:der} Let $(M,g)$ be a compact Riemannian manifold of dimension $n$ and let $k\in\nn$ such that $n> 2k >2$. Let $(u_\a)_\a \in C^{2k}(M)$ be a sequence of  solutions $u_\a \not\equiv 0$ to 
\begin{equation}\label{eq:ua:th22}\Dg^k u_\a + \a^{2k} u_\a = |u_\a|^{\crit-2} u_\a \qquad \text{in } M, \quad \forall~\a >0,
\end{equation}
    satisfying 
\begin{equation}\label{bnd:nrj}
\int_M{|u_\a|^\crit} \leq K(n,k)^{-\frac{n}{2k}}+o(1).
\end{equation}
Then there exists $(x_\a)_\a \in M$, $(\mu_\a)_\a \in \rr_{>0}$ such that $\a\mu_\a  \to 0$ as $\a \to \infty$, and such that
\begin{equation}\label{cv:ua:resc}
 \mu_\a^{\frac{n-2k}{2}} u_\a(\exp_{x_\a}(\mu_\a \cdot)) \to \eBub:= \left(\frac{1}{1+a_{n,k}|\cdot|^2}\right)^{\frac{n-2k}{2}} \hbox{ in } C^{2k}_{loc}(\rr^n).
 \end{equation}
 Moreover, for all $p \geq 1$, and for all $\tau \in \rr$ such that $0<\tau\leq\min\left\{2,\frac{n-2k}{2}\right\}$, there exists $C_{p,\tau}>0$ such that
    \begin{equation}\label{eq:estimbndua:der}  
        (\mu_\a+ \dg{x_\a,x})^{l}|\nabla_g^l \left(u_\a -V_{\alpha, (x_\alpha,\ma)}\right)(x)| \leq C_{p,\tau}\frac{(\alpha(\ma+d_g(x_\a,x)))^\tau}{(1+\alpha d_g(\xa, x))^p}B_{\xa,\ma}(x)
        \end{equation}
        for all $x\in M$, and $l=0,...,2k-1$.
\end{theorem}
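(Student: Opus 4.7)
First I would run a standard concentration analysis to pin down $(\xa,\ma)$ and prove $\alpha\ma\to 0$. Testing \eqref{eq:ua:th22} against $\ua$ and using \eqref{bnd:nrj} yield $\int_M(\Dg^{k/2}\ua)^2\leq C$ and $\alpha^{2k}\int_M\ua^2\leq C$; combined with the Sobolev lower bound $\int_M|\ua|^{\crit}\geq c>0$ (forced by \eqref{def:K} and $\ua\not\equiv 0$) and interpolation between $L^2$ and $L^{\crit}$, this forces $\Vert\ua\Vert_\infty\to\infty$. Choose $\xa\in M$ with $|\ua(\xa)|=\Vert\ua\Vert_\infty$ and set $\ma:=|\ua(\xa)|^{-2/(n-2k)}$, so that $\hua(X):=\ma^{\frac{n-2k}{2}}\ua(\exp_{\xa}(\ma X))$ satisfies a rescaled version of \eqref{eq:ua:th22} on balls of radius $\sim\ma^{-1}$, with zero-order coefficient $(\alpha\ma)^{2k}$ and a metric converging to the Euclidean one. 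Since $|\hua|\leq 1$, elliptic regularity produces a $C^{2k}_{loc}$-limit $W\not\equiv 0$ solving $\Delta_\xi^k W+\gamma^{2k}W=|W|^{\crit-2}W$ on $\rn$ with $\gamma:=\lim\alpha\ma$. If $\gamma>0$, Fatou and testing against $W$ combined with \eqref{def:K} would force $\int_\rn|W|^{\crit}>K(n,k)^{-n/(2k)}$, contradicting \eqref{bnd:nrj}; hence $\alpha\ma\to 0$, and Wei-Xu's classification forces $W=\pm\eBub$. Up to replacing $\ua$ by $-\ua$, this is \eqref{cv:ua:resc}.

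\textbf{Step 2: Optimal parametrization.} Next I fix $\rho\leq 1$ small and perform a Lyapunov-Schmidt reduction on the approximate-bubble manifold $\{\TBuba[\pa]\,:\,\pa\in\param\}$. Using $\hua\to\eBub$ in $C^{2k}_{loc}$, the non-degeneracy of $\eBub$ (its linearized kernel is spanned by the $Z^i$'s) and a standard implicit-function argument, I produce parameters $\bpa_\a=(\bz_\a,\bmu_\a)\in\param$ with $\dg{\xa,\bz_\a}=o(\ma)$ and $\bmu_\a/\ma\to 1$ such that
\[
\phi_\a:=\ua-\TBuba[\bpa_\a]\in\Kera[\bpa_\a]^\perp,
\]
and $\phi_\a$ is small in the natural energy norm. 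After relabelling $\xa$ and $\ma$ by $\bz_\a$ and $\bmu_\a$ (which leaves the claim unchanged), the task reduces to a pointwise control of $\phi_\a$.

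\textbf{Step 3: Pointwise estimate and bootstrap.} The control on $\phi_\a$ is exactly the content of the core constructive Theorem \ref{prop:unicC0}. Its proof uses the Green's representation of $\phi_\a$ as a convolution of $\Gga$ against the nonlinear defect $|\ua|^{\crit-2}\ua-\TBuba[\bpa_\a]^{\crit-1}$, the residual $R_\a:=\Dg^k\TBuba[\bpa_\a]+\alpha^{2k}\TBuba[\bpa_\a]-\TBuba[\bpa_\a]^{\crit-1}$, and the Lagrange multipliers enforcing the orthogonality constraint. The crucial input is the sharp decay of $\Gga$ for the operator $\Dg^k+\alpha^{2k}$ established in the last section of the paper: unlike the kernel of $\Dg^k$ alone, $\Gga(x,y)$ decays faster than any polynomial in $\alpha\dg{x,y}$ once $\dg{x,y}\gg 1/\alpha$. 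Starting from a crude weighted $L^\infty$ bound on $\phi_\a$, a convolution estimate of $\Gga$ against $\Bub_{\xa,\ma}^{\crit-2}\phi_\a$ and a bootstrap on the exponent $p$ upgrade this crude bound to the claimed control by $\tfrac{(\alpha(\ma+\dg{\xa,x}))^\tau}{(1+\alpha\dg{\xa,x})^p}\Bub_{\xa,\ma}(x)$; the threshold $\tau\leq\min\{2,\tfrac{n-2k}{2}\}$ reflects the optimal size of the residual $R_\a$ in these weighted norms. Once this $C^0$ estimate is in hand, differentiating \eqref{eq:ua:th22} and rescaling on balls of radius comparable to $\ma+\dg{\xa,x}$ promote it to derivatives of order $l\leq 2k-1$, which is \eqref{eq:estimbndua:der}.

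\textbf{Main obstacle.} The principal difficulty is precisely the interplay of the two scales: the bubble is concentrated on the scale $\ma$, whereas $\Gga$ decays on the much larger scale $1/\alpha\gg\ma$, and since $k>1$ no maximum principle is available to simplify the analysis. The tailor-made weighted norms $\|\cdot\|_\star$ and $\|\cdot\|_{\star\star}$ underlying Theorem \ref{prop:unicC0} are designed exactly to decouple these two regimes, and the coercivity of $\Dg^k+\alpha^{2k}$ modulo the finite-dimensional kernel $\Kera[\bpa_\a]$ is what ultimately makes the resulting fixed-point scheme converge uniformly in $\alpha$.
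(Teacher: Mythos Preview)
Your outline matches the paper's closely, and Steps 1--2 are essentially the Lemma and the first paragraph of the proof in Section~\ref{sec:part4}. One logical link is missing in Step 3. The remainder $\phi_\a=u_\a-\TBuba[\nu_\a]$ you obtain in Step 2 lies in $\Kera[\nu_\a]^\perp$ and is small in $H_k^2$, whereas Theorem~\ref{prop:unicC0} \emph{constructs} a solution $\tilde\phi_\a$ by a fixed-point argument inside a weighted $C^0$-ball; a priori these are two different objects, and Theorem~\ref{prop:unicC0} only asserts uniqueness inside that $C^0$-ball, which you have not shown $\phi_\a$ enters. The paper closes this gap via the $H_k^2$-uniqueness of Proposition~\ref{prop:unicHk}: both $\phi_\a$ and $\tilde\phi_\a$ solve the projected equation~\eqref{eq:modcritC0} in $\Kera[\nu_\a]^\perp$ and both have small $H_k^2$-norm (the latter by the last clause of Theorem~\ref{prop:unicC0}), hence they coincide and $\phi_\a$ inherits the pointwise bound~\eqref{bnd:phi}. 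Your description of a ``direct bootstrap on $\phi_\a$'' is not how the paper proceeds: the linear estimate (Proposition~\ref{prop:estlinC0}) is proved by contradiction via blow-up of the linearized equation, not by iteration, and the arbitrary exponent $p$ comes in one shot from the Green's-function decay of Theorem~\ref{th:green:M}, not from bootstrapping. In particular you never say where the ``crude weighted $L^\infty$ bound'' you start from would come from; producing it is precisely what the construct-and-identify route supplies.

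Two minor points on Step 1. Your contradiction argument for $\alpha\ma\to 0$ is valid but differs from the paper's, which uses the $\epsilon$-inequality~\eqref{eq:epsineq} to force $\alpha^{2k}\int_M\ua^2\to 0$ directly and then reads off $\alpha\ma\to 0$ by Fatou. Also, Wei--Xu classify \emph{positive} solutions only; the paper instead shows the limit $v_\infty$ attains the exact energy level $K(n,k)^{-n/(2k)}$, hence is an extremal for~\eqref{def:K}, and extremals are classified (up to sign) by Swanson.
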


\section{Theorems \ref{th:ok:Iopt:bis} and  \ref{th:ok:noIopt:bis}: proof of   non-validity via test-functions}\label{sec:test}
We use the notations of Robert \cite{robert:gjms}. We consider an elliptic operator
\begin{equation}
P=\Delta_g^k + \Delta_g^{\frac{k-1}{2}}(h\Delta_g^{\frac{k-1}{2}})-\Delta_g^{k-2}((T ,\nabla_g^2))+\sum_{i=0}^{k-2} (-1)^i\nabla_g^i(A^{(i)}  \nabla_g^i)\label{def:P}
\end{equation}
where $h\in C^{k-1}(M)$, $T$ is a smooth field of symmetric $(2,0)-$tensors and for all $i=0,...,k-2$, $A^{(i)}$ is a smooth symmetric $(0, 2i)-$tensor. When $k=1$, we take $T\equiv 0$ and the sum is empty. We fix $x_0\in M$ and we define
$$\hat{u}_\eps(x):=\eta(x)\left(\frac{\eps}{\eps^2+a_{n,k}d_{g_{x_0}}(x,x_0)^2}\right)^{\frac{n-2k}{2}}=\eta(x) \Bub_{x_0,\eps}(x) \hbox{ for all }x\in M$$
where $\eta\in C^\infty(M)$ is such that $\eta(x)= 1$ for $x\in B_\delta(x_0)$ and $\eta(x)= 0$ for $x\in M\setminus B_{2\delta}(x_0)$. We set  $\ue:=\varphi_{x_0}\hat{u}_\eps,$
where $\varphi_{x_0}$ is the conformal factor defined in \eqref{def:phi}.
The class of operators of type $P$ includes the well known GJMS operator $P_g^k$, 
\begin{equation}\label{def:gjms}
P_g^k=\Delta_g^k+\sum_{i=0}^{k-1} (-1)^i\nabla_g^i(A_{g}^{(i)} \nabla_g^i),
\end{equation}
that enjoys invariance under conformal changes of metrics. Namely, for $\omega\in C^\infty(M)$, $\omega>0$, taking $\tilde{g}:=\omega^{\frac{4}{n-2k}}g$ a metric conformal to $g$, we have that
\begin{equation}\label{invar:gjms}
P_{\tilde{g}}^k\varphi=\omega^{1-\crit}P_g^k(\omega\varphi)\hbox{ for all }\varphi\in C^\infty(M).
\end{equation}
\subsection{The case of general manifolds} Testing $Pu_\epsilon$ against $u_\epsilon$, where $P$ is as in \eqref{def:P}, the conformal property \eqref{invar:gjms} yields
\begin{eqnarray}\label{eq:upu}
\int_M \ue P\ue\, dv_g &=& \int_M \hat{u}_\eps\Delta^k_{g_{x_0}}\hue\, dv_{g_{x_0}}+\sum_{i=0}^{k-1}\int_M \hat{A}^{(i)} ( \nabla^i_{g_{x_0}}\hue, \nabla^i_{g_{x_0}}\hue)\, dv_{g_{x_0}}
\end{eqnarray}
where $\hat{A}^{(i)}$ is a smooth symmetric $(0, 2i)-$tensor for all $i=0,...,k-1$ and
where, see Subsection 15.2 in \cite{robert:gjms}
\begin{eqnarray*}%\label{eq:76}
\hat{A}^{(k-1)}( \nabla^{k-1}_{g_{x_0}}\hat{u}, \nabla^{k-1}_{g_{x_0}}\hat{u})&:=&\hat{h} (\Delta_{g_{x_0}}^{\frac{k-1}{2}} \hat{u})^2-(\hat{T},\nabla_{g_{x_0}}^2\hat{u})\Delta_{g_{x_0}}^{k-2}\hat{u}+lot
\end{eqnarray*}
where $\hat{h}:=\varphi_{x_0}^{-\frac{4}{n-2k}}h $ and $\hat{T}:=\varphi_{x_0}^{-\frac{8}{n-2k}} (T-T_g)+ T_{g_{x_0}}$ and, see (2.7) in Mazumdar-Vétois \cite{mv}, 
\begin{equation}\label{def:T:g}
T_g:=\frac{k(n-2)}{4(n-1)}R_g g-\frac{2k(k-1)(k+1)}{3(n-2)}\left(\hbox{Ric}_g-\frac{R_g}{2(n-1)}g\right).
\end{equation}
We compute separately the different terms of \eqref{eq:upu}. We have that for all $X\in B_{\eps^{-1}\delta}(0)$, $\eps^{\frac{n-2k}{2}}\hue(\hbox{exp}_{x_0}^{g_{x_0}}(\eps X))=U(X)$. Note also that $\Delta_\xi^kU=U^{\crit-1}$ in $\rn$ and $U\in D_k^2(\rn)$ is an extremal for \eqref{def:K}. For convenience, we define
$$\theta_\eps:=\left\{\begin{array}{cc}
\eps^2 &\hbox{ if }n>2k+2,\\
\eps^2\ln\frac{1}{\eps} &\hbox{ if }n=2k+2.
\end{array}\right.$$
We have that $\int_M \hat{u}_\eps\Delta^k_{g_{x_0}}\hue\, dv_{g_{x_0}}= \int_{B_\delta(x_0)} \hat{u}_\eps\Delta^k_{g_{x_0}}\hue\, dv_{g_{x_0}}+O(\eps^{n-2k})$
as $\eps\to 0$. Since $\hue$ is radially symmetrical in $B_\delta(x_0)$, in the exponential chart at $x_0$, we get
$$\Delta_{g_{x_0}}\hue=-\frac{1}{r^{n-1}\sqrt{|g_{x_0}|}}\partial_r\left(r^{n-1}\sqrt{|g_{x_0}|} \partial_r\hue\right)=\Delta_\xi\hue-\frac{\partial_r \sqrt{|g_{x_0}|}}{\sqrt{|g_{x_0}|}}\partial_r\hue.$$
Taking $N$ large enough in \eqref{def:phi} and iterating this formula, we get that 
\begin{equation}\label{diff:riem:eucl}
\Delta^k_{g_{x_0}}\hue=\Delta^k_{\xi}(\hue\circ \hbox{exp}^{g_{x_0}}_{x_0})+O(\eps^{\frac{n-2k}{2}}r^{n-2k}).
\end{equation}
Therefore, with a change of variable, we get that 
\begin{equation*}
 \int_M \hat{u}_\eps\Delta^k_{g_{x_0}}\hue\, dv_{g_{x_0}} 
 = \int_{B_{\eps^{-1}\delta}( 0)} U\Delta^k_{\xi}U \, dx+O(\eps^{n-2k})=\int_{\rn} U^{\crit} \, dx+O(\eps^{n-2k}).
% \int_{B_{\eps^{-1}\delta}( 0)} U^{\crit} \, dx+O(\eps^{n-2k})\\
% &=&\int_{\rn} U^{\crit} \, dx+O(\eps^{n-2k}).
\end{equation*}
Straightforward computations yield $\int_M \hat{A}^{(i)} ( \nabla^i_{g_{x_0}}\hue, \nabla^i_{g_{x_0}}\hue)\, dv_{g_{x_0}}=o(\theta_\eps)$ when $i<k-1$. We have that
\begin{multline*}
    \int_M \hat{A}^{(k-1)} ( \nabla^{k-1}_{g_{x_0}}\hue, \nabla^{k-1}_{g_{x_0}}\hue)\, dv_{g_{x_0}}\\= \int_M \hat{h} (\Delta_{g_{x_0}}^{\frac{k-1}{2}} \hue)^2\, dv_{g_{x_0}}-\int_M (\hat{T},\nabla_{g_{x_0}}^2\hue)\Delta_{g_{x_0}}^{k-2}\hue\, dv_{g_{x_0}}+o(\theta_\eps)\end{multline*}
The computations of Subsection 15.2 in \cite{robert:gjms} yield
\begin{equation*}
\int_M \hat{A}^{(k-1)} ( \nabla^{k-1}_{g_{x_0}}\hue, \nabla^{k-1}_{g_{x_0}}\hue)\, dv_{g_{x_0}} = C_{n,k}\theta_\eps\left(h_0(x_0)+\frac{\hbox{Tr}_{x_0}(T-T_g)}{n}\right) \, dx +o(\theta_\eps).
\end{equation*}
%\begin{eqnarray*}
%&&\int_M \hat{A}^{(k-1)} ( \nabla^{k-1}_{g_{x_0}}\hue, \nabla^{k-1}_{g_{x_0}}\hue)\, dv_{g_{x_0}}\\
%&&=\eps^2\left(h_0(x_0)+\frac{\hbox{Tr}_{x_0}(T-T_g)}{n}\right)\left\{\begin{array}{cc}
%\int_{\rn} (\Delta_{\xi}^{\frac{k-1}{2}}U )^2 \, dx +o(\eps^2)&\hbox{ if }n>2k+2\\
% C_n\ln\frac{1}{\eps } +o\left(\eps^2\ln\frac{1}{\eps}\right)&\hbox{ if }n=2k+2
%\end{array}\right.
%\end{eqnarray*}
for
\[ C_{n,k} =\begin{cases}
    \int_{\rn} (\Delta_{\xi}^{\frac{k-1}{2}}U )^2 & \hbox{if } n>2k+2\\
    \omega_{n-1}\lim_{r\to +\infty}r^{n-1} (\Delta_{\xi}^{\frac{k-1}{2}}U (r))^2& \hbox{if } n=2k+2 
\end{cases}.
\]
%\begin{equation}\label{def:Cn}
%C_n:=\omega_{n-1}\lim_{r\to +\infty}r^{n-1} (\Delta_{\xi}^{\frac{k-1}{2}}U (r))^2>0\hbox{ when }n=2k+2.
%\end{equation}
We have that (see \cite{robert:gjms})
%$$\frac{\hbox{Tr}_{x_0}( T_g)}{n}=\frac{k(3n(n-2)-4(k^2-1))}{12n(n-1)}R_g(x_0),$$
%so that
$$h_0(x_0)+\frac{\hbox{Tr}_{x_0}(T-T_g)}{n}=h_0(x_0)-c_{n,k}R_g(x_0)+\frac{\hbox{Tr}_{x_0}(T)}{n}$$
where
\begin{equation}\label{def:petit:c}
c_{n,k}:=\frac{k(3n(n-2)-4(k^2-1))}{12n(n-1)}.
\end{equation}
These computations have been performed for $k>1$. The result is still valid for $k=1$ and $T\equiv 0$. Finally, putting these identities together yield
\begin{equation*}
\int_M \ue P\ue\, dv_g = \int_{\rn} U^{\crit} \, dx  \\
+\left(h_0(x_0)-c_{n,k}R_g(x_0)+\frac{\hbox{Tr}_{x_0}(T )}{n}\right)C_{n,k}\theta_\eps+o(\theta_\eps).
\end{equation*}
Independently, taking also $N>>1$, we get that
\begin{eqnarray*}
&&\int_M |\ue|^{\crit} \, dv_g =  \int_{B_\delta(x_0)} |\hue|^{\crit} \, dv_{g_{x_0}}+O(\eps^{n})=\int_{\rn} U^{\crit} \, dx+O(\eps^{n})
%&&=\int_{B_\delta(0)} |\hue\circ\hbox{exp}^{g_{x_0}}_{x_0}|^{\crit} (1+O(|x|^N))\, dx+O(\eps^{n})\\
%&&= \int_{B_{\eps^{-1}\delta(0)}} U^{\crit} \, dx+O(\eps^{n})=\int_{\rn} U^{\crit} \, dx+O(\eps^{n})
\end{eqnarray*}
Therefore, using that $U$ is an extremal for \eqref{def:K}, we get that
\begin{equation}\label{final:upu}\begin{aligned}
&\frac{\int_M \ue P\ue\, dv_g}{\left(\int_M|\ue|^{\crit} \, dv_g\right)^{\frac{2}{\crit}}} =\frac{1}{K(n,k)}+ \left(h_0(x_0)-c_{n,k}R_g(x_0)+\frac{\hbox{Tr}_{x_0}(T)}{n}\right)\frac{C_{n,k}}{\left(\int_{\rn}U^{\crit} \, dX\right)^{\frac{2}{\crit}}} \theta_\eps+o(\theta_\eps).
\end{aligned}\end{equation}
We can now conclude the proof of the nonexistence part of Theorem \ref{th:main}. Assume that there exists $B_0>0$ such that \eqref{ineq:opt} holds. Assume that there exists $x_0\in M$ such that $R_g(x_0)>0$. We define $B'_0:=K(n,k)^{-1}B_0$, so that
\begin{equation}\label{mino:K}
\frac{1}{K(n,k)}\leq \frac{\int_M  u P u\, dv_g}{\left(\int_M|u|^{\crit} \, dv_g\right)^{\frac{2}{\crit}}} \hbox{ for all }u\in H_k^2(M)-\{0\}\hbox{ where } P:=\Delta_g^k+B'_0
\end{equation}
We write $P$ of the form \eqref{def:P} with $T\equiv 0$ and $h\equiv 0$ when $k>1$. 
%so that [xxx]$k>2$ et dire que pour k=1 c'est different[xxx]
%$$T\equiv 0\hbox{ and }\left\{\begin{array}{cc}
%h\equiv B'_0&\hbox{ if }k=1\\
%h\equiv 0&\hbox{ if }k>1
%\end{array}\right.$$
Therefore, the test-function estimate \eqref{final:upu} yield for $n\geq 2k+2$
\begin{equation}\label{exp:2}\frac{\int_M \ue P\ue\, dv_g}{\left(\int_M|\ue|^{\crit} \, dv_g\right)^{\frac{2}{\crit}}} =\frac{1}{K(n,k)}-D_{n,k}c_{n,k}R_g(x_0)\theta_\eps+o(\theta_\eps)\hbox{ when }k>1.
\end{equation}
%\begin{eqnarray}
%&&\frac{\int_M \ue P\ue\, dv_g}{\left(\int_M|\ue|^{\crit} \, dv_g\right)^{\frac{2}{\crit}}} =\frac{1}{K(n,k)}+D_{n,k}\left\{\begin{array}{cc}
%B'_0-c_{n,k}R_g(x_0)&\hbox{ if }k=1\\
%-c_{n,k}R_g(x_0)&\hbox{ if }k>1
%\end{array}\right\}\theta_\eps+o(\theta_\eps).
%\end{eqnarray}
Since $R_g(x_0)>0$, we then get a contradiction with \eqref{mino:K} when $n\geq 2k+2$ and $k>1$. Then, \eqref{ineq:opt} cannot hold. 
%Note that when $k=1$, the operator $P$ is written with $T\equiv 0$ and $h\equiv B'_0$, which changes the expansion.
\subsection{The case of Ricci-flat manifolds}We now assume that $k\geq 3$, $n\geq 2k+4$, and $(M,g)$ is Ricci-flat but non flat, that is there exists $x_0\in M$ such that $Rm_g(x_0)\neq 0$. In particular, the Weyl tensor $W_g$ does not vanish at $x_0$. 
%Assume that there exists $B_0'>0$ such that \eqref{ineq:opt} holds, that is 
%\[  \frac{\int_M u (\Dg^k u + B_0' u)\, dv_g}{\left(\int_M |u|^{\crit}\, dv_g\right)^{\frac 2\crit}} \geq \frac1{K(n,k)} \quad \text{for all } u \in H_k^2 \setminus \{0\}.
%\]
Since $Ric_g \equiv 0$, then $(M,g)$ is Einstein and $P_g^k = \Dg^k$ (see Fefferman-Graham \cite{fefgram}). The conformal change of metric $g_{x_0} = \varphi_{x_0}^{4/(n-2k)}g$ yields
\[  \int_M \ue (\Dg^k \ue + B_0' \ue)\,dv_g = \int_M \hue P_{g_{x_0}}^k \hue\, dv_{g_{x_0}}+ B_0' \int_M \varphi_{x_0}^{2-\crit}\hue^2 \, dv_{g_{x_0}}.
\]
A direct application of Proposition 2.1 in \cite{mv} yields $C=C(n,k)>0$ such that
\begin{equation}\label{eq:compMV}
    \frac{\int_M \hue P_{g_{x_0}}^k \hue\, dv_{g_{x_0}}}{\left(\int_M|\hue|^\crit\,dv_{g_{x_0}}\right)^{\frac2\crit}} = \frac 1{K(n,k)} \left(1 - C\epsilon^4 \begin{cases}
    |W_{g_{x_0}}(x_0)|^2 \ln\frac1\epsilon + O(1) & \text{if } n=2k+4\\
    |W_{g_{x_0}}(x_0)|^2 + o(1) & \text{if } n>2k+4
\end{cases}\right)
\end{equation}
as $\epsilon\to 0$. Using the fact that $W_g$ is conformally invariant, we get that $|W_{g_{x_0}}(x_0)|\neq 0$ by hypothesis. Straightforward computations now give 
\begin{equation}\label{eq:estimhue}  
         \int_M \varphi_{x_0}^{2-\crit} \hue^2\, dv_{g_{x_0}} = \int_{B_\delta(0)} (\hue\circ\exp_{x_0}^{g_{x_0}})^2 (1+O(|x|^2)) dx + O(\epsilon^{n-2k})     = o(\epsilon^4)
 \end{equation}
since $k\geq 3$ and $n\geq 2k+4$. Therefore, putting \eqref{eq:compMV} and \eqref{eq:estimhue} together, for any $B_0'\in\rr$, the test-function estimate becomes 
\begin{multline*}
    \frac{\int_M u_\eps (\Dg^k u_\eps + B_0' u_\eps)\, dv_g}{\left(\int_M |u_\eps|^{\crit}\, dv_g\right)^{\frac 2\crit}} = \frac 1{K(n,k)}\\
   \Bigg(1 - C\epsilon^4   \times\begin{cases}
        |W_{g_{x_0}}(x_0)|^2 \ln\frac1\epsilon + O(1) & \text{if } n=2k+4\\
        |W_{g_{x_0}}(x_0)|^2 + o(1) & \text{if } n>2k+4
    \end{cases}\Bigg)
\end{multline*}
which gives the contradiction as in the preceding case. Therefore \eqref{ineq:opt} does not hold when $(M,g)$ is Ricci-flat, but non flat, and $n\geq 2k+4$, $k\geq 3$.

\medskip\noindent The non-validity assertions of Theorems \ref{th:ok:Iopt:bis} and  \ref{th:ok:noIopt:bis} are consequences of these test-functions estimates.

\section{Proof of Theorem \ref{th:bndua:der} assuming Theorem \ref{prop:unicC0} }\label{sec:part4}
We adopt the notations of Theorem \ref{th:bndua:der}.  Let $(u_\a)_\a \in C^{2k}(M)$ be a sequence of  solutions $u_\a \not\equiv 0$ to \eqref{eq:ua:th22} satisfying \eqref{bnd:nrj}.
%We adopt the notations of Theorem \ref{th:bndua:der}.  Let $(u_\a)_\a \in C^{2k}(M)$ be a sequence of  solutions $u_\a \not\equiv 0$ to 
%  \begin{equation}\label{eq:ua:th22} \Dg^k u_\a + \a^{2k} u_\a = |u_\a|^{\crit-2} u_\a \qquad \text{in } M, \quad \forall~\a >0,
%  \end{equation}
%    satisfying 
%\begin{equation}\label{bnd:nrj}
% \int_M{|u_\a|^\crit} \leq K(n,k)^{-\frac{n}{2k}}+o(1).
% \end{equation} 
 \begin{lemma}
    There exists $x_\a \in M$ and $\mu_\a >0$ such that $\a \mu_\a \to 0$ as $\a \to \infty$, and 
    \[  \mu_\a^{\frac{n-2k}{2}} u_\a(\exp_{x_\a}^g(\mu_\a \cdot)) \to \eBub \qquad \text{in } C^{2k}_{loc}(\rr^n).
        \]
    Moreover, $\lim_{\a \to \infty} \Lnorm[(M)]{\crit}{u_\a}^\crit = K(n,k)^{-\frac{n}{2k}}$.
\end{lemma}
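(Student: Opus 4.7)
\medskip\noindent\textbf{Plan.} My approach is a classical concentration-compactness analysis at the maximum of $|u_\a|$, combined with the $\varepsilon$-optimal Sobolev inequality \eqref{eq:epsineq} and Swanson's classification of Sobolev extremals.

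First, I would prove the energy equality $\|u_\a\|_{\crit}^{\crit} \to K(n,k)^{-n/(2k)}$ and the blow-up $\|u_\a\|_\infty \to +\infty$. Testing \eqref{eq:ua:th22} against $u_\a$ yields
\[ \int_M(\Dg^{k/2}u_\a)^2\,dv_g + \a^{2k}\int_M u_\a^2\,dv_g = \int_M|u_\a|^\crit\,dv_g.\]
Plugging this identity into \eqref{eq:epsineq} with $\varepsilon$ small fixed and $\a$ large enough that $\a^{2k}(K(n,k)+\varepsilon) \geq B_\varepsilon$, one gets $\int_M|u_\a|^\crit \geq (K(n,k)+\varepsilon)^{-n/(2k)}$; together with \eqref{bnd:nrj} and $\varepsilon\to 0$, this gives the energy claim. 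The same identity forces $\|u_\a\|_2^2 = O(\a^{-2k})$, so boundedness of $\|u_\a\|_\infty$ would force $\|u_\a\|_\crit \to 0$ by interpolation, contradicting the lower bound.

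Next, pick $x_\a \in M$ with $u_\a(x_\a) = \|u_\a\|_\infty$ (changing $u_\a$ to $-u_\a$ if needed), set $\mu_\a := u_\a(x_\a)^{-2/(n-2k)} \to 0$, and rescale $\hat u_\a(X) := \mu_\a^{(n-2k)/2}u_\a(\exp_{x_\a}^g(\mu_\a X))$ on $B_{\delta/\mu_\a}(0)$. Then $\hat u_\a(0) = 1 = \|\hat u_\a\|_\infty$ and $\Delta_{\hat g_\a}^k \hat u_\a + (\a\mu_\a)^{2k}\hat u_\a = |\hat u_\a|^{\crit-2}\hat u_\a$, with $\hat g_\a \to \xi$ in $C^\infty_{loc}$. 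Up to a subsequence $\a\mu_\a \to c \in [0,+\infty]$. If $c<+\infty$, standard $L^p$-elliptic regularity yields $\hat u_\a \to U_\infty$ in $C^{2k}_{loc}(\rn)$ with $U_\infty \in D_k^2(\rn)$ solving $\Delta_\xi^k U_\infty + c^{2k}U_\infty = |U_\infty|^{\crit-2}U_\infty$ and $U_\infty(0)=1$; testing against $U_\infty$ and combining with the sharp Sobolev inequality \eqref{def:K} and the Fatou bound $\int|U_\infty|^\crit \leq K(n,k)^{-n/(2k)}$ forces both $\int|U_\infty|^\crit = K(n,k)^{-n/(2k)}$ and $c^{2k}\int U_\infty^2 = 0$, whence $c=0$. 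If instead $c=+\infty$, I would rescale on the operator-natural scale $\a^{-1}$: setting $\check u_\a(Y) := \a^{-(n-2k)/2}u_\a(\exp_{x_\a}^g(Y/\a))$, the rescaled equation has bounded coefficients and $\|\check u_\a\|_\infty = (\a\mu_\a)^{-(n-2k)/2} \to 0$. Normalizing $w_\a := \check u_\a/\|\check u_\a\|_\infty$, one has $w_\a(0)=1=\|w_\a\|_\infty$, and since the nonlinear term $(\|\check u_\a\|_\infty)^{\crit-2}|w_\a|^{\crit-2}w_\a$ vanishes uniformly, elliptic regularity gives a bounded limit $w_\infty$ solving $\Delta_\xi^k w_\infty + w_\infty = 0$ on $\rn$; a Fourier transform in tempered distributions forces $w_\infty \equiv 0$, contradicting $w_\infty(0)=1$.

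With $c=0$, the limit $U_\infty$ solves $\Delta_\xi^k U_\infty = |U_\infty|^{\crit-2}U_\infty$ and achieves $\int|U_\infty|^\crit = K(n,k)^{-n/(2k)}$, so $U_\infty$ is a Sobolev extremal; by Swanson's classification, up to a global sign $U_\infty$ is positive and of the form $U_\infty = c\,\eBub(\lambda(\cdot-X_0))$, and the normalizations $U_\infty(0)=\|U_\infty\|_\infty=1$ combined with the equation $\Delta_\xi^k U_\infty = U_\infty^{\crit-1}$ force $c=\lambda=1$ and $X_0=0$, yielding $U_\infty = \eBub$. Uniqueness of the limit upgrades subsequential convergence to full convergence in $C^{2k}_{loc}(\rn)$. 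The main technical obstacle I expect is the case $\a\mu_\a \to +\infty$: the mass coefficient in the $\mu_\a$-rescaled equation blows up and destroys direct elliptic control on $\hat u_\a$, which forces the alternative rescaling on the operator-natural scale $\a^{-1}$ and a Liouville-type argument; this reflects the diverging-coefficient viewpoint emphasized in the introduction and exploited in the Green's function analysis.
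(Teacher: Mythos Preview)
Your proof is correct, but it takes a more circuitous route than the paper's. The paper dispatches the possibility $\a\mu_\a \to +\infty$ in one line via H\"older: from the energy identity and $\|u_\a\|_\crit^\crit \leq \|u_\a\|_2^2\,\|u_\a\|_\infty^{\crit-2}$ one gets $\|u_\a\|_\crit^\crit \leq (\a\mu_\a)^{-2k}\|u_\a\|_\crit^\crit$, hence $\a\mu_\a \leq 1$ immediately. This makes your separate rescaling at scale $\a^{-1}$ and the Liouville argument for $\Delta_\xi^k w + w = 0$ unnecessary. For the step $\a\mu_\a \to 0$, the paper first proves the stronger fact $\a^{2k}\int_M u_\a^2 \to 0$ (via \eqref{eq:epsineq} and weak convergence $u_\a \rightharpoonup 0$ in $H_k^2$) and then reads off $\a\mu_\a \to 0$ by Fatou on the rescaled $L^2$ integral; you instead test the limit equation against $U_\infty$ and use energy saturation, which is equally valid but implicitly relies on $U_\infty \in L^2(\rn)$ when $c>0$ --- a point you should make explicit (it follows from Fatou applied to $(\a\mu_\a)^{2k}\int_{B_R}\hat u_\a^2 \leq \a^{2k}\int_M u_\a^2 \leq C$). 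Your approach is more self-contained at the limit level, while the paper's yields the side information $\a^{2k}\|u_\a\|_2^2 \to 0$ along the way and avoids the extra case analysis.
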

\begin{proof}
    Since $u_\a \not \equiv 0$, we let $\bar{x}_\a\in M$ be the point where $|u_\a|$ reaches its maximum, and  $ \bmu_\a := |u_\a(\bar{x}_\a)|^{-\frac{2}{n-2k}}$. Testing \eqref{eq:ua:th22} against $u_\a$, we obtain
    \begin{equation}\label{eq:testua}
        \intM{\bpr{(\Dg^{k/2}u_\a)^2 + \a^{2k} \ua^2}} = \intM{|u_\a|^{\crit}} \leq K(n,k)^{-\frac{n}{2k}}+o(1),
        \end{equation}
    so that by Hölder's inequality,
    \[  \Lnorm[(M)]{\crit}{u_\a}^{\crit} \leq \Lnorm[(M)]{2}{u_\a}^2 \Lnorm[(M)]{\infty}{u_\a}^{\crit-2} \leq \frac{\Lnorm[(M)]{\crit}{u_\a}^{\crit}}{\a^{2k}\bmu_\a^{2k}}.
        \]
    This shows that $\a\bmu_\a$ remains bounded as $\a \to \infty$. Let us define 
    \[  v_\a(y) := \bmu_\a^{\frac{n-2k}{2}} u_\a(\exp_{\bar{x}_\a}^g(\bmu_\a y)) \qquad \forall~y \in \Bal{0}{\delta/\bmu_\a} \sub \rr^n,
        \]
    and we set $\Tg_\a := \exp_{\bar{x}_\a}^* g(\bmu_\a \cdot\,)$. Observe that $|v_\a| \leq 1$ uniformly in $\a$, and   $v_\a$ solves the equation $ (\Dg[\Tg_\a]^k + \a^{2k} \bmu_\a^{2k})v_\a = |v_\a|^{\crit-2} v_\a $ in $ \Bal{0}{\delta/\bmu_\a}$. Thus, since $\Tg_\a \to \xi$ the Euclidean metric in $C^\infty_{loc}(\rr^n)$, $(\Dg[\Tg_\a]^k + \a^{2k}\bmu_\a^{2k})$ is an elliptic operator with bounded coefficients. By standard elliptic theory (see Agmon-Douglis-Nirenberg \cite{ADN} or the Appendix in \cite{robert:gjms}) there exists $v_\infty \in C^{2k}(\rr^n)$ such that $v_\a \to v_\infty$ in $C^{2k}_{loc}(\rr^n)$. Then we have $|v_\infty(0)| =1$, and $v_\infty \not\equiv 0$. Thanks to \eqref{eq:testua}, we also see that $u_\a \to 0$ in $L^2(M)$, so that $u_\a\rightharpoonup 0$ in $\Sob(M)$. Using \eqref{eq:epsineq}, we then obtain that for all $\epsilon >0$, 
    
%    \begin{equation}\label{tmp:bnduadeu}\begin{aligned}  
%        \Lnorm[(M)]{\crit}{u_\a}^2 &\leq (K(n,k)+\epsilon) \Lnorm[(M)]{2}{\Dg^{k/2}u_\a}^2 + B_\epsilon \Lnorm[(M)]{2}{u_\a}^2\\
%            &\leq (K(n,k) + \epsilon)(K(n,k)^{-\frac{n}{2k}} + o(1)) + o(1)\\
%            &\leq K(n,k)^{-\frac{n-2k}{2k}} + C\epsilon.
%    \end{aligned}\end{equation}
%    It then follows that for all $\epsilon>0$,
    \begin{align*}
        \intM{\bpr{(\Dg^{k/2}u_\a)^2 + \a^{2k} \ua^2}} &\leq (K(n,k)^{-\frac{n}{2k}}+o(1))^{1-\frac{2}{\crit}} \bpr{\intM{|u_\a|^\crit}}^{\frac{2}{\crit}}\\
            &\leq K(n,k)^{-1}(K(n,k) + \epsilon)\int_M(\Delta_g^{k/2}u_\alpha)^2\, dv_g +o(1),
    \end{align*}
    so that $\lim_{\a\to \infty} \a^{2k}\intM{u_\a^2} = 0$. Finally, we see that for $R>0$,
    \begin{eqnarray*}
        \a^{2k}\bmu_\a^{2k}\bpr{\int_{\Bal{0}{R}} v_\infty^2 \, dy +o(1)}= \a^{2k}\intM[\Bal{\bar{x}_\a}{R\bmu_\a}]{u_\a^2} \leq o(1).
    \end{eqnarray*}
    Since $v_\infty \not\equiv 0$, we have that $\a\bmu_\a \to 0$. Therefore, $v_\infty \in \hSob(\rn)$ solves the limiting equation  $ \Dg[\xi]^k v_\infty = |v_\infty|^{\crit-2} v_\infty $  in $ \rr^n$, and thus $\Lnorm[(\rr^n)]{\crit}{v_\infty}^{\crit} \geq K(n,k)^{-\frac{n}{2k}}$. We now compute, for all $R>0$,
    \[  \intM{|u_\a|^{\crit}} \geq \intM[\Bal{\bar{x}_\a}{R\bmu_\a}]{|u_\a|^\crit} = \int_{\Bal{0}{R}} |v_\infty|^\crit dy + o(1),
        \]
    so that together with \eqref{bnd:nrj}, we have $\lim_{\a \to \infty} \Lnorm[(M)]{\crit}{u_\a}^\crit = K(n,k)^{-\frac{n}{2k}}$.     This also implies that $\Lnorm[(\rr^n)]{\crit}{v_\infty}^\crit = K(n,k)^{-\frac{n}{2k}}$. Therefore $v_\infty$ is an extremal for the Sobolev inequality \eqref{def:K}. Thus, by Swanson \cite{swanson} and Wei-Xu \cite{WeiXu99},  we conclude that $v_\infty = \eBub$ the positive Euclidean bubble. 
\end{proof}
\begin{proof}[Proof of Theorem \ref{th:bndua:der}] Arguing as in Carletti \cite{Car24}, we get that
%
%    Using the local convergence of $v_\a$ to $\eBub$, we obtain that for all $R>0$,
%    \begin{align*}  
%        \lim_{\a\to \infty} \intM[M\setminus \Bal{\bar{x}_\a}{R\bmu_\a}]{|u_\a|^\crit} &= \lim_{\a\to \infty} \intM{|u_\a|^\crit} - \lim_{\a\to \infty} \intM[\Bal{\bar{x}_\a}{R\bmu_\a}]{|u_\a|^\crit}\\
%            &= K(n,k)^{-\frac{n}{2k}} - \int_{\Bal{0}{R}} \eBub^\crit \, dy = \epsilon(R),
%        \end{align*}
%    where $\epsilon(R) \to 0$ as $R\to \infty$. Therefore, writing $\bar{\nu}_\a := (\bar{x}_\a,\bmu_\a)$, we have that for all $R>0$,
%    \begin{multline*}
%        \lim_{\a\to \infty} \intM{|u_\a - V_{\alpha,\bar{\nu}_\alpha}|^\crit}\\
%        \begin{aligned}
%            &= \lim_{\a\to \infty}\bpr{\intM[\Bal{\bar{x}_\a}{R\bmu_\a}]{|u_\a - V_{\alpha,\bar{\nu}_\alpha}|^\crit} + \intM[M\setminus \Bal{\bar{x}_\a}{R\bmu_\a}]{|u_\a-V_{\alpha,\bar{\nu}_\alpha}|^\crit}}\\
%            &= \lim_{\a\to \infty} \intMg{\Tg_\a}{\Bal{0}{R}}{\abs{\bmu_\a^{\frac{n-2k}{2}}u_\a(\exp_{\bar{x}_\a}(\bmu_\a \cdot\,)) - \eBub}^\crit} + \epsilon(R)= \epsilon(R),
%        \end{aligned} 
%    \end{multline*}
%    where $\TBuba$ is as defined in \eqref{def:TBub}, and writing $\Tg_\a = \exp_{\bar{x}_\a}^*g (\bmu_\a \cdot\,)$ as before. Letting $R\to \infty$, we have showed that 
$u_\a - V_{\alpha,\bar{\nu}_\alpha} \to 0 $ in $L^\crit(M)$, and thus$\Snorm{k}{u_\a-V_{\alpha,\bar{\nu}_\alpha}}^2 \leq \intM{(\Dg^k + \a^{2k})(u_\a-V_{\alpha,\bar{\nu}_\alpha})\, (u_\a - V_{\alpha,\bar{\nu}_\alpha})} =o(1)$. Define $\nu_\alpha:=(x_\a, \mu_\a)$ be such that $\Snorm{k}{u_\a - V_{\alpha, \nu_\alpha}}=\min \Snorm{k}{u_\a - \TBub}$ where the infimum is taken for $\pa = (z,\mu) \in \parama[\a\mu_\a]$.
%    as the solutions to the minimizing problem 
%    \[  minimize~\Snorm{k}{u_\a - \TBub} \qquad \text{for $\pa = (z,\mu) \in \parama[\a\mu_\a]$}.
%        \]
    We then have as in \cite[Lemma 2.9]{Car24} that $\frac{\mu_\a}{\bmu_\a} \to 1$ and  $ \frac{\dg{x_\alpha,\bar{x}_\a}}{\mu_\a} \to 0$ as $\a \to \infty$.  Moreover, $u_\a - V_{\alpha,\nu_\alpha} \to 0$ in $\Sob(M)$, and $u_\a - V_{\alpha,\nu_\alpha}  \in \Kera[\nu_\alpha]^\perp$ defined in \eqref{def:ker} below. By Theorem \ref{prop:unicC0}, there is a unique function $\Tp_\a \in \Kera[\nu_\alpha]^\perp$ satisfying $|\Tp_\a(x)| \leq \Lambda\, \Teta[\bpa_\a](x) \Bub_{x_\alpha,\mu_\a}(x)$ for all $x\in M$, which is solution to \eqref{eq:modcritC0}. Writing $\phi_\a := u_\a - \TBuba[\nu_\alpha]$, it follows from \eqref{eq:ua:th22} that $\phi_\a\in \Kera[\nu_\alpha]^\perp$ solves 
    \[  \proKa[\nu_\alpha]\bpr{\TBuba[\nu_\alpha] + \phi_\a - (\Dg^k +\a^{2k})^{-1} \bpr{|\TBuba[\nu_\alpha] + \phi_\a|^{\crit-2}(\TBuba[\nu_\alpha] + \phi_\a)}} = 0.
        \] 
    We have $\Snorm{k}{\phi_\a} \to 0$ and $\Vert\Tp_\a\Vert_{H_k^2(M)} \to 0$ as $\a \to \infty$, so that the uniqueness of Proposition \ref{prop:unicHk}  allows to conclude that $\phi_\a = \Tp_\a$ for all $\a>0$ such that $\a\mu_\a < \rho_0$ and $\Snorm{k}{\phi_\a}< R_0$ and $\Vert\Tp_\a\Vert_{H_k^2(M)} <R_0$. Finally, for any $\tau\in (0,2]$ such that $0<\tau\leq \min\left\{2,\frac{n-2k}{2}\right\}$, noting that $\tau<2k$ since $k>1$, estimate \eqref{bnd:phi} of Theorem \ref{prop:unicC0} yields $u_\a = \TBuba[\nu_\alpha] + \Tp_\a$ satisfies
    \begin{align}\label{ineq:inter}  
        |\nabla_g^l \left(u_\a -\TBuba[\nu_\alpha]\right)(x)| &\leq C\frac{(\alpha(\ma+d_g(\xa, x)))^\tau}{(\mu_\a+ \dg{x_\a,x})^{l}} \Bub_{x_\alpha,\mu_\alpha}(x)
        \end{align}
    for all $x\in M$, and  $l=0,\ldots, 2k-1$, which proves Theorem \ref{th:bndua:der}. 
    \end{proof}
As already mentioned, Theorem \ref{th:bndua} is a consequence of Theorem \ref{th:bndua:der}. Just note that the upper bound \eqref{eq:estimbndua} is a consequence of \eqref{ineq:inter} and the explicit expression of $\TBuba$.

\section{Theorems \ref{th:ok:Iopt:bis} and  \ref{th:ok:noIopt:bis}: Proof of the validity part via blow-up analysis}\label{sec:blowup}
%The goal is to obtain a contradiction with the existence of the blowing-up sequence $u_\a$ as $\a \to \infty$. This is achieved through a Pohozaev identity. The higher order term will bring up a term $R_g(x_\a)\mu_\a^2$, while the lower order term $\int_M u_\a^2$ is estimated in \eqref{est:udeux} and behaves as $-(\a\mu_\a)^{2k}$ up to a multiplicative constant when $n>4k$. This will allow to reach a contradiction and conclude to the validity of \eqref{ineq:opt} in all the cases considered in Theorems \ref{th:ok:Iopt:bis} and  \ref{th:ok:noIopt:bis}.
%
We argue by contradiction and we assume that \eqref{ineq:opt} does not hold and that $k>1$. Then for all $\alpha>0$, there exists $w\in H_k^2(M)-\{0\}$ such that 
\begin{equation*}
J_\alpha(w):= \frac{\int_M(\Delta_g^{k/2}w)^2\, dv_g +\alpha^{2k}\int_Mw^2\, dv_g}{\left(\int_M|w|^{\crit}\, dv_g\right)^{\frac{2}{\crit}}} <\frac{1}{K(n,k)}.
\end{equation*}
It follows from Mazumdar \cite{mazumdar:jde} that there exists $\ua\in C^{2k}(M)$ such that 
$$J_\alpha(\ua)=\inf\{J_\alpha(u)/\, u\in H_k^2(M)-\{0\}\}:=\lambda_\alpha<\frac{1}{K(n,k)}.$$
Up to multiplying $\ua$ by a suitable constant, we can assume that 
\begin{equation}\label{eq:ua}
\Delta_g^k\ua+\alpha^{2k}\ua=|\ua|^{\crit-2}\ua\hbox{ in }M
\end{equation}
so that  $ \int_M|\ua|^{\crit}\, dv_g = \lambda_\alpha^{\frac{n}{2k}}<\frac{1}{K(n,k)^{\frac{n}{2k}}}.$

The goal is to obtain a contradiction with the existence of this sequence $u_\a$ as $\a\to \infty$. This is achieved through a Pohozaev inequality, where each term is estimated precisely. The imbalance between each terms' rate of decay will allow to reach the contradiction and conclude to the validity of \eqref{ineq:opt} in all the cases considered in Theorems \ref{th:ok:Iopt:bis} and \ref{th:ok:noIopt:bis}.

\medskip\noindent   Theorem  \ref{th:bndua:der} yields $(\xa)_\alpha\in M$ and $(\ma)_{\alpha>0}\in \rr_{>0}$ such that 
\begin{equation}\label{lim:hua}
\lim_{\alpha\to +\infty}\ma=0\, ;\, \lim_{\alpha\to +\infty}\alpha \ma =0\, ;\, \lim_{\alpha\to +\infty}\ma^{\frac{n-2k}{2}}\ua(\hbox{exp}_{\xa}(\ma\cdot))=U\hbox{ in }C^{2k}_{loc}(\rn)
\end{equation}
where $U$ is as in \eqref{def:U}. %Moreover, for all $q>1$, there exists $C_q>0$ such that for all $l=0,...,2k-1$, we have that
%\begin{equation}\label{ineq:cl}
%|\nabla_g^l\ua(x)|\leq \frac{C_q}{ \big(1+ \alpha d_g(x,\xa)\big)^{q}}\cdot\frac{\ma^{\frac{n-2k}{2}}}{(\ma+d_g(x,\xa))^{n-2k+l}}\hbox{ for all }x\in M\hbox{ and }\alpha>0.
%\end{equation}
As one checks, we have that $\lim_{\alpha\to +\infty}\xa=x_\infty\in M$. We use the notations of \cite{robert:gjms} and of \eqref{def:phi} of Section \ref{sec:test} above. We set $\phia:=\varphi_{\xa}\circ \hbox{exp}^{g_{x_\alpha}}_{\xa}$ and $g_\alpha:= (\hbox{exp}^{g_{x_\alpha}}_{\xa})^\star g_{\xa}$.  We rewrite equation \eqref{eq:ua} with the GJMS operator \eqref{def:gjms} as
\begin{equation*}
P_g^k\ua+\sum_{i=1}^{k-1} (-1)^i\nabla_g^i((-A_{g}^{(i)}) \nabla_g^i\ua)+ (\alpha^{2k}-A_g^{(0)})\ua=|\ua|^{\crit-2 }\ua\hbox{ in }M.
\end{equation*}
We define 
\begin{equation}\label{def:hua}
\hua:=(\varphi_{x_\alpha}^{-1}\ua)\circ \tilde{\hbox{exp}}_{\xa}=\phia^{-1}\cdot \ua\circ \tilde{\hbox{exp}}_{\xa} : B_\delta(0)\subset\rn\to \rr.
\end{equation}
Using the conformal properties \eqref{invar:gjms} of the GJMS operator, we get that $P_{g_{\xa}}(\varphi_{\xa}^{-1}\ua)=\varphi_{\xa}^{-(\crit-1)}P_g^k\ua$. Plugging this expression in \eqref{eq:ua} yields
\begin{equation}\label{eq:hua}
\Delta_{\ga}^k\hua+\sum_{i=0}^{k-1} (-1)^i\nabla^i_{\ga}(\hat{A}_\alpha^{(i)}  \nabla^i_{\ga}\hua)+\alpha^{2k}\phia^{2-\crit}\hua= |\hua|^{\crit-2 }\hua\hbox{ in }B_\delta(0).
\end{equation}
where $\hat{A}^{(i)}_\alpha$ is a smooth symmetric $(0, 2i)-$tensor for all $i=0,...,k-1$ and there exists $\Vert \hat{A}^{(i)}_\alpha\Vert_{C^{i,\theta}}\leq C$ for all $\alpha$. 
%Moreover, see Subsection 15.2 in \cite{robert:gjms}
%\begin{eqnarray}\label{eq:76bis}
%(-1)^{k-1}\nabla^{k-1}_{\ga}(\hat{A}_\alpha^{(k-1)}  \nabla^{k-1}_{\ga}\hua)&=& -\Delta_{\ga}^{k-2}(\hat{T}_\alpha,\nabla_{\ga}^2\hua)+lot
%\end{eqnarray}
Arguing as in \cite{robert:gjms}, Section 15.2, and using formula (2.7) in \cite{mv}, we get that 
\begin{eqnarray}\label{eq:76bis}%{eq:lotinAk-1}
    (-1)^{k-1}\nabla^{k-1}_{\ga}(\hat{A}_\alpha^{(k-1)}  \nabla^{k-1}_{\ga}\hat{u}_\a)&=& -\Delta_{\ga}^{k-2}(\hat{T}_\alpha,\nabla_{\ga}^2 \hat{u}_\a) - \D_{\ga}^{k-3}(\hat{T}^{(3)}_\a, \nabla_{\ga}^3 \hat{u}_\a)\nonumber\\
        && + O\left(|x||\nabla^{2k-3} \hat{u}_\a| + \sum_{m=0}^{2k-4} |\nabla^m \hat{u}_\a|\right).
    \end{eqnarray}
    where
    \begin{equation}\label{def:hat:T}
\hat{T}_\alpha:=\phia ^{-\frac{8}{n-2k}} (\tilde{\hbox{exp}}_{\xa}^{g_{\xa}})^\star(-T_g)+ (\tilde{\hbox{exp}}_{\xa}^{g_{\xa}})^\star T_{\tga}\hbox{ when }k>1
\end{equation}
and $T_g$ is as in \eqref{def:T:g}. In the expression above, $\hat{T}^{(3)}_\a$ is a smooth symmetric $(0,3)$-tensor. The Pohozaev-Pucci-Serrin identity (as stated for instance in \cite{robert:gjms}) reads  
\begin{multline*}
\int_{B_\delta(0)}\left(\Delta_\xi^k \hua- |\hua|^{\crit-2}\hua\right) T(\hua)\, dx\\
=   \int_{\partial B_\delta(0)}\left((x,\nu)\left(\frac{(\Delta_\xi^\frac{k}{2}\hua)^2}{2}-\frac{|\hua|^{\crit }}{\crit }\right)+S(\hua)\right)\, d\sigma
\end{multline*}
where $T(\hua):=\frac{n-2k}{2}u+x^i\partial_i u$ and
\begin{eqnarray*}
S(\hua)&:=&\sum_{i=0}^{E(k/2)-1}\left(-\partial_\nu\Delta_\xi^{k-i-1}\hua\Delta_\xi^iT(\hua)+\Delta_\xi^{k-i-1}\hua\partial_\nu\Delta_\xi^iT(\hua)\right)\\
&&-{\bf 1}_{\{k\hbox{ odd}\}}\partial_\nu(\Delta_\xi^\frac{k-1}{2} \hua)\Delta_\xi^\frac{k-1}{2}T(\hua)
\end{eqnarray*}
Using \eqref{eq:hua}, we get $III_\alpha-II_\alpha= IV_\alpha$ where
\begin{equation*}%\label{def:IIalpha}
II_\alpha:=\int_{B_{\delta}(0)}\left(\sum_{i=0}^{k-1} (-1)^i\nabla^i_{\ga}(\hat{A}_\alpha^{(i)}  \nabla^i_{\ga}\hua) \right) T(\hua)\, dx +\alpha^{2k}\int_{B_{\delta}(0)}   \phia^{2-\crit}   \hua  T(\hua)\, dx
\end{equation*}
\begin{equation*}%\label{def:IIIalpha}
III_\alpha:=\int_{B_{\delta}(0)}\left(\Delta_\xi^k\hua  -\Delta_{\ga}^k  \hua\right) T(\hua)\, dx
\end{equation*}
\begin{equation*}%\label{def:IValpha}
IV_\alpha:=\int_{\partial B_{\delta}(0)}\left((x,\nu)\left(\frac{(\Delta_\xi^\frac{k}{2}\hua)^2}{2}-\frac{ |\hua|^{\crit }}{\crit }\right)+S(\hua)\right)\, d\sigma
\end{equation*}
We estimate these terms separately. First note that for $l=0,...,2k-1$, using the explicit expression of $\TBuba$, the pointwise control \eqref{eq:estimbndua:der} rewrites
\begin{equation}\label{ineq:59}
|\nabla^l_\xi\hua(x)|\leq \frac{C_q}{ \big(1+ \alpha |x|\big)^{q}}\cdot \frac{\ma^{\frac{n-2k}{2}}}{\left(\ma+|x|\right)^{n-2k+l}}\hbox{ for all }x\in B_{\delta}(0).
\end{equation}
%Using \eqref{ineq:59}, we get that
%\begin{equation}\label{ineq:60}
%\int_{B_{\da}(0)}|x|^t|\nabla^l \hua|\left(|x|\cdot|\nabla \hua|+|\hua|\right)\, dx\leq C\left\{\begin{array}{cc}
%\ma^{t+2k-l} &\hbox{ if }n-4k+l>t\\
%\ma^{n-2k}\ln\frac{1}{\ma} &\hbox{ if }n-4k+l=t\\
% \ma^{n-2k}  &\hbox{ if }n-4k+l<t
%\end{array}\right.
%\end{equation}
We define $\va(x):=\ma^{\frac{n-2k}{2}}\hua( \ma x)=\ma^{\frac{n-2k}{2}}\phia^{-1}(\ma x)\cdot\ua\circ  \hbox{exp}^{g_\alpha}_{\xa}( \ma x))$ for all $x\in B_{\delta/\ma}(0)$. Since $\phia(0)=1$, \eqref{lim:hua} writes
\begin{equation}\label{lim:tua:c2k:bis}
\lim_{\alpha\to +\infty}\va=U\hbox{ in }C^{2k}_{loc}(\rn).
\end{equation}

\subsection{Estimates of $III_\a$ and $IV_\a$.}
% In the sequel, we are taking $\tau\in \rr$ such that 
%\begin{equation}\label{choice:tau}
%\tau\in (0,2],\, \tau<2k\hbox{ and }\tau \leq \frac{n-2k}{2}.
%\end{equation}
Using the $C^{2k-1}-$pointwise control of Theorem \ref{th:bndua} for $q$ large enough, we get that $ IV_\alpha=O\left(\frac{\ma^{n-2k}}{\alpha^{2q}}\right)$. Note that $u_\a-\TBuba=\phi_\a$. We define
\[\begin{array}{ccc}
    \hat{V}_{\a} := (\varphi_{x_\a}^{-1} \TBuba)\circ\hbox{exp}^{g_{x_\alpha}}_{x_\a} & \hbox{and} &\hat{\phi}_\a := (\varphi_{x_\a}^{-1} \phi_\a)\circ\hbox{exp}^{g_{x_\alpha}}_{x_\a},
\end{array}\]
for $\pa_\a = (x_\a, \mu_\a)$, where $\varphi_{x_\a}$ is as in \eqref{def:phi} and $\TBuba$  as in \eqref{def:TBub}. Therefore the pointwise control \eqref{eq:estimbndua:der} of Theorem \ref{th:bndua:der} writes
 \begin{equation}\label{bnd:phi:hat}  (\ma + |x|)^l |\nabla_\xi^l \hat{\phi}_\alpha(x)| \leq C\, \hat{\Theta}_\alpha(x) U_{\ma}(x)\hbox{ for all }x\in B_\delta(0)
 \end{equation}
for $l=0,\ldots, 2k-1$, where
 \begin{equation}\label{def:Umu}
\hat{\Theta}_\alpha(x):=\frac{(\alpha(\ma+|x|))^\tau}{(1+\alpha|x|)^{ p}}\hbox{ and }U_{\ma}(x):=\left(\frac{\ma}{\ma^2+a_{n,k}|x|^2}\right)^{\frac{n-2k}{2}} ,
\end{equation}
 for all $x\in B_{\delta}(0)$ and $\alpha>1$. Since  $\hat{V}_{\a} (x)=\chi(\alpha |x|)U_{\ma}(x)$ for all $x\in B_\delta(0)$, we get
\begin{equation}\label{bnd:V:hat}  (\ma + |x|)^l |\nabla_\xi^l \hat{V}_{\a}(x)| \leq C{\bf 1}_{\alpha|x|\leq 2}\, U_{\ma}(x)\hbox{ for all }x\in B_\delta(0)
 \end{equation}
We define the linear form $Q_\alpha:=\Delta_\xi^k - \Delta_{\ga}^k$. Since the Ricci tensor  of $g_{x_\alpha}$ vanishes at $x_\alpha$, an expansion yields (see for instance \cite{robert:gjms})
\begin{eqnarray*}
Q_\alpha(u)&=&(x)^2 \star\nabla_\xi^{2k}u+O\left(|x|\cdot|\nabla_\xi^{2k-1}u|+\sum_{i\leq 2k-2}|\nabla_\xi^{i}u|\right)
\end{eqnarray*}
for all $u\in C^{2k}(B_{2\delta}(0))$, where $(x)^q$ is the product of a homogeneous polynomial of degree $q$ by a smooth  function or tensor and  $T\star S$ denotes any linear combination and contraction of two tensors $T$ and $S$. The decomposition $\hua = \hat{V}_{\a} + \hat{\phi}_\a$ yields
\begin{eqnarray*}
 III_\alpha&=& \int_{B_\delta(0)}Q_\alpha(\hua) T(\hua) \,dx=\int_{B_\delta(0)}Q_\alpha( \hat{V}_{\a} + \hat{\phi}_\a)  T( \hat{V}_{\a} + \hat{\phi}_\a) \,dx\\
 &=&  \int_{B_\delta(0)}Q_\alpha( \hat{V}_{\a} )  T( \hat{V}_{\a}  ) \, dx+ \int_{B_\delta(0)}Q_\alpha( \hat{V}_{\a} )  T(\hat{\phi}_\a)\, dx+ \int_{B_\delta(0)}Q_\alpha( \hat{\phi}_\a )  T(\hua)\, dx
\end{eqnarray*}
We deal with the three terms separately. The control \eqref{bnd:V:hat} yields
$$Q_\alpha( \hat{V}_{\a} )(x)=O\left({\bf 1}_{\alpha |x|<2}\frac{U_{\ma}(x)}{(\ma+|x|)^{2k-2}}\right)$$
for all $x\in B_\delta(0)$. The control \eqref{bnd:phi:hat}   yields $T(\hat{\phi}_\a)=O\left(\hat{\Theta}_\alpha U_{\ma}\right)$.
%where
%$$\hat{\Theta}_\alpha(x):=\frac{(\alpha(\ma+|x|))^\tau}{(1+\alpha|x|)^{2kp}}\hbox{ for all }x\in B_{\delta}(0)\hbox{ and }\alpha>1.$$
We then get that 
$$\int_{B_\delta(0)}Q_\alpha( \hat{V}_{\a} )  T(\hat{\phi}_\a)\, dx=O\left(\int_{B_\delta(0)}{\bf 1}_{\alpha |x|<2}\frac{\hat{\Theta}_\alpha U_{\ma}^2}{(\ma+|\cdot|)^{2k-2}}\, dx\right).$$
Using \eqref{bnd:phi:hat} for derivatives up to $2k-1$, the same strategy yields
\begin{multline*}
\int_{B_\delta(0)}Q_\alpha( \hat{\phi}_\a )  T(\hua)\, dx\\
=\int_{B_\delta(0)}((x)^2 \star\nabla_\xi^{2k}\hat{\phi}_\alpha )\cdot T(\hua)\, dx+O\left(\int_{B_\delta(0)} \frac{\hat{\Theta}_\alpha U_{\ma}^2}{(\ma+|\cdot|)^{2k-2}}\, dx\right)
\end{multline*}
Integrating by parts allows to recover derivatives of $\phi_\a$ of order at most $2k-1$, since $k>1$. The same computations as above then yield 
%\begin{equation*}
%\int_{B_\delta(0)}((x)^2 \star\nabla_\xi^{2k}\hat{\phi}_\alpha )\cdot T(\hua)\, dx = -\int_{B_\delta(0)}\nabla((x)^2 T(\hua))\star\nabla_\xi^{2k-1}\hat{\phi}_\alpha \, dx+\int_{\partial B_\delta(0)} (x)^2 T(\hua)\star\nabla_\xi^{2k-1}\hat{\phi}_\alpha \, d\sigma.
%\end{equation*}
%We then recover derivatives of order $2k-1$ at most since $k>1$. The same computations as above yield
\begin{equation*}
\int_{B_\delta(0)}((x)^2 \star\nabla_\xi^{2k}\hat{\phi}_\alpha )\cdot T(\hua)\, dx = O\left(\int_{B_\delta(0)} \hat{\Theta}_\alpha\frac{U_{\ma}^2}{(\ma+|\cdot|)^{2k-2}}\, dx\right)+O\left(\frac{\ma^{n-2k}}{\alpha^{2q}}\right).
\end{equation*}
Now since $\hat{V}_{\a}$ is radially symmetrical, it follows that taking $N$ large enough in \eqref{def:phi}, as in \eqref{diff:riem:eucl}, we have $\Delta_{\ga}^k \hat{V}_\a = \Delta_\xi^k \hat{V}_\a + O(\mu_\a^{\frac{n-2k}{2}} |x|^{N-n})$. Therefore, since $\hat{V}_{\a}  =\chi(\alpha |\cdot|)U_{\ma}$ has support in  $B_{2\alpha^{-1}}(0)$, the above estimates yield
\begin{align*}%\label{estim:IIIalpha}
    &III_\alpha=\int_{B_\delta(0)}\left(\Delta_\xi^k \hat{V}_\a - \Delta_{\ga}^k \hat{V}_\a\right) T(\hat{V}_\a) dx\\ &\qquad +O\left(\int_{B_\delta(0)} \frac{\hat{\Theta}_\alpha U_{\ma}^2}{(\ma+|\cdot|)^{2k-2}}\, dx\right)+O\left(\frac{\ma^{n-2k}}{\alpha^{2q}}\right)\\
    &=   O\left(\int_{B_{2\alpha^{-1}}(0)} \mu_\a^{\frac{n-2k}{2}} |x|^{N-n} U_{\ma}(x)\,dx  +\int_{B_\delta(0)} \frac{\hat{\Theta}_\alpha U_{\ma}^2}{(\ma+|\cdot|)^{2k-2}}\, dx +\frac{\ma^{n-2k}}{\alpha^{2q}}\right)    %\left\{\begin{array}{cc}
\end{align*}
We now choose $\tau\in \rr$ such that 
\begin{equation}\label{choice:tau}
%\tau\in (0,2],\, \tau<2k\hbox{ and }\tau \leq \frac{n-2k}{2}.
0<\tau\leq \min \left\{2,\frac{n-2k}{2}\right\}.
\end{equation}
Note that this implies $\tau<2k$ since $k>1$. We then get the estimate 
%
%
%
%
%Plugging all these estimates together and using the choice of $\tau>0$ in \eqref{choice:tau}, we get that
%\begin{eqnarray*}%\label{estim:IIIalpha}
%    III_\alpha&=&\int_{B_\delta(0)}\left(\Delta_\xi^k \hat{V}_\a - \Delta_{\ga}^k \hat{V}_\a\right) T(\hat{V}_\a) dx +O\left(\int_{B_\delta(0)} \hat{\Theta}_\alpha\frac{U_{\ma}^2}{(\ma+|\cdot|)^{2k-2}}\, dx\right)+O\left(\frac{\ma^{n-2k}}{\alpha^{2q}}\right)\\
%    &=&   \int_{B_\delta(0)}\left(\Delta_\xi^k \hat{V}_\a - \Delta_{\ga}^k \hat{V}_\a\right) T(\hat{V}_\a) dx +O(\theta_\alpha^\prime)+ O\left(\frac{\ma^{n-2k}}{\alpha^{2q}}\right)
%    %\left\{\begin{array}{cc}
%    \end{eqnarray*}
%Now since $\hat{V}_{\a}$ is radially symmetrical, it follows that taking $N$ large enough in \eqref{def:phi}, as in \eqref{diff:riem:eucl}, we have $\Delta_{\ga}^k \hat{V}_\a = \Delta_\xi^k \hat{V}_\a + O(\mu_\a^{\frac{n-2k}{2}} |x|^{N-n})$ and thus, using that $\hat{V}_{\a}  =\chi(\alpha |\cdot|)U_{\ma}$ has support in  $B_{2\alpha^{-1}}(0)$, we get that
$III_\a = O(\theta_\alpha)+O(\theta_\alpha^\prime)$ where we have set
\begin{equation}
\begin{aligned}
    \theta_\a&=\left\{\begin{array}{cc}
    \ma^4 &\hbox{ if }n> 2k+4\\
    \ma^4\ln\frac{1}{\alpha\ma} &\hbox{ if }n = 2k+4\\
    \frac{\ma^{n-2k}}{\alpha^{2k+4-n}}&\hbox{ if }n< 2k+4
    \end{array}\right\},\\
    \theta_\a^\prime&=\left\{\begin{array}{cc}
    \ma^2 (\alpha\ma)^\tau&\hbox{ if }n> 2k+2+\tau\\
    \ma^2 (\alpha\ma)^\tau \ln\frac{1}{\alpha\ma} &\hbox{ if }n= 2k+2+\tau\\
    \ma^2 (\alpha\ma)^{n-2k-2} &\hbox{ if }n<2k+2+\tau
\end{array}\right\}.\label{def:theta}
\end{aligned}
\end{equation}

%\subsection{Estimates of $II_\a$ for $n\geq2k+4$ and $n=2k+1$.}
%Arguing as in \cite{robert:gjms}, Section 15.2, and using again the pointwise control of Theorem \ref{th:bndua} with $q$ large enough, we get that
%\begin{eqnarray*}
%II_\alpha&=&\left\{\begin{array}{cc}
%\ma^2c_{n,k}R_g(x_\infty)\int_{\rn} (\Delta_{\xi}^{\frac{k-1}{2}}U)^2 \, dx+o(\theta_\a)&\hbox{ if }n>2k+4\\
% O\left( \theta_\a\right) &\hbox{ if }n=2k+1,
%\end{array}\right.\\
%&&+\alpha^{2k}\int_{B_{\delta}(0)}   \phia^{2-\crit}   \hua  T(\hua)\, dx
%\end{eqnarray*}
%where $c_{n,k}$ is as in \eqref{def:petit:c}. 

 \subsection{Estimates of $II_\a$.} With the estimate \eqref{eq:estimbndua}, we get that
 \begin{eqnarray*}
    II_\alpha&=&\int_{B_\delta(0)}(-1)^{k-1}\nabla_{g_\a}^{k-1}(\hat{A}_\a^{(k-1)}\nabla_{g_\a}^{k-1}\hat{u}_\a)~T(\hat{u}_\a)\, dx\\
    && +\alpha^{2k}\int_{B_{\delta}(0)}   \phia^{2-\crit}   \hua  T(\hua)\, dx+O(\theta_\a).
    \end{eqnarray*}
 We use the decomposition of $\hua = \hat{V}_{\a} + \hat{\phi}_\a$, and thanks to the estimates on $\phi_\a$ given by \eqref{bnd:phi:hat}, we have that 
\begin{eqnarray*}
    II_\alpha&=&\int_{B_\delta(0)}(-1)^{k-1}\nabla_{g_\a}^{k-1}(\hat{A}_\a^{(k-1)}\nabla_{g_\a}^{k-1}\hat{V}_\a)~T(\hat{V}_\a)\, dx\\
    && +\alpha^{2k}\int_{B_{\delta}(0)}   \phia^{2-\crit}   \hua  T(\hua)\, dx+O(\theta_\a)+O(\theta_\alpha^\prime).
    \end{eqnarray*}
Using \eqref{eq:76bis} and that $\nabla^p_{g_\alpha}T=\nabla^p_{\xi}T+(x)\star\nabla^{p-1}T+\sum_{m<p-1} O(\nabla^m T)$, we get that
%
%Arguing as in \cite{robert:gjms}, Section 15.2, and using formula (2.7) in \cite{mv}, we get that 
%\begin{eqnarray}\label{eq:lotinAk-1}
%    (-1)^{k-1}\nabla^{k-1}_{\ga}(\hat{A}_\alpha^{(k-1)}  \nabla^{k-1}_{\ga}\hat{V}_\a)&=& -\Delta_{\ga}^{k-2}(\hat{T}_\alpha,\nabla_{\ga}^2 \hat{V}_\a) - \D_{\ga}^{k-3}(\hat{T}^{(3)}_\a, \nabla_{\ga}^3 \hat{V}_\a)\\
%        &&\notag + O\left(|x||\nabla^{2k-3} \hat{V}_\a| + \sum_{m=0}^{2k-4} |\nabla^m \hat{V}_\a|\right).
%    \end{eqnarray}
%Here $\hat{T}_\alpha$ is as defined in \eqref{def:hat:T}, and $\hat{T}^{(3)}_\a$ is a smooth symmetric $(0,3)$-tensor whose exact expression involves only tensor products and derivatives of the metric $g$, the Ricci tensor and the scalar curvature. 
%
\begin{eqnarray*}
&&    \int_{B_\delta(0)}(-1)^{k-1}\nabla_{g_\a}^{k-1}(\hat{A}_\a^{(k-1)}\nabla_{g_\a}^{k-1}\hat{V}_\a)~T(\hat{V}_\a)\, dx\\
   &     &= -\int_{B_\delta(0)} \D_{\xi}^{k-2} (\hat{T}_\a, \nabla^2_\xi \hat{V}_\a) ~T(\hat{V}_\a)\, dx - \int_{B_\delta(0)} \D_{\xi}^{k-3}(\hat{T}^{(3)}_\a, \nabla^3_\xi \hat{V}_\a)~T(\hat{V}_\a)\, dx + O(\theta_\a)\\
        &&= A_\a + B_\a + O(\theta_\a),     
\end{eqnarray*} 
where we let 
\begin{eqnarray*}
  &&  A_\a := -\int_{B_{\delta}(0)} \hat{T}_\a^{ij}(0) \partial_{ij}\D_{\xi}^{k-2} \hat{V}_\a ~T(\hat{V}_\a)\, dx,\\
  &&  B_\a := -\int_{B_\delta(0)} x^p \partial_p \hat{T}_\a^{ij}(0) \partial_{ij}\D_\xi^{k-2}\hat{V}_\a~T(\hat{V}_\a)\, dx\\ 
  &&+ 2\int_{B_\delta(0)} \partial_p \hat{T}_\a^{ij}(0)\partial_{pij}\D_\xi^{k-3}\hat{V}_\a~T(\hat{V}_\a)\, dx - \int_{B_\delta(0)}\hat{T}^{(3)ijl}_\a(0) \partial_{ijl}\D_{\xi}^{k-3}\hat{V}_\a~T(\hat{V}_\a)\, dx.
\end{eqnarray*}
Since $\hat{V}_{\a} =\chi(\alpha |\cdot|)U_{\ma}$ where $U_{\ma}$ is as in \eqref{def:Umu}, then $\hat{V}_\a$ and its successive Euclidean Laplacians are radially symmetrical and then $B_\alpha=0$.
%by its definition and the definition of $\TBuba$ in \eqref{def:TBub}, it is easily obtained that 
%\begin{multline*}  
%    \int_{B_\delta(0)} x^p \partial_p \hat{T}_\a^{ij}(0) \partial_{ij}\D_\xi^{k-2}\hat{V}_\a~T(\hat{V}_\a)\, dx\\ = \mu^3\int_0^{\frac{1}{\a\mu_\a}}dr\, r^n \partial_p \hat{T}_\a^{ij}(0)\left(\frac{n-2k}{2}U + r\partial_r U\right) \int_{\mathbb{S}^{n-1}}d\sigma\, \sigma_p \left(\sigma_i\sigma_j \partial_{rr}\D_\xi^{k-2} U + \frac{\delta_{ij}-\sigma_i\sigma_j}{r}\partial_r \D_\xi^{k-2}U\right) + o(\theta_\a)\\
%    = 0 + o(\theta_\a).
%\end{multline*}
%All the other integrals in $B_\a$ are computed similarly, so that  
%\[ B_\a = o(\theta_\a).\]
%\[ B_\a = 0.\]
Finally, arguing as in \cite{robert:gjms}, Section 15.2, and noting that $\hat{V}_\alpha\in C^\infty(\rn)$ is radially symmetrical and has compact support in $B_\delta(0)$, we have that
\begin{eqnarray*}  
    A_\a &= &-\int_{B_\delta(0)} \hat{T}_\a^{ij}(0)~ \partial_{ij}\Delta_\xi^{k-2}\hat{V}_\a T(\hat{V}_\alpha)\,dx\\
    &= &- \sum_i \hat{T}_\a^{ii}(0) \int_{B_\delta(0)}  \partial_{ii}\Delta_\xi^{k-2}\hat{V}_\a T(\hat{V}_\alpha)\,dx\\
    &=&  \frac{Tr(\hat{T}_\a )(0) }{n}\int_{B_\delta(0)}   \Delta_\xi^{\frac{k-1}{2}}\hat{V}_\a \Delta_\xi^{\frac{k-1}{2}}(T(\hat{V}_\alpha))\,dx
%%&= & \frac{\sum_i \hat{T}_\a^{ii}(0) }{n}\int_{B_\delta(0)}   \Delta_\xi^{k-1}\hat{V}_\a T(\hat{V}_\alpha)\,dx\\
% &= & \frac{Tr(\hat{T}_\a )(0) }{n}\int_{B_\delta(0)}   \Delta_\xi^{\frac{k-1}{2}}\hat{V}_\a \Delta_\xi^{\frac{k-1}{2}}(T(\hat{V}_\alpha))\,dx
 \end{eqnarray*}
It follows from Lemma 13.1 in \cite{robert:gjms} that
\begin{eqnarray*}
\Delta_\xi^{\frac{k-1}{2}}(T(\hat{V}_\alpha))&=& \frac{n-2}{2}\Delta_\xi^{\frac{k-1}{2}} \hat{V}_\alpha+x^p\partial_p\Delta_\xi^{\frac{k-1}{2}}\hat{V}_\alpha
\end{eqnarray*}
and therefore
\begin{eqnarray*} 
A_\a &= &  \frac{Tr(\hat{T}_\a )(0) }{n}\int_{B_\delta(0)} \left(  \frac{n-2}{2}(\Delta_\xi^{\frac{k-1}{2}} \hat{V}_\alpha)^2+\frac{x^p}{2}\partial_p(\Delta_\xi^{\frac{k-1}{2}}\hat{V}_\alpha)^2\right)\,dx\\
&=&-\frac{Tr(\hat{T}_\a )(0) }{n}\int_{B_\delta(0)} \left(   \Delta_\xi^{\frac{k-1}{2}} \hat{V}_\alpha\right)^2\,dx
 \end{eqnarray*}

%    
%    
%    &= -\mu_\a^2 \int_{0}^{\frac{1}{\a\mu_\a}} r^{n-1}\left(\partial_r(\Delta_\xi^{\frac{k-2}{2}}U)\right)^2 \, dr \int_{\mathbb{S}^{n-1}} \hat{T}_\a^{ij}(0) \, \sigma_i \sigma_j\, d\sigma +o(\theta_\a)\\
%    &= - \mu_\a^2 \omega_{n-1} \frac{Tr(\hat{T}_\a(0))}{n} b_{n,k}\left\{\begin{array}{cc}
%        1 & \hbox{ if } n=2k+3\\
%        \ln \frac{1}{\a\mu_\a}+ O(1) & \hbox{ if } n=2k+2
%    \end{array}\right\}+o(\theta_\alpha)
%\end{align*}
%for some explicit constant $b_{n,k}>0$.
With these estimates and the definition \eqref{def:hat:T} of $\hat{T}_\a$, we   get that 
\begin{equation}\label{est:II}
II_\a = c_{n,k}R_g(x_\alpha)\int_{B_\delta(0)} \left(   \Delta_\xi^{\frac{k-1}{2}} \hat{V}_\alpha\right)^2\,dx + \alpha^{2k}\int_{B_{\delta}(0)}   \phia^{2-\crit}   \hua  T(\hua)\, dx + O(\theta_\a)+O(\theta_\a^\prime)
\end{equation}
for $c_{n,k}>0$ defined in \eqref{def:petit:c}. Integrating by parts and using \eqref{ineq:59}, we get that
\begin{eqnarray*}
&&\int_{B_{\delta}(0)}   \phia^{2-\crit}   \hua  T(\hua)\, dx=\int_{B_{\delta}(0)}   \phia^{2-\crit}   \hua  \left(\frac{n-2k}{2}\hua+x^i\partial_i \hua\right)\, dx \\
&=&\int_{B_{\delta}(0)}   \left(\frac{n-2k}{2}\phia^{2-\crit}\hua^2+\phia^{2-\crit}\frac{x^i}{2}\partial_i (\hua^2)\right)\, dx \\
&=&\int_{B_{\delta}(0)}  \left(\frac{n-2k}{2}\phia^{2-\crit} -\partial_i(\phia^{2-\crit}\frac{x^i}{2})\right) \hua^2\, dx+\int_{\partial B_{\delta}(0)}    \phia^{2-\crit}\frac{(x,\nu)}{2} \hua^2 \, d\sigma\\
%&=&\int_{B_{\delta}(0)}  \left(\frac{n-2k}{2}\phia^{2-\crit} -\partial_i(\phia^{2-\crit}\frac{x^i}{2})\right) \hua^2\, dx+O\left(\frac{\ma^{n-2k}}{\alpha^{2q}}\right)\\
%&=&\int_{B_{\delta}(0)}  \left(-k\phia^{2-\crit} - \frac{x^i\partial_i \phia^{2-\crit}}{2} \right) \hua^2\, dx+O\left(\frac{\ma^{n-2k}}{\alpha^{2q}}\right)\\
&=&-k\int_{B_{\delta}(0)}  \left(1+O(|x|)\right) \hua^2\, dx+O\left(\frac{\ma^{n-2k}}{\alpha^{2q}}\right)\end{eqnarray*}
And then, using the controls of $III_\alpha$ and $IV_\alpha$ and $III_\alpha-II_\alpha= IV_\alpha$, we get
\begin{equation}\label{eq:45}
c_{n,k}R_g(x_\alpha)\int_{B_\delta(0)} \left(   \Delta_\xi^{\frac{k-1}{2}} \hat{V}_\alpha\right)^2\,dx -k \alpha^{2k}\int_{B_{\delta}(0)}  \left(1+O(|x|)\right) \hua^2\, dx = O(\theta_\a)+O(\theta_\a^\prime).
\end{equation}
The explicit expression of $\hat{V}_\alpha(x)=\chi(\alpha|x|)U_{\ma}(x)$ for all $x\in \rn$ yields the existence of some $c_n^{(1)}>0$ such that
\begin{equation}\label{est:deltaudeux}
\int_{B_\delta(0)} \left(   \Delta_\xi^{\frac{k-1}{2}} \hat{V}_\alpha\right)^2\,dx =\left\{\begin{array}{cc}
\ma^2\left(\int_{\rn} \left(   \Delta_\xi^{\frac{k-1}{2}} U\right)^2\,dx +o(1)\right)&\hbox{ if }n>2k+2\\
(c_n^{(1)}+o(1))\ma^2\ln\frac{1}{\alpha\ma}  &\hbox{ if }n=2k+2\\
O\left(\frac{\ma}{\alpha}\right)&\hbox{ if }n=2k+1.
\end{array}\right.
\end{equation}
\subsection{Conclusion for $n\geq 2k+2$ and $R_g<0$}
As one checks,   $\theta_\alpha=o(\ma^2)$ when $n\geq 2k+2$; $\theta_\alpha'=o(\ma^2)$ when $n>2k+2$; and $\theta_\alpha'=O(\ma^2)$ when $n=2k+2$. Therefore, \eqref{eq:45} and \eqref{est:deltaudeux} yield
\begin{equation*}
(c_{n,k}R_g(x_\alpha)+o(1))\int_{B_\delta(0)} \left(   \Delta_\xi^{\frac{k-1}{2}} \hat{V}_\alpha\right)^2\,dx =k \alpha^{2k}\int_{B_{\delta}(0)}  \left(1+O(|x|)\right) \hua^2\, dx 
\end{equation*}
as $\alpha\to +\infty$. This implies that $R_g(x_\alpha)\geq 0$ for $\alpha\to +\infty$ and yields  a contradiction if $R_g$ is negative. Therefore \eqref{ineq:opt} holds when $n\geq 2k+2$ and $R_g< 0$ everywhere.

\smallskip\noindent For the other cases, we need to be more precise with the $L^2-$term in \eqref{eq:45}.

\subsection{Estimate of the $L^2-$norm of $\hua$}
Let us first assume that $n\geq 4k$. Since $k>1$, we take $\tau=2$ in \eqref{choice:tau}. Since $\hua=\hat{V}_\alpha+\hat{\phi}_\alpha$, We have that
$$\int_{B_\delta(0) }(1+O(|x|))\hua^2\, dx=\int_{B_\delta(0) } \hat{V}_\alpha^2\, dx+O\left(\int_{B_\delta(0) }|x|\hat{V}_\alpha^2\, dx+\int_{B_\delta(0) }\hat{\Theta}_\alpha U_{\ma}^2\, dx\right).$$
The second term of the right-hand-side is $o(\ma^{2k})$ if $n>4k$ and $O(\ma^{2k})$ if $n=4k$. The first term is computed explicitly since $\hat{V}_\alpha(x)=\chi(\alpha |x|)U_{\ma}(x)$ for all $x\in\rn$. Then there exists $c_n^{(2)}>0$ such that
\begin{equation*}
\alpha^{2k}\int_{B_\delta(0)} (1+O(|x|))\hua^2\, dx =\left\{\begin{array}{cc}
(\alpha\ma)^{2k}\left(\int_{\rn} U^2\,dx +o(1)\right)&\hbox{ if }n>4k\\
(c_n^{(2)}+o(1))(\alpha\ma)^{2k}\ln\frac{1}{\alpha\ma}  &\hbox{ if }n=4k\end{array}\right.
\end{equation*}
In order to deal with the case $n<4k$, we need an intermediate result:
\begin{lemma}\label{prop:rescal} We have that
\begin{equation*}
\lim_{\alpha\to +\infty} \frac{1}{\alpha^{n-2k}\ma^{\frac{n-2k}{2}}}\ua(\hbox{exp}^g_{\xa}(\alpha^{-1}\,\cdot \,))=c_U\Gamma \quad \hbox{in }C^{2k}_{loc}(\rn-\{0\}), 
\end{equation*}
where $c_U=\int_{\rn}U^{\crit-1}\, dx>0$ and $\Gamma$ is the Green's function for the operator $\D_\xi^k+1$ on $\rr^n$ given by Theorem \ref{th:green:rn}. 
\end{lemma}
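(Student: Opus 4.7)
The plan is to rescale at the natural scale $\alpha^{-1}$, derive a limiting distributional equation $(\Delta_\xi^k+1)w_\infty = c_U\delta_0$ on $\rn$, and identify $w_\infty$ with $c_U\Gamma$ via uniqueness. Define
$$w_\alpha(y) := \frac{1}{\alpha^{n-2k}\ma^{(n-2k)/2}}\,\ua(\exp_{\xa}^g(\alpha^{-1}y)),\qquad y\in B_{\alpha\delta}(0)\subset\rn,$$
and $\hat g_\alpha(y) := (\exp_{\xa}^g)^* g(\alpha^{-1}y)$; since $\hat g_\alpha(0)=\xi$ and the ball $B_{\alpha\delta}(0)$ exhausts $\rn$, we have $\hat g_\alpha\to\xi$ in $C^\infty_{loc}(\rn)$. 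A direct change of variables in \eqref{eq:ua} yields
$$(\Dg[\hat g_\alpha]^k + 1)w_\alpha = f_\alpha\quad\text{in }B_{\alpha\delta}(0),\qquad f_\alpha(y):=\frac{(|\ua|^{\crit-2}\ua)(\exp_{\xa}^g(\alpha^{-1}y))}{\alpha^n\ma^{(n-2k)/2}}.$$

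Using the pointwise control \eqref{eq:estimbndua} with $d_g(\xa,\exp_{\xa}^g(\alpha^{-1}y)) \simeq \alpha^{-1}|y|$, I would extract the uniform bound
$$|\nabla^l w_\alpha(y)|\leq \frac{C_p}{(1+|y|)^p\,(\alpha\ma+|y|)^{n-2k+l}},\qquad l=0,\ldots,2k-1,$$
which gives $C^{2k-1}_{loc}(\rn\setminus\{0\})$ bounds and a locally integrable majorant $|w_\alpha(y)|\leq C|y|^{-(n-2k)}$ near the origin (since $\alpha\ma\to 0$). The same pointwise control forces $f_\alpha\to 0$ uniformly on compact subsets of $\rn\setminus\{0\}$. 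To identify the mass of $f_\alpha$ near $0$, for $\phi\in C^0_c(\rn)$ I rescale by $\ma$: writing $v_\alpha(X):=\ma^{(n-2k)/2}\ua(\exp_{\xa}^g(\ma X))\to \eBub$ in $C^{2k}_{loc}(\rn)$, and using the uniform domination $|v_\alpha(X)|^{\crit-1}\leq C(1+|X|)^{-(n+2k)}$ afforded again by \eqref{eq:estimbndua}, dominated convergence yields
$$\int_{\rn}f_\alpha\phi\,dy = \int_{\rn}|v_\alpha|^{\crit-2}v_\alpha(X)\,\phi(\alpha\ma X)\,(1+o(1))\,dX + o(1) = c_U\phi(0)+o(1),$$
since $\alpha\ma\to 0$. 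Hence $f_\alpha\rightharpoonup c_U\delta_0$ in $\mathcal{D}'(\rn)$.

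Schauder estimates applied to $(\Dg[\hat g_\alpha]^k+1)w_\alpha=f_\alpha$ upgrade the $C^{2k-1}$ bound to $C^{2k,\theta}$ on compact subsets of $\rn\setminus\{0\}$, and Arzel\`a--Ascoli with a diagonal extraction gives $w_\alpha\to w_\infty$ in $C^{2k}_{loc}(\rn\setminus\{0\})$ along a subsequence. Pairing the equation with any $\psi\in C^\infty_c(\rn)$ (whose support is eventually in $B_{\alpha\delta}(0)$) and invoking the $L^1_{loc}$ majorant of $w_\alpha$ near the origin together with $\hat g_\alpha\to\xi$ smoothly and $f_\alpha\rightharpoonup c_U\delta_0$, I pass to the limit to obtain $(\Delta_\xi^k+1)w_\infty = c_U\delta_0$ in $\mathcal{D}'(\rn)$.

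Finally, $h:=w_\infty-c_U\Gamma$ solves $(\Delta_\xi^k+1)h=0$ in $\mathcal{D}'(\rn)$, is in $L^1_{loc}(\rn)$, and decays rapidly at infinity (for $w_\infty$ through the factor $(1+|y|)^{-p}$, for $\Gamma$ by Theorem \ref{th:green:rn}). Hypoellipticity of the constant-coefficient elliptic operator $\Delta_\xi^k+1$ forces $h\in C^\infty(\rn)$, hence $h\in L^2(\rn)$, and Plancherel applied to $(|\xi|^{2k}+1)\hat h=0$ gives $h\equiv 0$. Uniqueness of the subsequential limit yields convergence of the full sequence, proving the lemma. \textbf{The main obstacle} is rigorously extending the distributional identity from $\rn\setminus\{0\}$ to all of $\rn$: this hinges on the integrable upper bound $|w_\alpha(y)|\leq C|y|^{-(n-2k)}$ being uniform in $\alpha$, which in turn is precisely the strength of the fast-decay pointwise control supplied by Theorem \ref{th:bndua}.
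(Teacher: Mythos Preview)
Your proposal is correct and follows essentially the same route as the paper: rescale at scale $\alpha^{-1}$, use the pointwise control \eqref{eq:estimbndua} to get uniform bounds and compactness in $C^{2k}_{loc}(\rn\setminus\{0\})$, test against $\psi\in C^\infty_c(\rn)$ to derive $(\Delta_\xi^k+1)w_\infty=c_U\delta_0$ distributionally, and conclude by uniqueness. The only cosmetic differences are that the paper writes the right-hand side as $(\alpha\ma)^{2k}|w_\alpha|^{\crit-2}w_\alpha$ (making its vanishing away from $0$ immediate), performs the distributional pairing by pulling back to $M$ rather than working directly on $\rn$, and invokes the $L^1$-uniqueness of Theorem~\ref{th:green:rn} in place of your hypoellipticity-plus-Plancherel argument.
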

{\it Proof of Lemma \ref{prop:rescal}.} For any $\alpha\geq 1$, we define 
$$\wa(X):=\frac{1}{\alpha^{n-2k}\ma^{\frac{n-2k}{2}}}\ua(\hbox{exp}_{\xa}(\alpha^{-1}X))\hbox{ for all }X\in B_{\alpha i_g}(0),$$
where in this proof, $\hbox{exp}_{\xa}=\hbox{exp}_{\xa}^g$ for simplicity. Equation \eqref{eq:ua} rewrites
\begin{equation*}
\Delta_{\bar{g}_\alpha}^k\wa+\wa=(\alpha\ma)^{2k}|\wa|^{\crit-2}\wa\hbox{ in }B_{\alpha i_g }(0),
\end{equation*}
where $\bar{g}_\alpha:=(\hbox{exp}_{\xa} )(\alpha^{-1}\cdot)$ converges $C^{2k}_{loc}$ to the Euclidean metric $\xi$. Moreover, the upper bound \eqref{eq:estimbndua}  for $l=0$ rewrites
\begin{equation*}
|\wa(X)|\leq C_q\frac{|X|^{2k-n}}{1+|X|^q}\hbox{ for all }X\in B_{\alpha i_g }(0).
\end{equation*}
Since $\alpha\ma\to 0$ as $\alpha\to +\infty$, it   follows from elliptic theory (see \cite{ADN} or the Appendix in \cite{robert:gjms}) that there is $  w_\infty\in C^\infty(\rn-\{0\})$ such that
\begin{equation}\label{cv:wa}
\lim_{\alpha\to +\infty}\wa=w_\infty\hbox{ in }C^{2k}_{loc}(\rn-\{0\})\hbox{ with }|w_\infty(X)|\leq C_q\frac{|X|^{2k-n}}{1+|X|^q}\hbox{ for all }X\in \rn.
\end{equation}
Let $\varphi\in C^\infty_c(\rn)$ be a test-function. We define $\varphi_\alpha:=\varphi(\alpha \hbox{exp}_{\xa}^{-1}(\cdot))$ for all $x\in M$. Integration by parts, a change of variable and equation \eqref{eq:ua} yield
\begin{equation}\label{eq:dist:wa}
\int_{\rn}\wa\left(\Delta_{\bar{g}_\alpha}^k\varphi+\varphi\right)\, dv_{\bar{g}_\alpha}=\ma^{-\frac{n-2k}{2}}\int_M|\ua|^{\crit-2}\ua \varphi_\alpha\, dv_g
\end{equation}
The control \eqref{eq:estimbndua}, \eqref{lim:tua:c2k:bis}, $U\in L^{\crit-1}(\rn)$ and $\tilde{g}_\alpha:=(\hbox{exp}_{\xa}^\star g)(\ma\cdot)\to \xi$ yield 
%
%Given $R>0$, the pointwise control \eqref{eq:estimbndua}  yields
%$$\left|\ma^{-\frac{n-2k}{2}}\int_{M-B_{R\ma }(\xa)}|\ua|^{\crit-2}\ua \varphi_\alpha\, dv_g\right|\leq C\ma^{2k}+C\int_{\rn-B_R(0)}(1+|x|^2)^{-\frac{n+2k}{2}}\, dx.$$
%On the ball, we have that
%$$ \ma^{-\frac{n-2k}{2}}\int_{B_{R\ma }(\xa)}|\ua|^{\crit-2}\ua \varphi_\alpha\, dv_g=\int_{B_{R  }(0)}|\va|^{\crit-2}\va \varphi (\alpha\ma X)\, dv_{\tilde{g}_\alpha}$$
%where  $\tilde{g}_\alpha:=(\hbox{exp}_{\xa}^\star g)(\ma\cdot)$ converges $C^{2k}_{loc}$ to the Euclidean metric $\xi$. Using \eqref{lim:tua:c2k:bis}, $U\in L^{\crit-1}(\rn)$, $\alpha\ma\to 0$ as $\alpha\to +\infty$, letting $\alpha\to +\infty$ in \eqref{eq:dist:wa} and then $R\to +\infty$ in the above expressions, we get that
\begin{equation*}%\label{eq:dist:winfinity}
\int_{\rn}w_\infty\left(\Delta_\xi^k\varphi+\varphi\right)\, dx=\left(\int_{\rn}U^{\crit-1}\, dx\right)\varphi(0)\hbox{ for all }\varphi\in C^\infty_c(\rn).
\end{equation*}
Since $w_\infty\in L^1(\rn)$ by \eqref{cv:wa}, it follows from the uniqueness in Theorem \ref{th:green:rn} that $w_\infty=\left(\int_{\rn}U^{\crit-1}\, dx\right)\Gamma$. This proves Lemma \ref{prop:rescal}.\qed

% and $\alpha>0$, and it is asymptotically proportional to $\ma^{2k}\ln\frac{1}{\alpha\ma}$. 
%
%
%For any $R>0$, it follows from \eqref{ineq:59} that
%$$\int_{B_\delta(0)-B_{R\ma}(0)}\hua^2\, dx\leq C\ma^2\int_{B_{\delta/\ma}-B_R(0)}\frac{dX}{(1+|X|^2)^{n-2k}}.$$
%Therefore, 
%$$\lim_{R\to +\infty}\lim_{\alpha\to +\infty}\ma^{-2}\int_{B_\delta(0)-B_{R\ma}(0)}\hua^2\, dx=0\hbox{ when }n>4k.$$
%It follows from \eqref{lim:tua:c2k:bis} that for all $R>0$, 
%$$\ma^{-2}\int_{ B_{R\ma}(0)}(1+O(|x|))\hua^2\, dx=\int_{ B_{R }(0)}(1+O(\ma|X|))v_\alpha^2\, dX\to \int_{B_R(0)}U^2\, dX\hbox{ as }\alpha\to +\infty.$$
%Since $U\in L^2(\rn)$ when $n>4k$, we get that
%$$\alpha^{2k}\int_{B_\delta(0) }(1+O(|x|))\hua^2\, dx=(\alpha\ma)^{2k}\left(\int_{\rn}U^2\, dX+o(1)\right)\hbox{ when }n>4k.$$

\medskip\noindent We come back to the case $n<4k$. We fix $R>1$. It follows from Lemma \ref{prop:rescal} that for any $\alpha>0$, we have that
\begin{eqnarray*}
\int_{B_{R\alpha^{-1}}(0)-B_{R^{-1}\alpha^{-1}}(0)}\hua^2\, dx&=&
%\alpha^{n-4k}\ma^{n-2k}\int_{B_{R }(0)-B_{R^{-1 }(0)}}\wa^2\, dx\\
%&=&
\alpha^{n-4k}\ma^{n-2k}\left(c_U^2\int_{B_{R }(0)-B_{R^{-1 }(0)}}\Gamma^2\, dx+o(1)\right).
\end{eqnarray*}
With the pointwise control \eqref{ineq:59}, we get that
\begin{eqnarray*}
&&\int_{(B_\delta(0)-B_{R\alpha^{-1}}(0))\cup B_{R^{-1}\alpha^{-1}}(0)}\hua^2\, dx
%&&\leq C\int_{ (B_\delta(0)-B_{R\alpha^{-1}}(0))\cup B_{R^{-1}\alpha^{-1}}(0)}\frac{\ma^{n-2k}}{(\ma+|x|)^{2(n-2k)}(1+\alpha|x|)^q)}\, dx\\
%&&\leq  C\int_{  B_{R^{-1}\alpha^{-1}}(0)}  \ma^{n-2k}|x|^{4k-2n}\, dx + C\int_{ B_\delta(0)-B_{R\alpha^{-1}}(0) }\frac{\ma^{n-2k}}{ |x| ^{2(n-2k)+q} \alpha^q}\, dx\\
\leq C \frac{\ma^{n-2k}\alpha^{n-4k}}{R^{4k-n}}+C \frac{\ma^{n-2k}}{\alpha^{4k-n}R^{-4k+n+q}}
\end{eqnarray*}
for $q$ large enough. Therefore, since $\Gamma\in L^2(\rn)$ for $n<4k$, letting $\alpha\to +\infty$ and then $R\to +\infty$, we get that
$$ \lim_{\alpha\to +\infty}\alpha^{2k}\int_{B_\delta(0) }(1+O(|x|))\hua^2\, dx=(\alpha\ma)^{n-2k}\left( c_U^2\int_{\rn} \Gamma^2\, dx+o(1)\right)\hbox{ if }n<4k.$$
%The limit case $n=4k$ requires to be a bit more precise and to use that $\hua=\hat{V}_\alpha+\hat{\phi}_\alpha$. Since $n=4k$ and $k>1$, we take $\tau=2$ in \eqref{choice:tau}. We have that
%$$\int_{B_\delta(0) }(1+O(|x|))\hua^2\, dx=\int_{B_\delta(0) } \hat{V}_\alpha^2\, dx+O\left(\int_{B_\delta(0) }|x|\hat{V}_\alpha^2\, dx+\int_{B_\delta(0) }\hat{\Theta}_\alpha U_{\ma}^2\, dx\right)$$
%and the second terme of the right-hand-side is $O(\ma^{2k})$ since $n=4k$. The first term is computed explicitly since $\hat{V}_\alpha(x)=\chi(\alpha |x|)U_{\ma}(x)$ for all $x\in\rn$ and $\alpha>0$, and it is asymptotically proportional to $\ma^{2k}\ln\frac{1}{\alpha\ma}$. Finally, we get that there exists  $c_n^{(2)}>0$ such that
Finally, all theses cases yield
\begin{equation}\label{est:udeux}
\alpha^{2k}\int_{B_\delta(0)} (1+O(|x|))\hua^2\, dx =\left\{\begin{array}{cc}
(\alpha\ma)^{2k}\left(\int_{\rn} U^2\,dx +o(1)\right)&\hbox{ if }n>4k\\
(c_n^{(2)}+o(1))(\alpha\ma)^{2k}\ln\frac{1}{\alpha\ma}  &\hbox{ if }n=4k\\
(\alpha\ma)^{n-2k}\left(c_U^2\int_{\rn} \Gamma^2\,dx +o(1)\right)&\hbox{ if }n<4k.
\end{array}\right.
\end{equation}
for some $c_n^{(2)}>0$.

\subsection{Conclusion and proof of the validity of \eqref{ineq:opt} in the remaining cases} We are now plugging \eqref{est:deltaudeux} and \eqref{est:udeux}  into the estimate \eqref{est:II} of $II_\alpha$ with the definition \eqref{def:theta} of $\theta_\alpha,\theta_\alpha^\prime$ and the choice \eqref{choice:tau} of $\tau$. Recall that   $k>1$ and then $\tau<2k$.

\smallskip\noindent{\bf Case 1: $n=2k+1$.} We choose   $0<\tau<1/2$. Then \eqref{eq:45} rewrites

$$O\left(\frac{\ma}{\alpha}\right) -k (\alpha\ma)\left(c_U^2\int_{\rn} \Gamma^2\,dx +o(1)\right) = O\left(\frac{\ma}{\alpha^3}\right)+O\left(\frac{\ma}{\alpha}\right),$$
so that $\alpha\ma=O\left(\frac{\ma}{\alpha}\right)$, which is a contradiction. Therefore \eqref{ineq:opt} holds when $n=2k+1$ regardless of the geometry.

\smallskip\noindent{\bf Case 2: $n\in \{2k+2,2k+3\}$.} The constraint \eqref{choice:tau} allows to take any $0<\tau<1$ for $n=2k+2$ and $1<\tau<3/2$ for $n=2k+3$. Here, \eqref{eq:45} rewrites
\begin{multline*}
    c_{n,k}R_g(x_\alpha) \int_{B_\delta(0)} \left(   \Delta_\xi^{\frac{k-1}{2}} \hat{V}_\alpha\right)^2\,dx -k (\alpha\ma)^{n-2k}\left(c_U^2\int_{\rn} \Gamma^2\,dx +o(1)\right)\\ = O\left(\alpha^{n-2k-2}\ma^{n-2k}\right),\end{multline*}
% and then 
%$$c_{n,k}R_g(x_\alpha) \int_{B_\delta(0)} \left(   \Delta_\xi^{\frac{k-1}{2}} \hat{V}_\alpha\right)^2\,dx -k (\alpha\ma)^2\left(c_U^2\int_{\rn} \Gamma^2\,dx +o(1)\right) = O\left(\frac{\ma^2}{\alpha^2}\right)+O\left(\ma^2\right),$$
and then
$$c_{n,k}R_g(x_\alpha) \int_{B_\delta(0)} \left(   \Delta_\xi^{\frac{k-1}{2}} \hat{V}_\alpha\right)^2\,dx =k (\alpha\ma)^{n-2k}\left(c_U^2\int_{\rn} \Gamma^2\,dx +o(1)\right).$$
This implies that $R_g(x_\alpha)> 0$ for $\alpha\to +\infty$ and yields a contradiction when $R_g\leq 0$. Therefore \eqref{ineq:opt} holds when $n\in\{2k+2,2k+3\}$ and $R_g\leq 0$ everywhere.

\smallskip\noindent{\bf Case 3: $n\geq 2k+4$ and $k=2$.} In this situation, the constraint \eqref{choice:tau} allows to take  $\tau=2$. Noting that $4k=2k+4=8$, \eqref{eq:45} rewrites

%Here, \eqref{est:II} rewrites
%
%\begin{eqnarray*}
%&&c_{n,k}R_g(x_\alpha)\ma^2\left(\int_{\rn} \left(   \Delta_\xi^{\frac{k-1}{2}} U\right)^2\,dX+o(1)\right) -k \alpha^{2k}\int_{B_{\delta}(0)}  \left(1+O(|x|)\right) \hua^2\, dx \\
%&&= O\left(\begin{array}{cc}
%\alpha^2\ma^4 &\hbox{ if }n> 2k+4,\\
%\alpha^2\ma^4\ln\frac{1}{\alpha\ma} &\hbox{ if }n = 2k+4.
%\end{array}\right).
%\end{eqnarray*}
%Since $\alpha\ma\to 0$ as $\alpha\to +\infty$, we get that
%$$(R_g(x_\alpha)+o(1))\ma^2=k \alpha^{2k}\int_{B_{\delta}(0)}  \left(1+O(|x|)\right) \hua^2\, dx,$$
%and then $R_g(x_\alpha)\geq o(1)$, so that $R_g(x_\infty)\geq 0$ where $x_\infty=\lim_{\alpha\to +\infty} x_\alpha$. We then get a contradiction if $R_g$ is negative. Therefore \eqref{ineq:opt} holds when $n\geq 2k+4$ and $R_g< 0$ everywhere.

%\smallskip\noindent When $k=2$, we can be a bit more precise. 

\begin{equation*}
c_{n,k}R_g(x_\alpha)\ma^2\left(\|\Delta_\xi^{\frac{k-1}{2}} U\|_{L^2}^2+o(1)\right) =\left\{\begin{array}{cc}
(\alpha\ma)^{4}\left(\int_{\rn} U^2\,dx +o(1)\right)&\hbox{ if }n>8\\
(c_n^{(2)}+o(1))(\alpha\ma)^{4}\ln\frac{1}{\alpha\ma}  &\hbox{ if }n=8
\end{array}\right.\end{equation*}
for some $c_n^{(2)}>0$. This is   again a contradiction when $R_g\leq 0$. Therefore \eqref{ineq:opt} holds when $k=2$, $n\geq 8$ and $R_g\leq 0$ everywhere.

\medskip\noindent The validity assertions of Theorems \ref{th:ok:Iopt:bis} and  \ref{th:ok:noIopt:bis} are consequences of these four cases except for the case of the flat torus.

\subsection{Validity of the optimal inequality on the flat torus}
Assume that \eqref{ineq:opt} is not valid on the flat torus $(\tn, g_{\tn})$, $n>2k$. With the same notations as above, there is no need to perform a conformal change and we take $g_p:=g_{\tn}$ with $\varphi_p\equiv 1$ for all $p\in\tn$. Via the exponential chart, the pull-back metric of $g_{\tn}$ is the Euclidean metric $\xi$. Therefore, \eqref{eq:hua} writes $\Delta_\xi^k\hua+\alpha^{2k} \hua=|\hua|^{\crit-2}\hua$  in $B_\delta(0)$. The   Pohozaev-Pucci-Serrin identity $III_\alpha-II_\alpha= IV_\alpha$ rewrites
$$\alpha^{2k}\int_{B_{\delta}(0)}  \hua  T(\hua)\, dx=II_\alpha=-IV_\alpha$$
since $III_\alpha=0$. As above, we have that $IV_\alpha=O\left(\frac{\ma^{n-2k}}{\alpha^{2q}}\right)$ and 
$$II_\alpha=\alpha^{2k}\left(-k\int_{B_{\delta}(0)}  \left(1+O(|x|)\right) \hua^2\, dx+O\left(\frac{\ma^{n-2k}}{\alpha^{2q}}\right)\right)$$
therefore, we get that $\int_{B_{\delta}(0)}  \left(1+O(|x|)\right) \hua^2\, dx=O\left(\frac{\ma^{n-2k}}{\alpha^{2q}}\right)$. This equality contradicts \eqref{est:udeux}.  So \eqref{ineq:opt} holds on the flat torus when $n> 2k$.

\medskip\noindent This proves the validity assertion of Theorem \ref{th:ok:noIopt:bis} on the flat torus.

% \section{Approximation near a bubble up to the Kernel}\label{sec:part1}
% %\subsection{Euclidean and Riemannian bubbles.}
% %Let $u \in \hSob(\rr^n)$ be a solution to the critical equation in $\rr^n$, 
% %\begin{equation}\label{eq:critRn}
% %    \Dg[\xi]^k u = |u|^{\crit-2} u,
% %\end{equation}
% %where $\xi$ is the Euclidean metric and $\Dg[\xi] := -\sum_{i=1}^n \partial_i^2$. If $u$ is positive, \cite{WeiXu99} showed that $u$ is equal, up to translations and dilations, to the positive Euclidean bubble
% %\[  \eBub(y) = \big(1 + \pk |y|^2\big)^{-\frac{n-2k}{2}},
% %    \]
% %where $\pk = \bpr{\Pi_{m=-k}^{k-1}(n+2m)}^{-\frac{1}{k}}$. The bubble is an extremal for the critical Sobolev inequality on $\rr^n$, and satisfies 
% %\[  \int_{\rr^n} \eBub^\crit\, dy = K(n,k)^{-\frac{n}{2k}}.
% %    \]
% %Moreover, when $u$ is a sign-changing solution to \eqref{eq:critRn}, it follows from \cite[Lemma 5]{GeWeiZhou11} that 
% %\begin{equation}\label{eq:Ldeusign}
% %    \int_{\rr^n} |u|^\crit\, dy \geq 2 K(n,k)^{-\frac{n}{2k}}.
% %    \end{equation}
% %%We now let $(M,g)$ be a compact Riemannian manifold without boundary, of dimension $n>2k$.
% Let $\TBub$ be defined as in \eqref{def:TBub}. Straightforward computations show that there exists $C>0$ such that 

\section{Pointwise estimates}\label{sec:part3}
Let $\TBub$ be defined as in \eqref{def:TBub}. In this section, we prove the existence and uniqueness of a perturbation term $\phi_\a$ such that $V_\a + \phi_\a$ is a solution of \eqref{eq:ua:th22} up to kernel elements. We obtain precise pointwise estimates on $\phi_\a$ and show that it is small compared to $\TBub$ in a precise sense, in Theorem \ref{prop:unicC0}, which is the main result of this section. Before stating the Theorem, we need some preliminary results and definitions.

Straightforward computations show that there exists $C>0$ such that 
\begin{multline}\label{eq:estRV}
    \abs{(\Dg^k + \a^{2k}) \TBub(x) - \TBub(x)^{\crit-1}} \leq C\Bub_{\zm}(x)\a^{2} (\mu + \dg{z,x})^{2-2k} {\bf 1}_{\a\dg{z,x} \leq 2}.
%    \\
%        \times \begin{cases}
%            \a^{2} (\mu + \dg{z,x})^{2-2k} & \text{when } \a\dg{z,x} \leq 2\\
%           % \a^{2k} & \text{when } 1 \leq \a\dg{z,x} \leq 2\\
%            0 & \text{when } \a\dg{z,x} > 2
%        \end{cases}.
\end{multline}
We also compute that
\begin{equation}\label{eq:RV0Hk}
    \Lnorm[(M)]{\frac{2n}{n+2k}}{(\Dg^k + \a^{2k}) \TBub - \TBub^{\crit-1}} \leq C \sigma_\alpha,
\end{equation}
for some $C>0$, where
$$\sigma_\alpha:=\left\{\begin{array}{cc}
(\alpha \ma)^2 &\hbox{ if }n>2k+4\\
(\alpha\ma)^2\ln\frac{1}{\alpha\ma}&\hbox{ if }n=2k+4\\
(\alpha \ma)^\frac{n-2k}{2} &\hbox{ if }n<2k+4
\end{array}\right. .$$
%
%\subsection{Kernel elements.}
We define $\Ker[] := \{h \in \hSob(\rr^n) \,:\, \Dg[\xi]^k h = (\crit-1) \eBub^{\crit-2} h\}$. By Bartsch-Weth-Willem \cite{BarWetWil03}, $\Ker[]$ is spanned by the $n+1$ linearly independent functions 
\begin{equation*}  
    \Ze{0}(y) := y\cdot \nabla\eBub(y) + \frac{n-2k}{2} \eBub(y)\hbox{ and }
    \Ze{j}(y) := \frac{\partial\eBub}{\partial y_j}(y) \qquad j = 1,\ldots, n
\end{equation*}
for $y \in \rr^n$. These correspond to the derivatives of $\eBub$ with respect to the parameters $\mu,z$ under which equation $\Delta_\xi^kV=V^{\crit-1}$ is invariant. It holds that for all $y\in \rr^n$,
\begin{equation}\label{eq:estimZe}  
    |\nabla_\xi^l \Ze{j}(y)| \leq C(1+|y|)^{2k-n-l} \qquad j = 0,\ldots , n, \quad l\geq 0.
\end{equation}
\begin{definition}
    Let $\a \geq \delta^{-1}$ and $\rho\leq 1$, $\pa_0 =(\zm[0])\in \param$, and let $\chi \in \Cct(\rr_{\geq 0})$ be such that $\chi \equiv 1$ in $[0,1)$ and $\chi \equiv 0$ in $(2,+\infty)$. We define 
    \begin{equation*}
        \Zeda[\pa_0]{0} := \mu_0 \bpr{\frac{\partial\TBub}{\partial\mu}}\rvert_{z=z_0,\, \mu=\mu_0}\hbox{ and }       \Zeda[\pa_0]{j} := \mu_0 \bpr{\frac{\partial\TBub}{\partial z_j}}\rvert_{z=z_0,\, \mu=\mu_0} 
    \end{equation*}
  for  $j=1,\ldots, n$.  By analogy with the Euclidean case, we also let 
    \begin{equation}\label{def:ker}
        \Kera[\pa_0] := \operatorname{span}\left\{\Zeda[\pa_0]{j}, \, 0\leq j \leq n\right\} \sub \Sob(M).
    \end{equation}
\end{definition}
In the spirit of  \eqref{def:TBub}, given $W\in D_k^2(\rn)$, we define
 \begin{equation*}%\label{def:transfo:W}  
        {\mathcal T}_{\alpha,\nu}(W)(x) := \chi(\a d_{g_z}(z,x))\, \varphi_z(x)\, \mu^{-\frac{n-2k}{2}} W\Big(\tfrac{1}{\mu}(\exp_z^{g_z})^{-1}(x)\Big)
        \end{equation*}
for all $x \in M$. For instance, $\TBub={\mathcal T}_{\alpha,\nu}(U)$. Standard computations yield $\Zeda[\pa_0]{j} =-{\mathcal T}_{\alpha,\nu}(Z^j)+o(1)$ for all $ j=0,...,n$
in $H_k^2(M)$. As in \eqref{eq:RV0Hk}, we obtain 
\begin{equation}\label{eq:RZ0Hk}
    \Lnorm[(M)]{\frac{2n}{n+2k}}{(\Dg^k + \a^{2k}) \Zeda{j} - (\crit-1)\TBub^{\crit-2}\Zeda{j}} \leq C \sigma_\alpha,
\end{equation}
for $j=0,\ldots, n$. We also have the following (see \cite{Car24}).
\begin{proposition}\label{prop:Zjort}
    Let $(\rho_i)_i, (\a_i)_i \in \rr_{>0}$ be such that $\a_i \geq \delta^{-1}$ and $\rho_i \to 0$ as $i\to \infty$, and $\pa_i = (\zm[i]) \in \param[i]$. Then for $j,j' \in \{0,\ldots ,n\}$, we get 
    \[  \int_M{(\Dg^k + \a_i^{2k}) \Zed[i]{j} \, \Zed[i]{j'}} \, dv_g\xrightarrow{i\to \infty} \delta_{j\,j'}\int_{\rn}(\Delta_\xi^{\frac{k}{2}}\Ze{j})^2\, dx=\delta_{j\,j'}\Hnorm{k}{\Ze{j}}^2.
        \]
\end{proposition}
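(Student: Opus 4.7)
The plan is to decompose the integrand using the near-equation for $Z^j_{\nu_i}$, then rescale to the Euclidean setting and apply dominated convergence, finally using orthogonality relations between the Euclidean kernel functions.

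First I would write
\[
\int_M (\Dg^k+\a_i^{2k}) \Zed[i]{j}\, \Zed[i]{j'}\,dv_g = (\crit-1)\int_M V_{\nu_i}^{\crit-2}\Zed[i]{j}\Zed[i]{j'}\,dv_g + \int_M \Psi_i\,\Zed[i]{j'}\,dv_g,
\]
where $\Psi_i := (\Dg^k+\a_i^{2k})\Zed[i]{j} - (\crit-1) V_{\nu_i}^{\crit-2}\Zed[i]{j}$. By \eqref{eq:RZ0Hk}, $\|\Psi_i\|_{L^{2n/(n+2k)}} \leq C\sigma_{\a_i}\to 0$, and the $Z^{j'}_{\nu_i}$ are uniformly bounded in $L^{\crit}(M)$ (by a rescaling argument using \eqref{eq:estimZe}). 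Hölder with the dual exponents $\frac{n+2k}{2n} + \frac{n-2k}{2n}=1$ shows the error term goes to $0$.

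For the main term, pass to normal coordinates on $M$ at $z_i$ in the metric $g_{z_i}$ by setting $x=\exp^{g_{z_i}}_{z_i}(\mu_i y)$. Because $\a_i\mu_i\to 0$, the cutoff $\chi(\a_i d_{g_{z_i}}(z_i,x))$ equals $1$ on any fixed compact set in $y$, the metric $\mu_i^{-2}(\exp^{g_{z_i}}_{z_i})^*g_{z_i}$ converges to $\xi$ in $C^\infty_{loc}$, and $\varphi_{z_i}\to 1$. Together with the standard identity $\mu^{-(n-2k)/2}V_{\nu}(\exp_{z}^{g_z}(\mu\cdot))=U(\cdot)$ and the analogous identity for $Z^j_{\nu_i}$ after rescaling (converging to $-Z^j$), the rescaled integrand converges pointwise to $U^{\crit-2} Z^j Z^{j'}$. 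The bound $|Z^j(y)|\leq C(1+|y|)^{2k-n}$ from \eqref{eq:estimZe} and $U^{\crit-2}(y)\leq C(1+|y|)^{-4k}$ give an integrable majorant on $\rn$ for all $n>2k$, so dominated convergence and the Riemannian volume expansion yield
\[
(\crit-1)\int_M V_{\nu_i}^{\crit-2}\Zed[i]{j}\Zed[i]{j'}\,dv_g \longrightarrow (\crit-1)\int_{\rn} U^{\crit-2} Z^j Z^{j'}\,dx.
\]

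Finally, since $\Delta_\xi^k Z^j = (\crit-1)U^{\crit-2} Z^j$ in $\rn$ with $Z^j\in \hSob(\rn)$, integration by parts gives
\[
(\crit-1)\int_{\rn} U^{\crit-2} Z^j Z^{j'}\,dx = \int_{\rn} \Delta_\xi^{k/2} Z^j \cdot \Delta_\xi^{k/2} Z^{j'}\,dx.
\]
For $j=j'$ this is $\|Z^j\|_{D_k^2}^2$. For $j\neq j'$, the orthogonality is read off from symmetry: $Z^0$ is radial while $Z^l=\partial_{y_l}\eBub$ ($l\geq 1$) is odd in $y_l$, hence $U^{\crit-2}Z^0 Z^l$ is odd in $y_l$; and for $1\leq l\neq l'\leq n$ the product $U^{\crit-2}\partial_{y_l}U\,\partial_{y_{l'}}U$ is odd in $y_l$. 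In either case the integral vanishes, giving $\delta_{jj'}\|Z^j\|_{D_k^2}^2$.

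The only nontrivial step is checking the uniform $L^\crit$ bound on $Z^{j'}_{\nu_i}$ and dominating the rescaled integrand near the boundary of the support (where the cutoff $\chi(\a_i d_{g_{z_i}}(z_i,\cdot))$ transitions); but this region corresponds to $|y|\sim(\a_i\mu_i)^{-1}\to\infty$, where the decay \eqref{eq:estimZe} makes the contribution vanish.
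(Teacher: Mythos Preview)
Your proof is correct and follows exactly the approach one finds in \cite{Car24}, which the paper simply cites rather than reproducing: use \eqref{eq:RZ0Hk} and H\"older to discard the error term, rescale the main term $(\crit-1)\int_M V_{\a_i,\nu_i}^{\crit-2}\Zed[i]{j}\Zed[i]{j'}\,dv_g$ to the Euclidean setting via $x=\exp_{z_i}^{g_{z_i}}(\mu_i y)$, apply dominated convergence with the majorant $C(1+|y|)^{-2n}$, and identify the limit through the linearized equation $\Delta_\xi^k Z^j=(\crit-1)U^{\crit-2}Z^j$ together with parity. One small remark: the uniform pointwise domination of the rescaled integrand requires checking that the additional terms in $\Zed[i]{l}=\mu_i\partial_{z_l}V_{\a_i,\nu_i}$ coming from differentiating $\chi$, $\varphi_z$, and $g_z$ are also controlled by $C(1+|y|)^{2k-n}$; this is routine (each extra term carries a factor $\mu_i$ against a bounded derivative and is supported where $|y|\leq 2(\a_i\mu_i)^{-1}$), but strictly speaking \eqref{eq:estimZe} bounds only the Euclidean $Z^j$, not the rescaled $\Zed[i]{j}$ directly.
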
 

%\subsection{Uniqueness of solutions near a bubble}\label{sec:part2}
\medskip\noindent We first state existence and uniqueness of solution to \eqref{eq:ua:th22} 
up to terms in $\Ker$, of the shape $u = \TBub + \phi_{\zma}$, for $\phi_{\zma} \in \Ker^\perp$ small. In the sequel, $\proKe$ denotes the projection onto $\Ker^\perp$ in $\Sob(M)$ with respect to the   scalar product  $\vprod[\Sob(M)]{u}{v} := \sum_{l=0}^k\intM{\vprod[]{\Dg^{l/2} u}{\Dg^{l/2}v}}$. For $\a \geq \delta^{-1}$, $\rho\leq 1$, and $\pa \in \param$, we define the  operator $\Lia : \Ker^\perp \to \Ker^\perp$ as
\[  \Lia \phi := \proKe\bsq{\phi - (\Dg^k + \a^{2k})^{-1} \bpr{(\crit-1)\TBub^{\crit-2}\phi}}
    \]
Following the same arguments as in \cites{Pre22, Pre24,Car24}, we get the two following propositions:
\begin{proposition}\label{prop:invertLHk}
    There exists $\rho_0>0$ and $C_0>0$ such that, for all $\a \geq \delta^{-1}$, $\rho \leq \rho_0$, and $\pa \in \param$, $\Lia$ is invertible and 
    \[  \frac{1}{C_0} \Snorm{k}{\phi} \leq \Snorm{k}{\Lia \phi} \leq C_0 \Snorm{k}{\phi} \qquad \forall ~\phi \in \Ker^\perp. 
        \]
\end{proposition}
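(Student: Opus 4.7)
The plan is the standard two-step Lyapunov--Schmidt linearization argument: a soft upper bound for $\Lia$ and a more delicate contradiction argument for the lower bound, which, together with a Fredholm-alternative observation, yields invertibility and the uniform coercivity.

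The upper bound $\Snorm{k}{\Lia \phi} \leq C_0\Snorm{k}{\phi}$ is routine. Indeed, since $\proKe$ has operator norm $1$, it suffices to control $(\Dg^k+\a^{2k})^{-1}((\crit-1)\TBub^{\crit-2}\phi)$ in $\Sob(M)$. The multiplication by $(\crit-1)\TBub^{\crit-2}$ maps $L^{\crit}(M)$ to $L^{\frac{2n}{n+2k}}(M)$ uniformly in $\pa$, with norm controlled by $\Lnorm[(M)]{\crit}{\TBub}^{\crit-2} = O(1)$; combined with the $H_k^2$--$L^{\crit}$ Sobolev embedding and elliptic regularity for $(\Dg^k+\a^{2k})^{-1}: L^{\frac{2n}{n+2k}}(M) \to \Sob(M)$ (with norm bounded uniformly in $\a \geq \delta^{-1}$, as follows from the Green function analysis of the last section), this gives the desired bound.

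The lower bound is the heart of the proof and will be proved by contradiction. Assume there exist sequences $\a_i \geq \delta^{-1}$, $\rho_i \to 0$, $\pa_i = (\zm[i]) \in \param[i]$, and $\phi_i \in \Keri[i]^\perp$ with $\Snorm{k}{\phi_i} = 1$ yet $\Snorm{k}{\Lia[i]\phi_i} \to 0$. Then there exist coefficients $\lambda_{i,j} \in \rr$ (dual to the projection $\proKi[i]$) such that
\[
\phi_i - (\Dg^k+\a_i^{2k})^{-1}\bpr{(\crit-1)\TBub[i]^{\crit-2}\phi_i} = \sum_{j=0}^n \lambda_{i,j} \Zed[i]{j} + o(1) \qquad \text{in } \Sob(M).
\]
Testing this identity against $\Zed[i]{j'}$ and using Proposition \ref{prop:Zjort} together with \eqref{eq:RZ0Hk} shows that $\lambda_{i,j} \to 0$ for each $j$. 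The rescaled functions $\tilde\phi_i(y) := \mu_i^{\frac{n-2k}{2}} \phi_i(\exp_{z_i}^{g_{z_i}}(\mu_i y))$ are bounded in $\hSob(\rn)$, so up to a subsequence converge weakly to some $\tilde\phi_\infty \in \hSob(\rn)$. Passing the equation above to the limit (the nonlinear weight concentrates, and the remainder term vanishes thanks to the elliptic estimate of the Green function and the fact that $\a_i\mu_i \to 0$), $\tilde\phi_\infty$ is a weak solution of $\Dg[\xi]^k \tilde\phi_\infty = (\crit-1)\eBub^{\crit-2}\tilde\phi_\infty$ in $\rn$, so $\tilde\phi_\infty \in \Ker[]$. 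The orthogonality $\phi_i \in \Keri[i]^\perp$ passes to the limit using the explicit form of the $\Zeda{j}$ and Proposition \ref{prop:Zjort}, forcing $\tilde\phi_\infty \equiv 0$. With $\tilde\phi_\infty = 0$, the nonlinear term $(\crit-1)\TBub[i]^{\crit-2}\phi_i$ tends to $0$ strongly in $L^{\frac{2n}{n+2k}}(M)$ (the integrand is supported where $V_{\pa_i}^{\crit-2}$ concentrates, and in rescaled coordinates the mass escapes by weak convergence to zero). Coming back to the equation and using the Green function estimate of $(\Dg^k+\a_i^{2k})^{-1}$ again, one deduces $\Snorm{k}{\phi_i} \to 0$, a contradiction with $\Snorm{k}{\phi_i} = 1$.

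The hard part will be the last step of the contradiction argument, namely promoting the information $\tilde\phi_\infty \equiv 0$ (which is only weak convergence in rescaled coordinates) into a strong norm decay of $\phi_i$ in $\Sob(M)$. This is essentially a concentration-compactness statement: the full $\Sob(M)$ norm of $\phi_i$ must concentrate at the sole blow-up point $z_i$ at scale $\mu_i$, since away from this point the operator $(\Dg^k + \a_i^{2k})^{-1}$ with the nonlinear weight $\TBub[i]^{\crit-2}$ is uniformly subcritical, and near the point the rescaled weak limit is zero. Once the lower bound $\Snorm{k}{\Lia\phi} \geq C_0^{-1}\Snorm{k}{\phi}$ is secured, injectivity of $\Lia$ on $\Ker^\perp$ is immediate; surjectivity, and hence invertibility with the quantitative two-sided estimate, follows from the Fredholm alternative since $\Lia$ is a compact perturbation of the identity.
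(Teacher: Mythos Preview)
Your plan is correct and matches the standard argument to which the paper appeals: the paper does not give its own proof of Proposition~\ref{prop:invertLHk} but simply refers to \cites{Pre22,Pre24,Car24}, and what you outline is precisely the Lyapunov--Schmidt linearization scheme developed in those references (soft upper bound; contradiction and rescaling for the lower bound, with the approximate kernel elements controlled via Proposition~\ref{prop:Zjort} and \eqref{eq:RZ0Hk}; Fredholm alternative for invertibility). Two minor sharpenings you may want to make explicit when writing it up: (i) when testing against $\Zed[i]{j'}$ it is cleanest to first apply $(\Dg^k+\a_i^{2k})$ and pair in $L^2$, so that \eqref{eq:RZ0Hk} and Proposition~\ref{prop:Zjort} apply directly; (ii) for the Fredholm step, note that for \emph{fixed} $(\a,\pa)$ the potential $\TBub^{\crit-2}$ is bounded, so $\phi\mapsto(\Dg^k+\a^{2k})^{-1}\big((\crit-1)\TBub^{\crit-2}\phi\big)$ factors through the compact embedding $\Sob(M)\hookrightarrow L^p(M)$ for some $p<\crit$ and is therefore compact on $\Sob(M)$, which justifies the Fredholm alternative.
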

%The proof is standard and follows the same arguments as in \cites{Pre22, Pre24,Car24}.
\begin{proposition}\label{prop:unicHk}
    There exists $R_0 >0$, $\rho_0>0$ such that for all $\a \geq \delta^{-1}$, $\rho\leq \rho_0$, and $\pa \in \param$, there exists a unique solution $ \phi_{\zma} \in \Ker^\perp \cap \{\phi \in \Sob(M), \, \Snorm{k}{\phi} \leq R_0\}$    to 
    \begin{equation*}%\label{eq:modcritMHk}
        \proKe\bpr{\TBub + \phi_{\zma} - (\Dg^k + \a^{2k})^{-1} \bpr{|\TBub + \phi_{\zma}|^{\crit-2}(\TBub+\phi_{\zma})}} = 0.
    \end{equation*}
\end{proposition}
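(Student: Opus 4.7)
The plan is a Lyapunov--Schmidt reduction: recast the projected equation as a Banach fixed-point problem for $\phi$ in a small $\Sob(M)$-ball of $\Ker^\perp$. Setting $f(t):=|t|^{\crit-2}t$ and
$$N_\pa(\phi) := f(\TBub+\phi) - f(\TBub) - (\crit-1)\TBub^{\crit-2}\phi, \qquad R_{\a,\pa} := \proKe\bsq{\TBub - (\Dg^k+\a^{2k})^{-1}\TBub^{\crit-1}},$$
and using $\proKe\phi=\phi$, a direct algebraic manipulation rewrites the equation in the statement as
$$\Lia \phi = -R_{\a,\pa} + \proKe\,(\Dg^k+\a^{2k})^{-1} N_\pa(\phi).$$
By Proposition \ref{prop:invertLHk} there exist $\rho_0,C_0>0$ such that $\Lia$ is invertible on $\Ker^\perp$ with $\norm{\Lia^{-1}} \leq C_0$ uniformly for $\pa\in\param$ and $\rho\leq\rho_0$; the problem becomes the fixed-point equation $\phi = T_{\a,\pa}(\phi)$, where $T_{\a,\pa}(\phi):=\Lia^{-1}\bpr{-R_{\a,\pa}+\proKe(\Dg^k+\a^{2k})^{-1}N_\pa(\phi)}$.

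Two ingredients drive the contraction. First, $\Dg^k + \a^{2k}$ is uniformly coercive on $\Sob(M)$ for $\a\geq \delta^{-1}$; by duality with the Sobolev embedding $\Sob(M)\hookrightarrow L^{\crit}(M)$, its inverse is bounded from $L^{\frac{2n}{n+2k}}(M)$ into $\Sob(M)$ with constant independent of $\a$. Together with \eqref{eq:RV0Hk} this yields $\Snorm{k}{R_{\a,\pa}} \leq C\sigma_\alpha$, which tends to $0$ as $\a\mu\to 0$. Second, from the pointwise bound $|N_\pa(\phi)|\leq C(\TBub^{\crit-3}\phi^2+|\phi|^{\crit-1})$ when $\crit-1\geq 2$, or $|N_\pa(\phi)|\leq C|\phi|^{\crit-1}$ when $\crit-1<2$, combined with H\"older's inequality and the same Sobolev embedding, one obtains
$$\Lnorm[(M)]{\frac{2n}{n+2k}}{N_\pa(\phi)} \leq C\Snorm{k}{\phi}^{\min(2,\crit-1)}+C\Snorm{k}{\phi}^{\crit-1},$$
and an analogous bound controlling $N_\pa(\phi_1)-N_\pa(\phi_2)$ in terms of $\Snorm{k}{\phi_1-\phi_2}$. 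Applying $\proKe(\Dg^k+\a^{2k})^{-1}$ and $\Lia^{-1}$ transfers these estimates to $\Sob(M)$ uniformly in $\pa$ and $\a$.

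Choose first $R_0>0$ small enough so that the nonlinear contribution in $T_{\a,\pa}$ has Lipschitz constant at most $1/2$ on $\{\Snorm{k}{\phi}\leq R_0\}$, and then shrink $\rho_0$ further so that $C_0\Snorm{k}{R_{\a,\pa}}\leq R_0/2$ whenever $\a\mu\leq \rho_0$. The map $T_{\a,\pa}$ then sends the closed ball $\{\phi\in\Ker^\perp:\Snorm{k}{\phi}\leq R_0\}$ into itself and is a strict contraction, so Banach's fixed-point theorem produces the unique $\phi_{\zma}$ claimed in the statement. The principal subtlety is the high-dimensional regime $n>6k$, where $\crit-1\in(1,2)$ and $N_\pa$ is only \emph{H\"older}-continuous (not locally Lipschitz) as a function of $\phi$; in that range the contraction bound must use the H\"older-type increment
$$\Lnorm[(M)]{\frac{2n}{n+2k}}{N_\pa(\phi_1)-N_\pa(\phi_2)} \leq C\bpr{\Snorm{k}{\phi_1}+\Snorm{k}{\phi_2}}^{\crit-2}\Snorm{k}{\phi_1-\phi_2},$$
which still delivers a genuine contraction because $\crit-2>0$.
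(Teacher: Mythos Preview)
Your proposal is correct and is precisely the standard Lyapunov--Schmidt/Banach fixed-point argument that the paper invokes by reference (``Following the same arguments as in \cite{Pre22,Pre24,Car24}''); the paper does not spell out its own proof of this proposition. Your treatment of the two regimes $\crit-1\ge 2$ and $\crit-1<2$, the uniform bound on $(\Dg^k+\a^{2k})^{-1}:L^{\frac{2n}{n+2k}}\to\Sob(M)$, and the use of \eqref{eq:RV0Hk} to make the source term small are exactly the ingredients used in those references.
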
 
%The proof follows from a fixed-point argument, as found in \cites{Pre22, Pre24,Car24}.

We now obtain precise pointwise estimates on the remainder term $\phi_{\zma}$ given by Proposition \ref{prop:unicHk}. 
Let $p\geq 1$, $\a \geq \delta^{-1}$, $\tau>0$ such that    \eqref{choice:tau} holds.
%, that is
%\begin{equation*}
%0<\tau\leq \min \left\{2,\frac{n-2k}{2}\right\}.
%\end{equation*}
Recall that $\tau<2k$ since $k>1$. For $\pa \in \param$, we define 
\begin{equation*}%\label{def:thet}
    \Tet(x) := \frac{\bpr{\a (\mu + \dg{z,x}) }^{\tau}}{\bpr{ 1+ \a \dg{z,x}}^{p}}.
\end{equation*}
Moreover, we define the following weighted norms, depending on $p,\tau,\alpha,\pa$:
\begin{equation}\label{def:inorms}\begin{aligned} 
    \inorm{u} &:= \Lnorm[(M)]{\infty}{\frac{u}{\Tet \Bub_{\zm}}}\\
    \iinorm{u} &:= \Lnorm[(M)]{\infty}{\frac{u}{\Tet~ (\mu+\dg{z,\cdot})^{-2k}\Bub_{\zm}}}\\
\end{aligned} \qquad \forall~ u \in C^0(M).\end{equation}
\begin{theorem}\label{prop:unicC0}
    Let $(\a_i)_i,\,(\rho_i)_i \in \rr_{>0}$ be such that $\a_i \to \infty$ and $\rho_i \to 0$ as $i\to \infty$, and let $\pa_i = (\zm[i]) \in \param[i]$. Let $p \geq 1$, there exists $\Lambda >0$ and $i_0$ such that for all $i\geq i_0$, there is a unique solution
    \[  \phi_i \in \Keri^\perp \cap \big\{\phi \in C^0(M),\, \inorm{\phi_i} \leq \Lambda\big\},
        \]
    to
    \begin{equation}\label{eq:modcritC0}  
        \proKi\bpr{\TBub[i] + \phi_i - (\Dg^k + \a_i^{2k})^{-1} \bpr{|\TBub[i] + \phi_i|^{\crit-2} (\TBub[i]+ \phi_i)}} = 0,
    \end{equation}
    where $\Keri$ is as defined in \eqref{def:ker}. Moreover, $\phi_i \in C^{2k-1}(M)$ and for any $p \geq 1$ and $\tau$ as in \eqref{choice:tau} with $k>1$, there exists $C>0$ such that for all $l=0,\ldots , 2k-1$, we have 
    \begin{equation}\label{bnd:phi}  (\mu_i + \dg{z_i,x})^l |\nabla_g^l \phi_i(x)| \leq C\, \Tet[i](x)\Bub_{\zm[i]}(x).
        \end{equation}
In particular,  $\Snorm{k}{\phi_i} \to 0$ as $  i \to \infty$.\end{theorem}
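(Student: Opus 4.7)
The plan is to recast equation \eqref{eq:modcritC0} as a fixed-point problem for $\phi$ in the weighted space and apply Banach's contraction principle. Expanding the nonlinearity around $\TBub[i]$, I would write
\[
|\TBub[i]+\phi|^{\crit-2}(\TBub[i]+\phi) = \TBub[i]^{\crit-1} + (\crit-1)\TBub[i]^{\crit-2}\phi + N_i(\phi),
\]
where $N_i(\phi)$ is at least quadratic in $\phi$. Projecting \eqref{eq:modcritC0} then gives
\[
\Lia[i]\phi = \proKi\left[(\Dg^k + \a_i^{2k})^{-1}\TBub[i]^{\crit-1} - \TBub[i] + (\Dg^k + \a_i^{2k})^{-1}N_i(\phi)\right] =: \mathcal F_i(\phi),
\]
and I would look for a fixed point of $\mathcal T_i := \Lia[i]^{-1}\circ \mathcal F_i$ on the closed ball $\{\phi \in \Keri^\perp : \inorm{\phi}\leq \Lambda\}$, for $\Lambda$ large and $i \geq i_0$.

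The central ingredient is a uniform weighted inversion estimate for $\Lia[i]$: there exists $C>0$ independent of $i$ such that $\inorm{\phi}\leq C\,\iinorm{\Lia[i]\phi}$ for every $\phi \in \Keri^\perp$. This is established by contradiction, assuming sequences $\phi_i \in \Keri^\perp$ with $\inorm{\phi_i} = 1$ and $\iinorm{\Lia[i]\phi_i} = o(1)$. Using the Green's representation formula for $\Dg^k + \a_i^{2k}$, whose pointwise decay is the subject of the last section of the paper, one writes $\phi_i$ as a convolution of the right-hand side against the Green's kernel, picks a point where the weighted sup is attained, and rescales at scale $\mu_i$ around $z_i$. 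Since $\a_i\mu_i \to 0$, the limiting profile $\tilde\phi_\infty$ satisfies $\Dg[\xi]^k \tilde\phi_\infty = (\crit-1)\eBub^{\crit-2}\tilde\phi_\infty$ on $\rn$, lies in $\Ker[]$, and inherits the orthogonality $\tilde\phi_\infty \perp \Ker[]$ through passage to the limit via Proposition \ref{prop:Zjort}, forcing $\tilde\phi_\infty\equiv 0$. A parallel analysis in the far-field region $\a_i\dg{z_i,\cdot}\geq R$ uses the sharp decay of the Green's function --- precisely what allows the factor $(1+\a_i\dg{z_i,\cdot})^{-p}$ in $\Tet[i]$ to be absorbed --- to close the contradiction.

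With the weighted inversion in hand, \eqref{eq:estRV} combined with the Green's function bounds yields $\iinorm{(\Dg^k + \a_i^{2k})^{-1}\TBub[i]^{\crit-1} - \TBub[i]} = o(1)$, while $N_i$ satisfies a Lipschitz-type estimate
\[
\iinorm{(\Dg^k + \a_i^{2k})^{-1}(N_i(\phi) - N_i(\psi))} \leq C(\inorm{\phi}+\inorm{\psi})^{\min(1,\crit-2)}\inorm{\phi-\psi}.
\]
These two facts make $\mathcal T_i$ a contraction on the ball for $\Lambda$ large and $i$ large, and Banach's theorem produces the unique $\phi_i$. The derivative estimates for $l=1,\ldots,2k-1$ in \eqref{bnd:phi} then follow by reinjecting $\phi_i$ into its integral representation and differentiating the Green's kernel $l$ times, relying once again on the final section. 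Consistency with Proposition \ref{prop:unicHk} comes for free: integrating \eqref{bnd:phi} shows $\Snorm{k}{\phi_i}\to 0$, so for $i$ large $\phi_i$ falls within the $\Sob$-uniqueness ball of Proposition \ref{prop:unicHk} and must coincide with the $H_k^2$-solution given there.

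The hardest step is the weighted invertibility of $\Lia[i]$. The weight $\Tet[i]\,\Bub_{z_i,\mu_i}$ interpolates between the bubble scale $\mu_i$ near $z_i$ and the far-field scale $\a_i^{-1}$ away from it, and the contradiction argument has to control both regimes simultaneously. This is where the sharp decay estimates of the Green's function for $\Dg^k + \a^{2k}$ --- in particular the rapid decay at scale $\a^{-1}$ specific to diverging coefficients --- become decisive, and where the precise exponents $p,\tau$ in $\Tet$ have to be finely calibrated.
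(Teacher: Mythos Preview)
Your overall strategy---Banach fixed point in the weighted $\inorm{\cdot}$ space, resting on a weighted linear estimate proved by blow-up contradiction---is exactly the paper's. The linear step is Proposition \ref{prop:estlinC0} and the nonlinear step follows it closely. A few points need tightening.

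First, you systematically apply $\iinorm{\cdot}$ to objects living at the $\phi$-level. The two norms in \eqref{def:inorms} are not interchangeable: $\inorm{\cdot}$ measures solutions, while $\iinorm{\cdot}$ (with the extra $(\mu+d_g)^{-2k}$) measures right-hand sides of the PDE. The correct linear estimate is not $\inorm{\phi}\leq C\iinorm{\Lia[i]\phi}$ but rather: if $\phi\in\Keri^\perp$ solves $(\Dg^k+\a_i^{2k})\phi-(\crit-1)\TBub[i]^{\crit-2}\phi=R+\sum_j\lambda^j(\Dg^k+\a_i^{2k})\Zed[i]{j}$, then $\inorm{\phi}\leq C_0\iinorm{R}$. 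Likewise the quantities you bound---$\iinorm{(\Dg^k+\a_i^{2k})^{-1}\TBub[i]^{\crit-1}-\TBub[i]}$ and $\iinorm{(\Dg^k+\a_i^{2k})^{-1}(N_i(\phi)-N_i(\psi))}$---should instead be $\iinorm{(\Dg^k+\a_i^{2k})\TBub[i]-\TBub[i]^{\crit-1}}$ and $\iinorm{N_i(\phi)-N_i(\psi)}$, i.e.\ the $\iinorm{\cdot}$ norm of the source terms themselves.

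Second, once the norms are placed correctly, the error term is \emph{bounded}, not $o(1)$: \eqref{eq:estRV} gives $\iinorm{(\Dg^k+\a_i^{2k})\TBub[i]-\TBub[i]^{\crit-1}}\leq C_1$ with $C_1$ fixed. The ball radius is then $\Lambda=2C_0C_1$, and the contraction constant comes entirely from the nonlinear term, which satisfies $\iinorm{N_i(\phi_1)-N_i(\phi_2)}\leq \epsilon_i\inorm{\phi_1-\phi_2}$ with $\epsilon_i=O((\a_i\mu_i)^{\tau\theta})\to 0$.

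Third, no separate far-field analysis is needed in the contradiction argument. The Green's representation together with Lemma \ref{lem:giraud} gives directly the a priori bound \eqref{eq:1stestphi}, and evaluating it at the point where $|\phi_i|/(\Tet[i]\Bub_{\zm[i]})$ is maximal forces that point to lie at bubble scale $d_g(z_i,y_i)=O(\mu_i)$; the decay of the Green's function at scale $\a_i^{-1}$ is already absorbed in that single estimate.
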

To prove Theorem \ref{prop:unicC0}, we first need to study the linearized version of the equation. 

\subsection{Linear blow-up analysis.} 
The purpose of this section is to prove a uniform invertibility result analog to Proposition \ref{prop:invertLHk}, but now in weighted pointwise spaces. Let $\rho\leq \rho_0$ be given by Proposition \ref{prop:invertLHk}, and $\a\geq \delta^{-1}$, $\pa\in \param$. Then for all $R \in C^0(M)$, there exists a unique function $\phi \in \Ker^\perp$ and a unique family $\{\lambda^j \in \rr, \, j=0,\ldots, n\}$, such that 
\begin{equation}\label{eq:modlin}
    (\Dg^k + \a^{2k})\phi - (\crit-1)\TBub^{\crit-2}\phi = R + \sum_{j=0}^n \lambda^j (\Dg^k+ \a^{2k})\Zed{j}.
\end{equation}
Moreover, the $\Sob(M)$ norm of $\phi$ is controlled by $\Vert R\Vert_{L^{\frac{2n}{n+2k}}(M)}$. This can be improved : When $R$ has suitable pointwise bounds, $\phi$ inherits better pointwise estimates too. 
\begin{proposition}\label{prop:estlinC0}
    Let $p\geq 1$, there exists $\rho_0>0$, $\a_0\geq \delta^{-1}$ and $C_0>0$ such that, for all $\a\geq \a_0$, $\rho \leq \rho_0$, and $\pa \in \param$, the following holds. Let $R \in C^0(M)$, and $\phi \in \Ker^\perp$ be the unique solution to \eqref{eq:modlin} given by Proposition \ref{prop:invertLHk}. Then $\phi \in C^0(M)$ and $ \inorm{\phi} \leq C_0\, \iinorm{R}$, where $\inorm{\cdot}$ and $\iinorm{\cdot}$ are as defined in \eqref{def:inorms}.
\end{proposition}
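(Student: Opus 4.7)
The plan is a contradiction argument in the style of Premoselli-Robert \cite{PreRob} and Carletti \cite{Car24}, combining a blow-up rescaling near the bubble with a Green's function representation away from it. I would assume the conclusion fails, producing sequences $\a_i \to \infty$, $\rho_i \to 0$, $\pa_i = (\zm[i]) \in \param[i]$, $R_i \in C^0(M)$ and $\phi_i \in \Keri^\perp$ solving \eqref{eq:modlin} with $\inorm{\phi_i} = 1$ and $\iinorm{R_i} \to 0$. A direct integration of the pointwise bound on $R_i$ yields $\Lnorm[(M)]{\frac{2n}{n+2k}}{R_i} = o(1)$, so Proposition \ref{prop:invertLHk} gives $\Snorm{k}{\phi_i} \to 0$; then testing \eqref{eq:modlin} against each $\Zed[i]{j'}$, Proposition \ref{prop:Zjort} together with \eqref{eq:RZ0Hk} forces the Lagrange multipliers to satisfy $\lambda_i^j = o(1)$.

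I would then select $x_i \in M$ with $|\phi_i(x_i)| \geq \frac{1}{2}\Tet[i](x_i)\Bub_{\zm[i]}(x_i)$ and split the analysis according to the location of $x_i$. In the concentration regime $\dg{z_i,x_i} = O(\mu_i)$, the rescaling $\tilde\phi_i(y) := (\a_i\mu_i)^{-\tau}\mu_i^{\frac{n-2k}{2}}\phi_i(\exp_{z_i}^{g_{z_i}}(\mu_i y))$ turns $\inorm{\phi_i}=1$ into a uniform local estimate $|\tilde\phi_i(y)| \leq C(1+|y|)^{\tau-n+2k}$ and the equation into the Euclidean linearized equation $\Delta_\xi^k \tilde\phi_\infty = (\crit-1) \eBub^{\crit-2}\tilde\phi_\infty$ on $\rn$ (here $\a_i\mu_i \to 0$ is crucial). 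Elliptic theory yields $\tilde\phi_i \to \tilde\phi_\infty$ in $C^{2k}_{loc}(\rn)$, and the constraint $\tau \leq (n-2k)/2$ from \eqref{choice:tau} together with the classification of solutions forces $\tilde\phi_\infty \in \Ker[]$. The orthogonality $\vprod[\Sob(M)]{\phi_i}{\Zed[i]{j}} = 0$ passes to the limit using Proposition \ref{prop:Zjort}, giving $\tilde\phi_\infty \perp \Ker[]$, hence $\tilde\phi_\infty \equiv 0$; this contradicts $|\tilde\phi_i(y_i)| \geq c > 0$ whenever $(y_i)$ stays bounded.

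In the exterior regime $\dg{z_i,x_i}/\mu_i \to \infty$, I would invert $(\Dg^k + \a_i^{2k})$ via the Green's function $G_{g,\a_i}$ studied in Section \ref{sec:green:M} and bound each contribution in
\begin{equation*}
    \phi_i(x_i) = \int_M G_{g,\a_i}(x_i,y)\Big[(\crit-1)\TBub[i]^{\crit-2}(y)\phi_i(y) + R_i(y) + \sum_{j=0}^n \lambda_i^j (\Dg^k+\a_i^{2k})\Zed[i]{j}(y)\Big] dv_g(y).
\end{equation*}
The $R_i$ and Lagrange-multiplier contributions yield $o(1)\cdot\Tet[i](x_i)\Bub_{\zm[i]}(x_i)$ thanks to $\iinorm{R_i}=o(1)$, $\lambda_i^j = o(1)$, \eqref{eq:estimZe}, \eqref{eq:RZ0Hk} and the Green's function bounds. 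The $\TBub[i]^{\crit-2}\phi_i$ term is self-absorbing: using $\inorm{\phi_i}\leq 1$ it is dominated by the convolution of $G_{g,\a_i}(x_i,\cdot)\,\TBub[i]^{\crit-2}$ against $\Tet[i]\Bub_{\zm[i]}$, which by a careful partition of $M$ into the bubble zone, the intermediate annulus, and the far region $\dg{z_i,\cdot}\gtrsim\a_i^{-1}$ is shown to be at most $\eta_i\,\Tet[i](x_i)\Bub_{\zm[i]}(x_i)$ for some $\eta_i \to 0$. Summing these bounds contradicts the choice of $x_i$.

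The main obstacle is this last self-absorbing estimate. The weight $\Tet[i]$ is engineered so that its factors $(\mu+\dg{z,\cdot})^\tau$ and $(1+\a\dg{z,\cdot})^{-p}$, together with the bubble profile, each reproduce themselves under the Green's function convolution, but this requires sharp pointwise upper bounds on $G_{g,\a_i}$ that combine the usual polynomial blow-up at the diagonal with the fast $\a_i$-decay at scale $\a_i^{-1}$, uniformly as $\a_i \to \infty$. This is precisely the content of the Green's function analysis of Section \ref{sec:green:M}, on which the whole argument rests. Uniqueness of $\phi$ in $\Ker^\perp \cap C^0$ for fixed $R$ then follows from the $\Sob$-uniqueness in Proposition \ref{prop:invertLHk} applied to the difference of two solutions.
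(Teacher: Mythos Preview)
Your approach is essentially the paper's: contradiction, rescaling near the bubble, Green's representation, and the orthogonality constraint forcing the limit profile to vanish. The paper organizes things a bit more economically by applying the Green's representation \emph{globally} rather than only at the selected point $x_i$, obtaining for every $x\in M$
\[
|\phi_i(x)| \leq C\,\Tet[i](x)\Bub_{\zm[i]}(x) + C\,\inorm{\phi_i}(\a_i\mu_i)^\tau\, \Psp(\a_i d_g(z_i,x))\, \Bub_{\zm[i]}(x).
\]
This single estimate simultaneously (i) forces the maximum of $|\phi_i|/(\Tet[i]\Bub_{\zm[i]})$ to lie at bubble scale and (ii) supplies the sharp decay $(1+|y|)^{2k-n}$ for the rescaled limit $\psi_\infty$, so no case split into concentration/exterior regimes is needed.

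There is one point to tighten in your exterior argument: the bound $\lambda_i^j = o(1)$ is not sufficient. When $\mu_i \ll d_g(z_i,x_i) \ll \a_i^{-1}$ the weight $\Tet[i](x_i)\sim (\a_i d_g(z_i,x_i))^\tau$ is itself $o(1)$, so a contribution of size $o(1)\cdot \Bub_{\zm[i]}(x_i)$ is not automatically $o\big(\Tet[i](x_i)\Bub_{\zm[i]}(x_i)\big)$. What is actually needed is $\lambda_i^j = O\big((\a_i\mu_i)^\tau\big)$; this follows from the same testing against $\Zed[i]{j}$ once you estimate $\int_M |R_i||\Zed[i]{j}|\,dv_g$ and $\|\phi_i\|_{L^{2^\star}}$ via the pointwise controls $\inorm{\phi_i}\le 1$, $\iinorm{R_i}\le 1$ rather than merely $\Snorm{k}{\phi_i}\to 0$. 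The paper carries this out in its Step~0. With that correction your exterior regime closes, and the rest of your sketch matches the paper's proof.
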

The proof adapts the arguments of Carletti \cite{Car24}, and follows ideas introduced in Premoselli \cites{pre:ccm, Pre22, Pre24}.
\begin{proof}
    We argue by contradiction. Assume that there are sequences $(\a_i)_i$, $(\rho_i)_i$ such that $\a_i \to \infty$ and $\rho_i \to 0$, $\pa_i = (\zm[i]) \in \param[i]$. Assume also that there exists a sequence $(R_i)_i$ of continuous functions in $M$ such that the following holds:   for all $i$ big enough, there exists a family $\phi_i \in \Keri^\perp$ which is the unique solution to \eqref{eq:modlin} given by Proposition \ref{prop:invertLHk}, for some unique $(\lambda_i^j)_i$, $j=0,\ldots ,n$, and  
    \begin{equation}\label{eq:contraphiRi}  
       \iinorm{R_i} = 1\hbox{ for all }i\, ;\,    \inorm{\phi_i} \to \infty  \text{ as }i\to \infty.
    \end{equation}
    Remark that by a direct computation, using $\tau$ as in \eqref{choice:tau}, \eqref{eq:contraphiRi} gives that 
    \begin{equation}\label{ineq:44}
\Lnorm[(M)]{\frac{2n}{n+2k}}{R_i} \leq C(\a_i\mu_i)^{\tau}\left\{\begin{array}{cc}
1&\hbox{ if }\tau<\frac{n-2k}{2}\\
\left(\ln\frac{1}{\alpha_i\mu_i}\right)^{\frac{n+2k}{2n}}&\hbox{ if }\tau=\frac{n-2k}{2}.\end{array}\right.
\end{equation}
By Proposition \ref{prop:invertLHk} and the dual embedding $L^{\frac{2n}{n+2k}}(M) \emb \Sob[-k](M)$, we get
    \begin{equation}\label{ineq:45}
     \Snorm{k}{\phi_i} \leq C \Snorm{-k}{R_i} \leq C\Lnorm[(M)]{\frac{2n}{n+2k}}{R_i} .
       \end{equation}
    The proof now splits in several steps.
    
    \smallskip\noindent\textbf{Step 0 : $\sum_{j=0}^n |\lambda_i^j| \to 0$.} Let $j_0 \in \{0,\ldots, n\}$, we start by testing equation \eqref{eq:modlin} against $\Zed[i]{j_0}$. Integrating by parts, using \eqref{eq:RZ0Hk}, \eqref{ineq:44}, and Proposition \ref{prop:Zjort}, we obtain
\begin{eqnarray*}
\lambda_i^{j_0} + o\Big(\sum_{j=0}^n |\lambda_i^{j}|\Big)& =& O(\Vert \phi_i\Vert_{\frac{2n}{n-2k}}\sigma_\alpha)+O\left(\int_M |R_i| \cdot |\Zed[i]{j_0}|\, dv_g\right).\\
& =& O((\a_i\mu_i)^{\tau})+O\left(\int_M |R_i| \cdot |\Zed[i]{j_0}|\, dv_g\right).
\end{eqnarray*}
where we have used \eqref{ineq:44}, \eqref{ineq:45}, Sobolev's inequality \eqref{ineq:AB} and $\tau\leq\frac{n-2k}{2}$. Regarding the second term of the right-hand-side, using the pointwise control \eqref{eq:estimZe}, the definition of $\Vert\cdot\Vert_{**}$, and $ \Vert R_i\Vert_{**}<C$, we have that
\begin{equation*}
\int_M |R_i \Zed[i]{j_0}|\, dv_g
%&\leq& C \int_M \Vert R_i\Vert_{**}\frac{ \Tet[i](x)\Bub_{\zm[i]}(x)}{(\mu_i+d_g(x, z_i)^{2k}}\frac{\mu_i^{\frac{n-2k}{2}}}{(\mu_i+d_g(x, z_i)^{n-2k}}\, dv_g\\
\leq C \int_M \frac{(\alpha_i (\mu_i+d_g(x,z_i)))^\tau}{(1+\alpha_i d_g(x,z_i))^p}\frac{\mu_i^{n-2k}}{(\mu_i+d_g(x, z_i))^{2n-2k}}\, dv_g\leq C(\a_i\mu_i)^{\tau}
\end{equation*}
since $\tau<n-2k$. Therefore   $\lambda_i^{j_0} + o\Big(\sum_{j=0}^n |\lambda_i^{j}|\Big) = O((\a_i\mu_i)^{\tau})$.  Thus, summing over all $j_0$, we have
    \begin{equation}\label{eq:estlambi}
        \sum_{j=0}^n |\lambda_i^j| \leq C (\a_i\mu_i)^{\tau}.
    \end{equation}
   \noindent\textbf{Step 1 : A first estimate on $|\phi_i|$.} Since $\phi_i$ solves \eqref{eq:modlin}, we have that
    \[  (\Dg^k + \a_i^{2k}) \bsq{\phi_i - \sum_{j=0}^n \lambda_i^j \Zed[i]{j}} = (\crit-1)\TBub[i]^{\crit-2} \phi_i + R_i.
        \]
Elliptic theory yields $\phi_i \in C^0(M)$. Given $x \in M$, Green's  formula yields 
    \begin{multline} \label{eq:reprphii}
        \phi_i(x) - \sum_{j=0}^{n}\lambda_i^j \Zed[i]{j}(x) = (\crit-1)\intM{G_{\alpha_i}(x,y)\TBub[i]^{\crit-2}(y)\phi_i(y)}(y)\\ + \intM{G_{\alpha_i}(x,y) R_i(y)}(y),
        \end{multline} 
    where $G_{\alpha_i}$ is the Green's function for the operator $\Dg^k + \a_i^{2k}$ as defined in Theorem \ref{th:green:M}. We define $\Psp(t) := (1+ t)^{-p}$ for all $t\geq 0$. Lemma \ref{lem:giraud} yields %and Lemmae A.1 and A.2 in Carletti \cite{carletti:green}
    \begin{multline*}  
        \intM{|G_{\alpha_i}(x,y)||R_i(y)|}(y)\\
        \begin{aligned}
            &\leq \iinorm{R_i} \intM{|G_{\alpha_i}(x,y)|\Tet[i](y)(\mu_i+\dg{z_i,y})^{-2k}\Bub_{\zm[i]}(y)}(y)\\
            &\leq C\Tet[i](x)\,\Bub_{\zm[i]}(x),
        \end{aligned}
    \end{multline*}
    and similarly
    \begin{equation*}  
        \intM{|G_{\alpha_i}(x,y)|\TBub[i]^{\crit-2}(y)|\phi_i(y)|}(y) \leq C \inorm{\phi_i}(\a_i\mu_i )^{\tau}\Psp(\a_i\dg{z_i,x}) \, \Bub_{\zm[i]}(x),
    \end{equation*}
    for all $x\in M$. Thus, \eqref{eq:reprphii} becomes, together with \eqref{eq:estlambi},
    \begin{equation}\label{eq:1stestphi}
        |\phi_i(x)| \leq C\Tet[i](x)\, \Bub_{\zm[i]}(x) + C \inorm{\phi_i}(\a_i\mu_i )^{\tau}\Psp(\a_i\dg{z_i,x})\, \Bub_{\zm[i]}(x).
    \end{equation}
We  set $\Tg_i := \exp_{z_i}^* g(\mu_i \cdot\,)$ and for all $y \in \Bal{0}{\delta/\mu_i} \sub \rr^n$, we define
    \[  \psi_i(y) := \frac{\mu_i^{\frac{n-2k}{2}}}{(\a_i\mu_i)^{\tau} \inorm{\phi_i}} \phi_i(\exp_{z_i}(\mu_i y)).
        \]
\smallskip\noindent\textbf{Step 2 : We claim that $\psi_i \to \psi_\infty$ in $C^0_{loc}(\rr^n)$.} Fox $R>0$, \eqref{eq:1stestphi}  gives 
    \begin{equation}\label{eq:boundpsi}
        |\psi_i(y)| \leq C (1+|y|)^{2k-n} + o(1)\qquad\forall~ y \in \Bal{0}{R},
    \end{equation}
    as $i\to \infty$, for some constant $C>0$ independent of $i$ and $R>0$. We get
    \[  \abs{(\Dg[\Tg_i]^k + \a_i^{2k} \mu_i^{2k})\psi_i(y)} = \frac{\mu_i^{\frac{n+2k}{2}}}{(\a_i\mu_i )^{\tau} \inorm{\phi_i}} \abs{(\Dg^k + \a_i^{2k})\phi_i(\exp_{z_i}(\mu_i y))}.
        \]
    Now, straightforward computations show that for all $y \in \Bal{0}{R}$,
    \begin{align}
        \notag &\mu_i^{2k} \TBub[i]^{\crit-2}(\exp_{z_i}(\mu_i y)) \leq 1\\
        \label{tmp:inftynormR} &\frac{\mu_i^{\frac{n+2k}{2}}}{(\a_i\mu_i )^{\tau} \inorm{\phi_i}} |R_i(\exp_{z_i}(\mu_i y))| \leq C\frac{\iinorm{R_i}}{\inorm{\phi_i}} = o(1)\\
        \notag &\frac{\mu_i^{\frac{n+2k}{2}}}{(\a_i\mu_i )^{\tau} \inorm{\phi_i}}|\lambda_i^j| \abs{(\Dg^k + \a_i^{2k})\Zed[i]{j}(\exp_{z_i}(\mu_i y))} \leq C \frac{1}{\inorm{\phi_i}} = o(1).
    \end{align}
    Using \eqref{eq:modlin}, it then follows that $\abs{(\Dg[\Tg_i]^k + \a_i^{2k} \mu_i^{2k})\psi_i(y)} \leq C $ for all $y \in \Bal{0}{R}$. Since $(\Dg[\Tg_i]^k+\a_i^{2k}\mu_i^{2k})$ is an elliptic operator with bounded coefficients, uniformly in $i$, then there exists $\psi_\infty \in C^{2k-1}(\rr^n)$ such that $\psi_i \to \psi_\infty$ in $C^{2k-1}_{loc}(\rr^n)$ up to a subsequence. Passing to the limit in \eqref{eq:boundpsi} yields
    \begin{equation}\label{eq:bndpsiinfty}  
        |\psi_\infty(y)| \leq C(1+ |y|)^{2k-n} \qquad \forall y \in \rr^n.
    \end{equation}

    \noindent\textbf{Step 3: $\psi_\infty\in \Ker[]-\{0\}$.}
    Integrating against a test-function in $\Cct(\rr^n)$, we get that, up to a subsequence 
    \begin{align*}
        \frac{\mu_i^{\frac{n+2k}{2}}}{(\a_i\mu_i )^{\tau} \inorm{\phi_i}}& (\Dg^k + \a_i^{2k})\phi_i(\exp_{z_i}(\mu_i \,\cdot\,)) \to \D_\xi^k \psi_\infty\\
        \frac{\mu_i^{\frac{n+2k}{2}}}{(\a_i\mu_i )^{\tau} \inorm{\phi_i}}& \TBub[i]^{\crit-2}(\exp_{z_i}(\mu_i \,\cdot\,))\phi_i(\exp_{z_i}(\mu_i \,\cdot\,)) \to \eBub^{\crit-2} \psi_\infty
    \end{align*}
    in the weak sense. Since $\phi_i$ solves \eqref{eq:modlin}, and using \eqref{tmp:inftynormR}, we obtain that $\psi_\infty$ solves the limit equation $\Dg[\xi]^k \psi_\infty = (\crit-1)\eBub^{\crit-2} \psi_\infty$,  in the distributional sense on $\rr^n$. Now since $\psi_\infty \in L^\infty(\rr^n)$ by \eqref{eq:bndpsiinfty}, a standard bootstrap argument gives that $\psi_\infty \in C^\infty(\rr^n)$. Then, using a representation formula, we also obtain $\psi_\infty \in \hSob(\rr^n)$, so that $\psi_\infty \in \Ker[]$.
    
    \smallskip\noindent We now claim that $\psi_\infty \not\equiv 0$.
    Indeed, let $y_i \in M$ be the point where $\frac{|\phi_i|}{\Tet[i]\Bub_{\zm[i]}}$ reaches its maximum. By \eqref{eq:1stestphi}, we get that
    \[  \inorm{\phi_i} = \frac{|\phi_i(y_i)|}{\Tet[i](y_i)\Bub_{\zm[i]}(y_i)} \leq C \bpr{1+ (\a_i\mu_i )^{\tau} \frac{\Psp(\a_i \dg{z_i,y_i})}{\Tet[i](y_i)}\inorm{\phi_i}},
        \] 
    this implies that $\frac{\dg{z_i,y_i}}{\mu_i} = \bigO(1)$ since $\inorm{\phi_i}\to\infty$ as $i\to\infty$. We can then assume that there exists $y_\infty \in \rr^n$ such that $\frac{1}{\mu_i}\exp_{z_i}^{-1}(y_i) \to y_\infty$ as $i\to \infty$, up to a subsequence. Then, since by definition of $\psi_i$, we have $\left|\psi_i\big(\frac{1}{\mu_i}\exp_{z_i}^{-1}(y_i)\big)\right| = c+o(1)$ for some $c>0$ for all $i$, it holds that $|\psi_\infty(y_\infty)| = c>0$, which yields $\psi_\infty \not\equiv 0$ and the claim is proved. 

    \smallskip\noindent\textbf{Step 4: Getting a contradiction.} Using that $\phi_i \in \Keri^\perp$, we have 
    \begin{equation}\label{tmp:phiinkerperp}  \frac{1}{(\a_i\mu_i )^{\tau}\inorm{\phi_i}} \sum_{l=0}^k \intM{\phi_i~ \Dg^l \Zed[i]{j}} = 0
        \end{equation}
    for $j=0,\ldots, n$.
    Now, by definition of $\Zed[i]{j}$ and using \eqref{eq:1stestphi}, we get that 
  $$\lim_{i\to +\infty}\frac{1}{(\a_i\mu_i )^{\tau}\inorm{\phi_i}} \sum_{l=0}^k \intM{\phi_i~ \Dg^l \Zed[i]{j}}=\int_{\rr^n} \psi_\infty ~\Dg[\xi]^k \Ze{j}\,dy .$$
%    
%    for $l=0,\ldots , k-1$,
%    \begin{eqnarray*}  
%        \frac{1}{(\a_i\mu_i)^{\tau}\inorm{\phi_i}} \abs{\intM{\phi_i~ \Dg^l \Zed[i]{j}}}
%        &\leq& C \int_{B_{2/\a_i}(z_i)} \frac{\Tet[i]}{(\a_i\mu_i )^{\tau}} \Bub_{\zm[i]}|\Dg^l \Zed[i]{j}|\, dv_g\\
%            &\leq & o(1)
%        \end{eqnarray*}
%    as $i\to \infty$. In a second step, for $l=k$, fix $R>0$, we have that
%    \[  \frac{1}{(\a_i\mu_i )^{\tau}\inorm{\phi_i}} \intM[\Bal{z_i}{R\mu_i}]{\phi_i~ \Dg^k \Zed[i]{j}} = \int_{\Bal{0}{R}} \psi_\infty~\Dg[\xi]^k \Ze{j} \, dy + o(1),
%        \]
%    while we also compute, as before
%    \begin{multline*}  
%        \frac{1}{(\a_i\mu_i )^{\tau}\inorm{\phi_i}} \abs{\intM[\Bal{z_i}{2/\a_i}\setminus \Bal{z_i}{R\mu_i}]{\phi_i~ \Dg^k \Zed[i]{j}}}\\ \leq C \int_{\rr^n \setminus \Bal{0}{R}} (1+|y|)^{-2n+2k+\tau} dy = \epsilon(R),
%        \end{multline*}
%    where $\epsilon(R) \to 0$ as $R\to \infty$, since $n-2k>\tau$. Using \eqref{eq:estimZe} and \eqref{eq:bndpsiinfty}, we get by dominated convergence that
%    \[  \lim_{R\to \infty} \int_{\Bal{0}{R}} \psi_\infty ~\Dg[\xi]^k \Ze{j}\,dy = \int_{\rr^n} \psi_\infty~\Dg[\xi]^k \Ze{j}\, dy. 
%        \]
%    Finally, letting first $i\to \infty$ and then $R\to \infty$ in the above, \eqref{tmp:phiinkerperp} gives 
Therefore $\int_{\rr^n} \psi_\infty ~\Dg[\xi]^k \Ze{j}\,dy = 0$ for all $j=0,\ldots, n$. Since $\psi_\infty \in \hSob(\rr^n)$, this shows that $\psi_\infty \in \Ker[]^\perp$, and this is a contradiction with the fact that $\psi_\infty \in \Ker[]$ and $\psi_\infty \not\equiv 0$. This proves Proposition \ref{prop:estlinC0}.
\end{proof}

\subsection{Proof of Theorem \ref{prop:unicC0}.}
The proof closely follows the arguments in \cite{Car24}.  Writing $f(t) := |t|^{\crit-2}t$ for $t\in \rr$, there exists $C>0$ and $0< \theta< \min(1,\crit-2)$ such that for all $b>0$, $a_1,a_2 \in \rr$, the following holds
\begin{equation}\label{eq:estpoint1} 
    |f(b+a_1) - f(b) - f'(b)a_1| \leq C |a_1|\bpr{|a_1|^{\crit-2} + b^{\crit-2-\theta}|a_1|^{\theta}},
\end{equation}
and 
\begin{multline}\label{eq:estpoint2}
    |f(b+a_1) - f(b+a_2) - f'(b)(a_1 - a_2)|\\
        \leq C |a_1 - a_2| \bpr{b^{\crit-2-\theta}|a_1|^\theta + |a_1|^{\crit-2} + b^{\crit-2-\theta}|a_2|^\theta + |a_2|^{\crit-2}}.
\end{multline}
%\begin{proof}[Proof of Proposition \ref{prop:unicC0}] 
We go back to the proof of the Theorem. Since $\tau\leq 2$, the bound \eqref{eq:estRV} yields the existence of $C_1>0$ such that
    \begin{equation}\label{tmp:estRV}
        \iinorm{(\Dg^k + \a_i^{2k}) \TBub[i] - \TBub[i]^{\crit-1}} \leq C_1.
        \end{equation}
    We define the set 
    \begin{multline*}  
        \mathcal{S}_i := \Bigg\{ \phi\in C^0(M) \,:\, \inorm{\phi} \leq 2 C_0C_1\\
            \text{and} \qquad \sum_{l=0}^k \intM{\phi\,\Dg^l \Zed[i]{j}} = 0 \quad \text{for } j=0,\ldots, n\Bigg\},
        \end{multline*}
    where $C_0>0$ is given by Proposition \ref{prop:estlinC0}, for all $i\geq i_0$. As one checks, $(\mathcal{S}_i, \inorm{\cdot})$ is a complete metric space for all $i\geq i_0$. For any $\phi \in \mathcal{S}_i$, we define %as in ... 
    \begin{equation*}%\label{def:Giphi}  
    N_{\alpha_i,\nu_i}(\phi) := |\TBub[i]+ \phi|^{\crit-2} (\TBub[i]+ \phi) - \TBub[i]^{\crit-1} - (\crit-1)\TBub[i]^{\crit-2} \phi,
    \end{equation*}
    and we set 
    \[  R_i(\phi) := -(\Dg^k + \a_i^{2k})\TBub[i] + \TBub[i]^{\crit-1} + N_{\alpha_i,\nu_i}(\phi).
        \]
    It holds that $R_i(\phi) \in C^0(M)$. Let $\Phi_i(\phi) \in \Keri^\perp$ be the unique solution in $\Keri^\perp$, given by Proposition \ref{prop:unicHk}, to 
    \[  (\Dg^k + \a_i^{2k}) \Phi_i(\phi) - (\crit-1)\TBub[i]^{\crit-2} \Phi_i(\phi) = R_i(\phi) + \sum_{j=0}^n \lambda_i^j (\Dg^k + \a_i^{2k}) \Zed[i]{j},
        \]
    for some  $\lambda_i^j \in \rr$, $j = 0,\ldots, n$. Standard elliptic theory (see \cite{ADN}) gives that $\Phi_i(\phi) \in C^{2k-1}(M)$. We show that $\Phi_i$ is contracting on $\mathcal{S}_i$.
    
    \smallskip\noindent\textbf{Step 1 : $\Phi_i$ stabilizes $\mathcal{S}_i$.} Using \eqref{eq:estpoint1}, we have
    \[  |N_{\alpha_i,\nu_i}(\phi)(x)| \leq C |\phi(x)| \bpr{|\phi(x)|^{\crit-2} + \TBub[i](x)^{\crit-2-\theta} |\phi(x)|^\theta}
        \]
    for some $0<\theta<\min(1,\crit-2)$. Therefore, straightforward computations show that for $\phi_i \in \mathcal{S}_i$, and $x\in M$, then
    $$  |N_{\alpha_i,\nu_i}(\phi)(x)| \leq C (\a_i \mu_i )^{\tau\theta} \a_i^{\tau} (\mu_i + \dg{z_i,x})^{\tau-2k} \Bub_{\zm[i]}(x)\hbox{ if }\a_i\dg{z_i,x} \leq 1,$$
    $$|N_{\alpha_i,\nu_i}(\phi)(x)| \leq C \a_i^{2k} \mu_i^{2k} \a_i^{\tau-p} \dg{z_i,x}^{\tau-2k-p} \Bub_{\zm[i]}(x)\hbox{ if }\a_i\dg{z_i,x} \geq 1.$$
%    the following holds :
%    \begin{itemize}
%        \item When $x\in M$ is such that $\a_i\dg{z_i,x} \leq 1$,
%        \[  |N_{\alpha_i,\nu_i}(\phi)(x)| \leq C (\a_i \mu_i )^{\tau\theta} \a_i^{\tau} (\mu_i + \dg{z_i,x})^{\tau-2k} \Bub_{\zm[i]}(x);
%            \] 
%        \item When $x\in M$ is such that $\a_i\dg{z_i,x} \geq 1$,
%        \[  |N_{\alpha_i,\nu_i}(\phi)(x)| \leq C \a_i^{2k} \mu_i^{2k} \a_i^{\tau-p} \dg{z_i,x}^{\tau-2k-p} \Bub_{\zm[i]}(x).
%            \]
%    \end{itemize}
    Since $\a_i \mu_i  \leq \rho_i \to 0$, there exists a sequence $(\epsilon_i)_i$ such that $\epsilon_i \to 0$, and with \eqref{tmp:estRV}, $R_i(\phi)$ satisfies $ \iinorm{R_i(\phi)} \leq (C_1 + \epsilon_i)$.  Proposition \ref{prop:estlinC0} yields $\inorm{\Phi_i(\phi)} \leq C_0(C_1 + \epsilon_i)$. Up to increasing $i_0$ to have $\epsilon_i < C_1$ for all $i\geq i_0$, since $\Phi_i(\phi) \in \Keri^\perp \cap C^{2k-1}(M)$ we get that $\Phi_i(\phi) \in \mathcal{S}_i$ for all $\phi \in \mathcal{S}_i$.
    
\smallskip\noindent\textbf{Step 2: $\Phi_i$ is contracting on $\mathcal{S}_i$.} Let $\phi_1,\phi_2 \in \mathcal{S}_i$, we have by definition of $\Phi_i$ that 
    \begin{multline*}  
        (\Dg^k + \a_i^{2k}) \Big(\Phi_i(\phi_1) - \Phi_i(\phi_2)\Big) - (\crit-1)\Big(\Phi_i(\phi_1)-\Phi_i(\phi_2)\Big)\\
            = N_{\alpha_i,\nu_i}(\phi_1) - N_{\alpha_i,\nu_i}(\phi_2) + \sum_{j=0}^n \Tilde{\lambda}_i^j (\Dg^k + \a_i^{2k})\Zed[i]{j}
        \end{multline*}
for some $\Tilde{\lambda}_i^j \in \rr$, $j=0,\ldots, n$. With \eqref{eq:estpoint2}, we get 
\[  \iinorm{N_{\alpha_i,\nu_i}(\phi_1)-N_{\alpha_i,\nu_i}(\phi_2)} \leq \epsilon_i \inorm{\phi_1-\phi_2}.
\] 
Thus, Proposition \ref{prop:estlinC0} yields $\inorm{\Phi_i(\phi_1)-\Phi_i(\phi_2)} \leq C_0 \epsilon_i \inorm{\phi_1-\phi_2}$. Up to increasing again $i_0$ to have $C_0\epsilon_i <1$ for all $i\geq i_0$, we conclude that $\Phi_i$ is a contraction on $\mathcal{S}_i$.

\smallskip\noindent\textbf{Step 3 : End of the proof.} Banach's fixed-point theorem applies and gives, for all $i\geq i_0$, the existence of a unique $\phi_i \in \mathcal{S}_i$ such that $\Phi_i(\phi_i) = \phi_i$. Therefore
    \begin{equation}\label{eq:forphii}
        (\Dg^k + \a_i^{2k}) \phi_i - (\crit-1)\TBub[i]^{\crit-2} \phi_i =  R_i(\phi_i) + \sum_{j=0}^n \lambda_i^j (\Dg^k + \a_i^{2k})\Zed[i]{j},
    \end{equation}
    and $\phi_i = \Phi_i(\phi_i) \in \Keri^\perp \cap C^{2k-1}(M)$. Moreover, this gives $R_i(\phi_i) \in C^{2k-1}(M)$, so that it follows from \eqref{eq:forphii} that $\phi_i \in C^{2k}(M)$. Finally, we have $\Snorm{k}{\phi_i} = \Snorm{k}{\Phi_i(\phi_i)} \leq C\Snorm{-k}{R_i(\phi_i)} \to 0 $ as $i \to \infty$.

\smallskip\noindent This ends the proof of existence part of Theorem \ref{prop:unicC0}. The bound $\Vert\phi\Vert_*\leq 2C_0C_1$ yields \eqref{bnd:phi} for $l=0$. The control on the derivatives is consequence of Green's representation formula, the control \eqref{bnd:der:G} on the Green's function and the bound \eqref{bnd:phi} for $l=0$.

    \smallskip\noindent This ends the proof  of Theorem \ref{prop:unicC0}.%\end{proof}

\section{Green's function for $\Delta_g^k+\alpha^{2k}$ on $(M,g)$}\label{sec:green:M}
\begin{theorem}\label{th:green:M} Let $k,n\in\nn$ be such that $n>2k\geq 2$ and let $(M,g)$ be a compact Riemannian manifold without boundary. For all $\alpha\geq 1$ and $x\in M$, there exists a unique function $G_\alpha(x,\cdot)\in L^1(M)$ such that
$$\varphi(x)=\int_{M} G_\alpha(x,y)\left(\Delta_g^k\varphi+\alpha^{2k}\varphi\right)(y)\, dv_g(y)$$
for all $\varphi\in C^\infty(M)$. Moreover, the following properties hold:
\begin{itemize} 
\item $G_\alpha(x,\cdot)\in C^\infty(M-\{x\})$ and $G_\alpha(x,y)=G_\alpha(y,x)$ for all $x,y\in M$, $x\neq y$;
\item For any $\alpha\geq 1$, we have that $\lim_{d_g(x,y)\to 0}d_g(x,y)^{n-2k}G_\alpha(x,y)=C_{n,k}$;
\item For all $0\leq l\leq 2k-1$ and for all $q\in\nn$, there exists $C_{q, (M,g),k}>0$ independent of $\alpha\geq 1$ such that
\begin{equation}\label{bnd:der:G}
|\nabla^l_{g,y}G_\alpha(x,y)|\leq C_{q,(M,g),k}\frac{d_g(x,y)^{2k-n-l}}{1+\alpha^q d_g(x,y)^q}\hbox{ for all }x,y\in M\hbox{ s.t. }x\neq y.
\end{equation}
\end{itemize}
\end{theorem}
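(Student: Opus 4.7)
The plan is to construct $G_\alpha$ via a parametrix built from the Euclidean Green's function $\Gamma$ for $\Delta_\xi^k+1$ on $\mathbb{R}^n$ (Theorem \ref{th:green:rn}), which already carries the fast-decay structure we need. Existence, uniqueness, and symmetry are standard: the operator $P_\alpha:=\Delta_g^k+\alpha^{2k}$ is self-adjoint and, for $\alpha\geq 1$, coercive on $H_k^2(M)$ because $\int_M u P_\alpha u\,dv_g\geq \alpha^{2k}\|u\|_2^2$. Hence $P_\alpha\colon H_{2k}^2(M)\to L^2(M)$ is an isomorphism, and the distributional inverse produces a kernel $G_\alpha(x,y)$; symmetry is inherited from self-adjointness, $L^1$-integrability will follow from the local singularity below, and the classical leading asymptotic $d_g(x,y)^{n-2k}G_\alpha(x,y)\to C_{n,k}$ comes from the parametrix just like for the pure polyharmonic operator.

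For the decay, observe first that the function $\Gamma_\alpha(X):=\alpha^{n-2k}\Gamma(\alpha X)$ is the Green's function of $\Delta_\xi^k+\alpha^{2k}$ on $\mathbb{R}^n$, and rescaling the bound of Theorem \ref{th:green:rn} gives
\begin{equation*}
|\nabla_X^l \Gamma_\alpha(X)|\leq C_q\,\frac{|X|^{2k-n-l}}{1+(\alpha|X|)^{q}}\qquad\text{for all }l=0,\ldots,2k-1,\ q\geq 0.
\end{equation*}
Fix $\delta<i_g/2$ and a cutoff $\chi\in C_c^\infty([0,\infty))$ with $\chi\equiv 1$ on $[0,\delta/2]$, $\chi\equiv 0$ on $[\delta,\infty)$, and set $\tilde G_\alpha(x,y):=\chi(d_g(x,y))\,\Gamma_\alpha\bigl((\exp_x^g)^{-1}(y)\bigr)$. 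A direct computation yields $P_\alpha\tilde G_\alpha(x,\cdot)=\delta_x+E_\alpha(x,\cdot)$, where $E_\alpha$ splits into a cutoff piece supported in the annulus $\{\delta/2\leq d_g(x,y)\leq\delta\}$, which is $O_q((1+\alpha\delta)^{-q})$ by the decay of $\Gamma_\alpha$, plus a metric-error piece supported in $B_\delta(x)$ coming from the difference $\Delta_g^k-\Delta_\xi^k$ in $g_x$-normal coordinates at $x$. The latter piece gains a factor $d_g(x,y)^2$ from the Taylor expansion of the metric, so the overall error inherits the same pointwise weight $(1+\alpha d_g(x,y))^{-q}$ as $\tilde G_\alpha$ with strictly improved $d_g(x,y)$-exponent.

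The true Green's function is $G_\alpha=\tilde G_\alpha-P_\alpha^{-1}E_\alpha$, and propagating the weight $(1+\alpha d_g(x,y))^{-q}$ to the correction $P_\alpha^{-1}E_\alpha$ is the main obstacle. I would handle it by iterating the parametrix: formally write $G_\alpha=\sum_{j\geq 0}(-1)^j\tilde G_\alpha\,\#\,E_\alpha^{\#j}$, and at each step bound the convolution by a Giraud-type lemma (the same kind as Lemma \ref{lem:giraud} used in Section \ref{sec:part3}), which controls
\begin{equation*}
\int_M \frac{d_g(x,z)^{2k-n}}{(1+\alpha d_g(x,z))^{q}}\cdot\frac{d_g(z,y)^{\beta}}{(1+\alpha d_g(z,y))^{q}}\,dv_g(z)
\end{equation*}
by the same weighted kernel in $d_g(x,y)$, up to a power of $\alpha^{-1}$ coming from the improved $d_g$-exponent of $E_\alpha$. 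The series then converges for $\alpha$ large, producing $G_\alpha$ with the bound \eqref{bnd:der:G} at $l=0$. Finally, the derivative bounds for $1\leq l\leq 2k-1$ follow from interior elliptic estimates for $P_\alpha$ applied on balls $B_{d_g(x,y)/2}(y)$, on which $P_\alpha G_\alpha(x,\cdot)=0$; after rescaling by $d_g(x,y)$ the operator has uniformly bounded coefficients (since $\alpha d_g(x,y)$ may be large but enters only as a zero-order coefficient, which only helps), and this converts $L^\infty$ control of $G_\alpha$ into the stated $C^{2k-1}$ control. The delicate bookkeeping is ensuring that each iteration in the series genuinely gains in $\alpha d_g(x,y)$, not merely in $d_g(x,y)$, which is what forces the use of Giraud estimates adapted to the weight $(1+\alpha d_g(x,\cdot))^{-q}$ rather than the unweighted ones.
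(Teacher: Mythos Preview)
Your strategy matches the paper's: a parametrix $\tilde G_\alpha$ built from the rescaled Euclidean kernel $\Gamma_\alpha$, corrected by a Neumann iteration controlled by the weighted Giraud lemma (Lemma~\ref{lem:giraud}). Two remarks. First, the paper uses a \emph{finite} Neumann sum plus a smooth remainder $\phi_\alpha$ obtained by inverting $\Delta_g^k+\alpha^{2k}$ on the last error $H_\alpha^{N+1}$; this works uniformly for all $\alpha\geq 1$, whereas your ``the series then converges for $\alpha$ large'' leaves the range $1\leq\alpha\leq\alpha_0$ to be treated separately (there the weight $(1+\alpha d_g)^{-q}$ is bounded above and below on the compact manifold, so the statement reduces to the classical polyharmonic Green estimate --- but you should say so).

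The genuine gap is the derivative step. After rescaling by $r:=d_g(x,y)$ the equation becomes $\bigl(\Delta_{\tilde g}^k+(\alpha r)^{2k}\bigr)\tilde u=0$ on the unit ball, and when $\alpha r\gg 1$ a large positive zero-order coefficient does \emph{not} ``only help'' in higher-order interior regularity: the standard estimate reads $\|\tilde u\|_{C^{2k-1}(B_{1/2})}\leq C\bigl(\|\Delta_{\tilde g}^k\tilde u\|_{L^p(B_1)}+\|\tilde u\|_{L^p(B_1)}\bigr)\leq C\bigl(1+(\alpha r)^{2k}\bigr)\|\tilde u\|_{L^\infty(B_1)}$, and the extra factor $(\alpha r)^{2k}$ destroys the decay you want (there is no maximum-principle mechanism for $k\geq 2$). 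The paper avoids this entirely by reading the derivative bounds off the explicit construction: derivatives of $H_\alpha$ come from Corollary~\ref{coro:green:rn:alpha}, derivatives of the convolution terms by differentiating under the integral and reapplying Lemma~\ref{lem:giraud}, and the smooth tail $\phi_\alpha$ via Lemma~\ref{lem:regul}, which rescales by $\alpha^{-1}$ so that the operator becomes $\Delta^k+1$ with bounded coefficients. If you insist on your interior-regularity route, you must split into the regimes $\alpha d_g(x,y)\lesssim 1$ (rescale by $d_g(x,y)$) and $\alpha d_g(x,y)\gtrsim 1$ (rescale by $\alpha^{-1}$, absorbing the resulting $(\alpha d_g)^l$ into the arbitrary exponent $q$).
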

For the operator $(\Delta_g+\alpha)^k$, similar estimates are proved in Carletti \cite{carletti:green}.

\subsection{Step 1: Green's function for $\Delta_\xi^k+1$ on $\rn$}\label{subsec:green:rn}
We first prove:
\begin{theorem}\label{th:green:rn} Let $k,n\in\nn$ be such that $n>2k\geq 2$. Then there exists a unique function $\Gamma\in L^1(\rn)$ such that
$$\varphi(0)=\int_{\rn} \Gamma(x)\left(\Delta_\xi^k\varphi+\varphi\right)(x)\, dx$$
for all $\varphi\in C^\infty_c(\rn)$. Moreover, the following properties hold:
\begin{itemize} 
\item $\Gamma\in C^\infty(\rn-\{0\})$ and $\Gamma$ is radially symmetrical;
\item The asymptotic as $x\to 0$ is the following:
$$\lim_{x\to 0}|x|^{n-2k}\Gamma(x)=C_{n,k}=\frac{Ga\left(\frac{n}{2}-k\right)}{2^{2k}(k-1)!\pi^{\frac{n}{2}}};\hbox{ where }Ga\hbox{ is the Gamma-function}$$ 
\item For all $l\in\nn$ and for all $q\in\nn$, there exists $C_{q,l,n,k}>0$ such that
\begin{equation}\label{der:Gamma}
|\nabla^l_\xi\Gamma(x)|\leq C_{q,l,n,k}\frac{|x|^{2k-n-l}}{1+|x|^q}\hbox{ for all }x\in \rn-\{0\}.
\end{equation}
\end{itemize}
\end{theorem}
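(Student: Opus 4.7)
The approach is Fourier-analytic. I define
\[ \Gamma := \mathcal{F}^{-1}\bigl((|\xi|^{2k}+1)^{-1}\bigr)\in\mathcal{S}'(\rn), \]
which is well defined because $(|\xi|^{2k}+1)^{-1}$ is a smooth bounded function (note that $|\xi|^{2k}$ is a genuine smooth function on $\rn$ since $2k$ is an even integer), and which is radial because the symbol is. Taking Fourier transform of the equation gives $(|\xi|^{2k}+1)\widehat\Gamma = 1$, whence $(\Delta_\xi^k+1)\Gamma = \delta_0$ in $\mathcal{S}'(\rn)$; by elliptic regularity this forces $\Gamma\in C^\infty(\rn\setminus\{0\})$. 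Uniqueness in $L^1(\rn)$ is equally quick: any two $L^1$ solutions differ by a tempered distribution whose Fourier transform is annihilated by the strictly positive factor $|\xi|^{2k}+1$, so they must agree almost everywhere.

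For the polynomial decay of \eqref{der:Gamma} at infinity, I exploit the classical duality between decay and Fourier-side smoothness: for multi-indices $\alpha,\beta$,
\[ x^\alpha \partial_x^\beta \Gamma(x) = \mathcal{F}^{-1}\bigl((-i)^{|\alpha|}\partial_\xi^\alpha\bigl[(i\xi)^\beta(|\xi|^{2k}+1)^{-1}\bigr]\bigr)(x). \]
The bracketed symbol is smooth on $\rn$ and $O(|\xi|^{|\beta|-2k-|\alpha|})$ at infinity, hence lies in $L^1(\rn)$ as soon as $|\alpha|>n+|\beta|-2k$. Summing over all multi-indices of a given length then gives $|x|^{|\alpha|}|\partial^\beta\Gamma(x)|\leq C$, yielding the faster-than-polynomial decay required in \eqref{der:Gamma} for $|x|\geq 1$. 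For $|x|\leq 1$ the bound $|\nabla^l\Gamma(x)|\lesssim |x|^{2k-n-l}$ is obtained by differentiating the leading-order expansion established below and applying interior Schauder estimates to the smoother remainder.

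The main difficulty is the precise asymptotic $|x|^{n-2k}\Gamma(x)\to C_{n,k}$ with the correct constant. My plan is to use the heat-semigroup representation
\[ \frac{1}{|\xi|^{2k}+1} = \int_0^\infty e^{-t}e^{-t|\xi|^{2k}}\,dt,\qquad \Gamma(x)=\int_0^\infty e^{-t}P_t(x)\,dt, \]
where $P_t := \mathcal{F}^{-1}(e^{-t|\xi|^{2k}})\in\mathcal{S}(\rn)$ is the polyharmonic heat kernel (the symbol $e^{-t|\xi|^{2k}}$ is genuinely Schwartz for each $t>0$). The parabolic rescaling $P_t(x) = t^{-n/(2k)}P_1(t^{-1/(2k)}x)$ together with the substitution $u=t|x|^{-2k}$ recasts the formula as
\[ |x|^{n-2k}\Gamma(x) = \int_0^\infty e^{-u|x|^{2k}}\,u^{-n/(2k)}\,P_1(u^{-1/(2k)}\hat x)\,du,\qquad \hat x=x/|x|. \]
Since $P_1$ is Schwartz and $n>2k$, the integrand is dominated uniformly in $\hat x$ by an integrable function (Schwartz decay of $P_1$ handles $u\to 0^+$, the exponent $n>2k$ handles $u\to\infty$). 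Dominated convergence as $|x|\to 0$ gives a finite limit independent of $\hat x$ (by radiality of $P_1$), which through the change of variable $r=u^{-1/(2k)}$ and polar integration becomes $\tfrac{2k}{\omega_{n-1}}\int_{\rn}|y|^{-2k}P_1(y)\,dy$. Computing this by Plancherel against the Riesz-potential identity $\mathcal{F}(|y|^{-2k})(\xi) = \tfrac{2^{n-2k}\pi^{n/2}Ga(n/2-k)}{(k-1)!}|\xi|^{2k-n}$ and the Gaussian moment $\int_0^\infty e^{-r^{2k}}r^{2k-1}dr=1/(2k)$, the constants telescope to the claimed $C_{n,k}=Ga(n/2-k)/(2^{2k}(k-1)!\pi^{n/2})$. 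The hard part is precisely this constant bookkeeping; the other steps reduce to standard Fourier and elliptic arguments.
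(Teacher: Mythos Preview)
Your proof is correct and, for existence, uniqueness, radial symmetry, and polynomial decay at infinity, coincides almost verbatim with the paper's argument: both define $\Gamma$ via $\mathcal F^{-1}((1+|\xi|^{2k})^{-1})$, invoke ellipticity for smoothness away from $0$, deduce radiality from $O(n)$-invariance, and obtain the large-$|x|$ bounds from the duality $x^\alpha\partial^\beta\Gamma\leftrightarrow\partial^\alpha(\xi^\beta\widehat\Gamma)$ with the $L^1$ condition $|\alpha|>n-2k+|\beta|$.

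The genuine difference lies in the near-origin analysis. The paper does \emph{not} compute the constant $C_{n,k}$ directly: instead it localizes by a cutoff $\eta$, notes that $(\Delta_\xi^k+1)(\eta\Gamma)=\delta_0+f$ with $f\in C_c^\infty$, and compares with the Dirichlet Green's function $G$ of $\Delta_\xi^k+1$ on the unit ball (whose singularity structure, including the constant, is imported from \cite{robert:pucci-serrin}). Hypoellipticity then gives $\eta(\Gamma-G)\in C^\infty(\rn)$, so $\Gamma$ inherits both the leading constant and the full derivative control $|\nabla^l\Gamma|\lesssim|x|^{2k-n-l}$ near $0$ from $G$ in one stroke. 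Your heat-semigroup route, by contrast, is self-contained: the representation $\Gamma=\int_0^\infty e^{-t}P_t\,dt$ with the polyharmonic kernel $P_t$ and the rescaling $t\mapsto u|x|^{2k}$ yield the constant explicitly after a Riesz-potential Plancherel computation. The trade-off is that the paper's comparison argument gives the derivative bounds near $0$ for free (since $\Gamma-G$ is smooth across the origin), whereas your sentence ``differentiating the leading-order expansion\dots and applying interior Schauder estimates'' is a bit compressed---you would more cleanly get $|\nabla^l\Gamma(x)|\lesssim|x|^{2k-n-l}$ by differentiating under the $t$-integral and repeating the same scaling estimate on $\nabla^lP_t$, which works because $\nabla^lP_1$ is again Schwartz.
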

For the operator $(\Delta_\xi+1)^k$, similar properties are proved by Carletti \cite{carletti:green}.

\smallskip\noindent{\it Proof of Theorem \ref{th:green:rn}:} For any function $f\in L^1(\rn)$, we define the Fourier transform 
$$\hat{f}(\xi):=\frac{1}{(2\pi)^{\frac{n}{2}}}\int_{\rn} f(x)e^{-i(x,\xi)}\, dx\hbox{ for all }\xi\in\rn.$$
%We let 
%${\mathcal S}(\rn)$ be the usual Schwartz space of functions whose derivatives are all rapidly decreasing, and 
We investigate $\Gamma\in {\mathcal S}^\prime(\rn)$,  the space of tempered distributions,  such that $(\Delta_\xi^k+1)\Gamma=\delta_0$, the Dirac mass at $0$. Via the Fourier transform, we get 
$$(1+|\xi|^{2k})\hat{\Gamma}=\widehat{(\Delta_\xi^k+1)\Gamma}=\hat{\delta_0}=\frac{1}{(2\pi)^{\frac{n}{2}}},\hbox{ and }\hat{\Gamma}=\frac{1}{(2\pi)^{\frac{n}{2}}}\cdot \frac{1}{1+|\xi|^{2k}}\in L^1_{loc}(\rn).$$
We then define
$$\Gamma:={\mathcal F}^{-1}\left(\frac{1}{(2\pi)^{\frac{n}{2}}}\cdot \frac{1}{1+|\xi|^{2k}}\right)\in{\mathcal S}^\prime(\rn)$$
where ${\mathcal F}^{-1}$ is the inverse of the Fourier transform. For any multi-index $\alpha=(\alpha_1,...,\alpha_n)\in \nn^n$, and $X\in\rn$, we define $X_\alpha=X_{\alpha_1}... X_{\alpha_n}$ and $D_\alpha=\partial_{\alpha_1}...\partial_{\alpha_n}$. For any multi-indices $\alpha,\beta$, we get that
%have that
%$$\widehat{X_\beta D_\alpha \Gamma}=(-i)^{|\alpha|+|\beta|}D_\beta(X_\alpha \hat{\Gamma})=\frac{(-i)^{|\alpha|+|\beta|}}{(2\pi)^{\frac{n}{2}}}D_\beta\left(\frac{X_\alpha}{1+|X|^{2k}}\right).$$
for $|\beta|>n-2k+|\alpha|$, $\widehat{X_\beta D_\alpha \Gamma}\in L^1(\rn)$ and $X_\beta D_\alpha \Gamma\in C^0(\rn)$ and is bounded in $\rn$. Therefore, $\Gamma\in C^\infty(\rn-\{0\})$ and all its derivatives decrease polynomially at infinity.

\smallskip\noindent Let $\eta\in C^\infty_c(B_1(0))$ be such that $\eta(x)=1$ for all $x\in B_{1/2}(0)$. As one checks, since $\Gamma\in C^\infty(\rn-\{0\})$, there exists $f\in C^\infty_c(B_1(0))$ such that $(\Delta_\xi^k+1)(\eta \Gamma)=\delta_0+f\hbox{ in }{\mathcal S}^\prime(\rn)$. Let $G$ be the Green's function for $\Delta_\xi^k+1$ on $B_1(0)$ with Dirichlet boundary condition as in Theorem 6.1 of Robert \cite{robert:pucci-serrin}. Since $G\in C^\infty(B_1(0)-\{0\})$, there exists $h\in C^\infty_c(\rn)$ such that $(\Delta_\xi^k+1)(\eta G)=\delta_0+h$ in ${\mathcal S}^\prime(\rn)$. Therefore $(\Delta_\xi^k+1)(\eta (\Gamma-G))=f-h\in C^\infty_c(\rn)$ in ${\mathcal S}^\prime(\rn)$. Hypoellipticity then yields $\eta (\Gamma-G)\in C^\infty(\rn)$. Since for all $i\in\nn$, there exists $C_{i,n,k}>0$ such that $|\nabla_\xi^i G(x)|\leq C_{i,n,k}|x|^{2k-n-i}$ for all $x\in B_{1/2}(0)$ (see again Robert \cite{robert:pucci-serrin}), we get that there exists $C'_{i,n,k}>0$ such that $|\nabla^i_\xi \Gamma(x)|\leq C'_{i,n,k}|x|^{2k-n-i}$ for all $x\in B_{1/2}(0)$.

\smallskip\noindent Uniqueness arises from $L^1(\rn)\subset {\mathcal S}^\prime(\rn)$ and the expression in term of the Fourier transform. Since the $\Delta_\xi^k(\varphi\circ \sigma)=(\Delta_\xi^k\varphi)\circ\sigma$ for all $\varphi\in C^\infty(\rn)$ and all isometry $\sigma\in O(\rn)$, we get that $\Gamma\circ\sigma^{-1}$ satisfies also Theorem \ref{th:green:rn}, and then $\Gamma=\Gamma\circ\sigma^{-1}$ for all isometry $\sigma$. Therefore $\Gamma$ is radially symmetrical.

\smallskip\noindent All these results prove Theorem \ref{th:green:rn}.\qed

\smallskip\noindent For any $\alpha>0$, defining $\Gamma_\alpha(x):=\alpha^{n-2k}\Gamma(\alpha x)$ for all $x\in\rn-\{0\}$. We get: 

\begin{coro}\label{coro:green:rn:alpha} Let $k,n\in\nn$ be such that $n>2k\geq 2$ and fix $\alpha>0$. Then there exists a unique function $\Gamma_\alpha\in L^1(\rn)$ such that
$$\varphi(0)=\int_{\rn} \Gamma_\alpha(x)\left(\Delta_\xi^k\varphi+\alpha^{2k}\varphi\right)(x)\, dx$$
for all $\varphi\in C^\infty_c(\rn)$. Moreover, the following properties hold:
\begin{itemize} 
\item $\Gamma_\alpha\in C^\infty(\rn-\{0\})$ and $\Gamma_\alpha$ is radially symmetrical;
\item For any $\alpha>0$, we have that $\lim_{x\to 0}|x|^{n-2k}\Gamma_\alpha(x)=C_{n,k}$;
\item For all $l\in\nn$ and for all $q\in\nn$, there exists $C_{q,l,n,k}>0$ independent of $\alpha>0$ such that
\begin{equation*}
|\nabla^l_\xi\Gamma_\alpha(x)|\leq C_{q,l,n,k}\frac{|x|^{2k-n-l}}{1+|\alpha x|^q}\hbox{ for all }x\in \rn-\{0\}.
\end{equation*}
\end{itemize}
\end{coro}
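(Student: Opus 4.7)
The plan is to obtain Corollary \ref{coro:green:rn:alpha} as an immediate scaling consequence of Theorem \ref{th:green:rn}. Define $\Gamma_\alpha(x):=\alpha^{n-2k}\Gamma(\alpha x)$ for $x\in\rn\setminus\{0\}$ and $\alpha>0$, where $\Gamma$ is the Green's function of $\Delta_\xi^k+1$ provided by Theorem \ref{th:green:rn}. Every listed property will be verified by a one-line change of variables.

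First I would check the representation formula. Given $\varphi\in C^\infty_c(\rn)$, set $\tilde\varphi(y):=\varphi(y/\alpha)$, so that $(\Delta_\xi^k\tilde\varphi)(y)=\alpha^{-2k}(\Delta_\xi^k\varphi)(y/\alpha)$. Applying Theorem \ref{th:green:rn} to $\tilde\varphi$ yields
\[
\varphi(0)=\tilde\varphi(0)=\int_{\rn}\Gamma(y)\bpr{\Delta_\xi^k\tilde\varphi+\tilde\varphi}(y)\,dy,
\]
and the substitution $y=\alpha x$ converts this exactly into the desired identity with the weight $\alpha^{2k}$ on $\varphi$. Since $\Gamma\in L^1(\rn)$, the same change of variable shows $\Gamma_\alpha\in L^1(\rn)$ with $\Vert\Gamma_\alpha\Vert_{L^1}=\alpha^{-2k}\Vert\Gamma\Vert_{L^1}$. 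Uniqueness in $L^1(\rn)$ follows by the tempered-distribution/Fourier argument of Theorem \ref{th:green:rn}: any such candidate has Fourier transform $(2\pi)^{-n/2}(|\xi|^{2k}+\alpha^{2k})^{-1}$.

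Next I would verify the qualitative properties. Smoothness on $\rn\setminus\{0\}$ and radial symmetry are inherited from $\Gamma$ since $x\mapsto \alpha x$ is a smooth $O(n)$-equivariant dilation. The asymptotic at the origin follows from
\[
|x|^{n-2k}\Gamma_\alpha(x)=|\alpha x|^{n-2k}\Gamma(\alpha x)\xrightarrow[x\to 0]{}C_{n,k}.
\]
Finally, differentiating the definition gives $\nabla_\xi^l\Gamma_\alpha(x)=\alpha^{n-2k+l}(\nabla_\xi^l\Gamma)(\alpha x)$, so by the bound \eqref{der:Gamma} with $y=\alpha x$,
\[
|\nabla_\xi^l\Gamma_\alpha(x)|\leq C_{q,l,n,k}\,\alpha^{n-2k+l}\frac{|\alpha x|^{2k-n-l}}{1+|\alpha x|^q}=C_{q,l,n,k}\frac{|x|^{2k-n-l}}{1+|\alpha x|^q},
\]
with a constant independent of $\alpha$, as claimed.

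There is no real obstacle here: once Theorem \ref{th:green:rn} is in hand, the corollary is a scaling argument. The only points requiring care are the bookkeeping of powers of $\alpha$ (which are fixed by demanding that the formal equation $(\Delta_\xi^k+\alpha^{2k})\Gamma_\alpha=\delta_0$ scales correctly) and the observation that the $\alpha$-independence of the constants in the pointwise bound hinges on the algebraic identity $\alpha^{n-2k+l}\cdot|\alpha x|^{2k-n-l}=|x|^{2k-n-l}$, which is exactly why the Ansatz $\Gamma_\alpha(x)=\alpha^{n-2k}\Gamma(\alpha x)$ is the right one.
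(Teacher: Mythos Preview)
Your proof is correct and follows exactly the approach of the paper: immediately before the corollary the paper defines $\Gamma_\alpha(x):=\alpha^{n-2k}\Gamma(\alpha x)$ and states the result as a direct scaling consequence of Theorem \ref{th:green:rn}, without writing out the verifications. Your write-up simply makes those routine change-of-variable checks explicit, and all the bookkeeping with powers of $\alpha$ is done correctly.
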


\subsection{Step 2: Construction of an approximate Green's function} We fix $\eta\in C^\infty(\rr)$ such that $\eta(t)=1$ for $t<\delta:=\frac{1}{3}i_g$ and $\eta(t)=0$ for $t>2\delta=\frac{2}{3}i_g$. For $\alpha>0$, since $\Gamma_\alpha$ is radially symmetrical, we define (with a slight abuse of notation):
\begin{equation}\label{def:H:alpha}
H_\alpha(x,y):=\eta(d_g(x,y)) \Gamma_\alpha(d_g(x,y)))\hbox{ for all }x,y\in M,\, x\neq y.
\end{equation}
Note that for all $x\in M$, $y\mapsto H_\alpha(x,y)$ is radial centered at $x$. We get
%We will often write $\nabla$ for the covariant derivative with respect to   $g$. We have that
$$\Delta_{g,y}^k  H_\alpha(x,\cdot)=\eta(d_g(x,y))\Delta_{g,y}^k \Gamma_\alpha(d_g(x,\cdot))+\sum_{i=1}^{2k}\nabla_y^i\eta(d_g(x,\cdot))\star \nabla_y^{2k-i}\Gamma_\alpha(d_g(x,\cdot))$$
where $T\star S$ denotes any linear combination and contraction of two tensors $T$ and $S$. For convenience,   the fixed variable $x$ will often be omitted and the derivatives will all be with respect to $y$. With \eqref{der:Gamma}, we get
$$\Delta_{g}^k  H_\alpha(x,\cdot)=\eta(x,\cdot) \Delta_{g}^k \Gamma_\alpha(x,\cdot)+O\left(\frac{1}{\alpha^q}{\bf 1}_{\delta\leq d_g(x,y)\leq 2\delta}\right).$$
For any radial function $f\in C^2(M)$, in Riemannian polar coordinates, 
\begin{eqnarray*}
\Delta_gf&=&-\frac{1}{r^{n-1}\sqrt{|g|}}\partial_r(r^{n-1}\sqrt{|g|}\partial_r f)=\Delta_\xi f-\frac{\partial_r\sqrt{|g|}}{\sqrt{|g|}}\partial f\\
&=& \Delta_\xi f +  (x)\star\nabla f.
\end{eqnarray*}
%where for any $m\in\zz$, $(x)^m=P(x)\cdot Q(x)$ where $P$ is homogeneous of degree $m$ and $Q$ is smooth. 
Iterating this identity yields
$\Delta_g^kf=\Delta_\xi^k f +\sum_{1\leq i\leq 2k-1} (x)^{i-2(k-1)}\star \nabla^i f$.
Therefore, using that $\Delta_\xi^k\Gamma_\alpha+\alpha^{2k}\Gamma_\alpha=0$ outside $0$ and the pointwise control of the derivatives in Corollary \ref{coro:green:rn:alpha}, we get that
\begin{eqnarray*}
\Delta_{g}^k  H_\alpha(x,\cdot)&=&\eta(x,\cdot) \left( \Delta_\xi^k \Gamma_\alpha +\sum_{1\leq i\leq 2k-1} (x)^{i-2(k-1)}\star \nabla^i \Gamma_\alpha\right)+O\left(\frac{1}{\alpha^q}{\bf 1}_{\delta\leq d_g(x,y)\leq 2\delta}\right)\\
%&=& -\alpha^{2k}\eta(x,\cdot) \Gamma_\alpha (d_g(x,y))+f_\alpha(x,\cdot)\hbox{ in }M-\{x\}\\
&=& -\alpha^{2k}H_\alpha (x,\cdot)+f_\alpha(x,\cdot)\hbox{ in }M-\{x\}
\end{eqnarray*}
where
\begin{equation}\label{bnd:f:alpha}
|f_\alpha(x,y)|\leq C{\bf 1}_{d_g(x,y)\leq 2\delta}\frac{d_g(x,y)^{2-n}}{1+\alpha^q d_g(x,y)^q}\hbox{ for all }x,y\in M,\, x\neq y.
\end{equation}
Therefore, since $\Delta_\xi^k\Gamma_\alpha+\alpha^{2k}\Gamma_\alpha=\delta_0$ in the distribution sense, we get that
\begin{equation}\label{eq:H:alpha}
\left(\Delta_{g}^k +\alpha^{2k}\right) H_\alpha(x,\cdot)=\delta_x+f_\alpha(x,\cdot)\hbox{ distributionally in }M.
\end{equation}

\subsection{Step 3: A Giraud-type Lemma.} 
In the spirit of Giraud's Lemma (see \cite{DHR} and \cite{carletti:green} for the case of a Riemannian manifold), we have that
\begin{lemma}\label{lem:giraud} Let $(M,g)$ be a compact Riemannian manifold of dimension $n\geq 2$ and let $(X_\alpha)_{\alpha>0},(Y_\alpha)_{\alpha>0}: M\times M-\{(x,x)/\, x\in M\}\to\rr$ measurable such that there exists $a,b\in (0,n]$, $p,q> n$ and $C_X,C_Y>0$ independent of $\alpha>0$ such that
\begin{equation*}
|X_\alpha(x,y)|\leq C_X\frac{d_g(x,y)^{a-n}}{1+\alpha^pd_g(x,y)^p}\hbox{ and }|Y_\alpha(x,y)|\leq C_Y\frac{d_g(x,y)^{b-n}}{1+\alpha^qd_g(x,y)^q}
\end{equation*}
for all $x,y\in M$, $x\neq y$ and $\alpha>0$. We set
$$Z_\alpha(x,y):=\int_M X_\alpha(x,z)Y_\alpha(z,y)\, dv_g(z)\hbox{ for all }x,y\in M,\, x\neq y.$$
This definition makes sense and there exists $C(M,g,a,b,p,q)>0$ such that

\smallskip\noindent$\bullet$ If $a+b<n$, then  
$$|Z_\alpha(x,y)|\leq C(M,g,a,b,p,q) C_XC_Y \frac{d_g(x,y)^{a+b-n}}{1+(\alpha d_g(x,y))^{\min\{a+q,b+p\}}}$$

\smallskip\noindent$\bullet$ If $a+b=n$, then 
$$|Z_\alpha(x,y)|\leq C(M,g,a,b,p,q) C_XC_Y \left\{\begin{array}{cc}
|\ln (\alpha d_g(x,y))|&\hbox{ if }\alpha d_g(x,y)<\frac{1}{2}\\
\\
\frac{1}{ (\alpha d_g(x,y))^{\min\{a+q,b+p\}}}&\hbox{ if }\alpha d_g(x,y)\geq \frac{1}{2}\end{array}\right.$$

\smallskip\noindent$\bullet$ If $a+b>n$, then  
$$|Z_\alpha(x,y)|\leq C(M,g,a,b,p,q)  \frac{C_XC_Y}{\alpha^{a+b-n}} \cdot\frac{1}{1+(\alpha d_g(x,y))^{\min\{a+q,b+p\}-a-b+n}}$$
Moreover, if we have that $|Y_\alpha(x,y)|\leq C_Y\frac{(\mu+d_g(x,y))^{b-n}}{1+\alpha^qd_g(x,y)^q}$ for all $x\neq y$ and $\alpha>0$, for some $b\in\rr$ (possibly negative), then, with $a+b<n$ we get
 \begin{equation*}
        |Z_\alpha(x,y)| \leq C \frac{1}{1+(\alpha d_g(x,y))^{\min\{a+q,b+p\}}} \left\{\begin{array}{cc}
            \mu^{n-\gamma} (\mu + d_g(x,y))^{a-n} & \hbox{ if } b<0\\
            (\mu + d_g(x,y))^{a+b-n} & \hbox{ if } b>0
        \end{array}\right. 
    \end{equation*}
for all $x,y\in M$, $x\neq y$ and all $\alpha\geq 1$ and $0<\mu\leq 1$.

\end{lemma}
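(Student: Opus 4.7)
\medskip\noindent\textbf{Proof plan for the Giraud-type Lemma.} Fix $x\neq y$ in $M$ and write $d = \dg{x,y}$. The plan is to split $M$ into three pieces controlling the two singularities separately: $\Omega_x := \{z \in M : \dg{x,z} \leq d/2\}$, $\Omega_y := \{z : \dg{z,y} \leq d/2\}$, and $\Omega_c := M\setminus(\Omega_x \cup \Omega_y)$, on which both distances exceed $d/2$. On $\Omega_x$ one has $\dg{z,y} \in [d/2, 3d/2]$ and symmetrically on $\Omega_y$, so in each side region the product $X_\alpha Y_\alpha$ factors into one singular kernel (to integrate in polar coordinates around the corresponding source point) times an essentially constant factor of order $d^{b-n}/(1+(\alpha d)^q)$ or $d^{a-n}/(1+(\alpha d)^p)$.

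The central building block is the one-variable estimate
\[
J_a(R) := \int_0^R \frac{r^{a-1}}{1+(\alpha r)^p}\,dr \leq C\,\frac{R^a}{1+(\alpha R)^a},
\]
obtained by splitting the interval at $r=1/\alpha$ and using $p>n\geq a$ on the tail. Applied with $R=d/2$ on $\Omega_x$ and its analogue on $\Omega_y$, and using $[1+(\alpha d)^q][1+(\alpha d)^a] \geq 1+(\alpha d)^{a+q}$, this gives
\[
\Big|\int_{\Omega_x} X_\alpha Y_\alpha\,dv_g\Big| \leq C\,C_X C_Y\, \frac{d^{a+b-n}}{1+(\alpha d)^{a+q}}, \qquad \Big|\int_{\Omega_y} X_\alpha Y_\alpha\,dv_g\Big| \leq C\,C_X C_Y\, \frac{d^{a+b-n}}{1+(\alpha d)^{b+p}},
\]
and the sum is bounded by $C\,C_X C_Y\,d^{a+b-n}(1+(\alpha d)^{\min(a+q,b+p)})^{-1}$. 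On $\Omega_c$ symmetry reduces to the sub-region where $\dg{x,z} \leq \dg{y,z}$, on which $\dg{y,z} \asymp \dg{x,z}$ by the triangle inequality, so polar coordinates around $x$ give
\[
\Big|\int_{\Omega_c} X_\alpha Y_\alpha\,dv_g\Big| \leq C\, C_X C_Y \int_{d/2}^{\operatorname{diam}(M)} \frac{r^{a+b-n-1}}{\left(1+(\alpha r)^p\right)\left(1+(\alpha r)^q\right)}\,dr.
\]

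The three dichotomies of the lemma now emerge from this last integral by splitting at $r=1/\alpha$: when $a+b<n$ the integrand is integrable and the bound is absorbed by the $\Omega_x\cup\Omega_y$ contributions; when $a+b=n$ the integrand is comparable to $r^{-1}$ on $[d/2,1/\alpha]$ whenever $\alpha d < 1/2$, producing the stated logarithm, and is polynomial in $(\alpha d)^{-1}$ otherwise; when $a+b>n$ the integrand is only saved by the $\alpha$-decay, which after computation turns $d^{a+b-n}$ into the prefactor $\alpha^{-(a+b-n)}$ times the shifted decay $(1+(\alpha d)^{\min(a+q,b+p)-(a+b-n)})^{-1}$. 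For the $(\mu+\dg{})$-variant the same splitting applies; only the analogue of $J_b$ changes to $\int_0^{d/2} r^{n-1}(\mu+r)^{b-n}(1+(\alpha r)^q)^{-1}dr$, which is handled by a further split at $r=\mu$, giving the factor $\mu^{b-n}\cdot \min(\mu^n,\alpha^{-n})$ from $[0,\mu]$ and the usual $J_b$-type bound from $[\mu,d/2]$, the dominant piece depending on the sign of $b$.

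The main technical obstacle is the exponent bookkeeping: verifying that the denominators $[1+(\alpha d)^a][1+(\alpha d)^q]$ and $[1+(\alpha d)^b][1+(\alpha d)^p]$ from the two side regions combine with the $\Omega_c$ estimate to give exactly $(1+(\alpha d)^{\min(a+q,b+p)})^{-1}$, and, in the supercritical case $a+b>n$, checking that the piecewise bound $C d^{a+b-n}/(1+(\alpha d)^{\min(a+q,b+p)})$ rewrites correctly as $C\alpha^{-(a+b-n)}/(1+(\alpha d)^{\min(a+q,b+p)-(a+b-n)})$ on both the small-scale $\alpha d \leq 1$ and the large-scale $\alpha d \geq 1$ regimes.
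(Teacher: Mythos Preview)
Your argument is correct and follows the same classical three-region decomposition underlying the paper's proof; the only organizational difference is that the paper first disposes of the far regime $d_g(x,y)\geq 2\delta$ with fixed-radius balls and then, in the near regime, passes to exponential coordinates and rescales by $|Y_0|=d_g(x,y)$ (reducing to a unit configuration before splitting into the regimes $\alpha|Y_0|\lessgtr 1$), whereas you work directly with the scale-adapted balls $B_{d/2}(x)$, $B_{d/2}(y)$ and the radial building block $J_a$. Both routes are standard and lead to the same exponent bookkeeping; your presentation is slightly more self-contained at the cost of implicitly assuming a volume comparison $\mathrm{vol}(B_r)\leq Cr^n$ valid for all $r$ on the compact manifold, which you should state once when invoking polar coordinates on $\Omega_c$.
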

\noindent{\it Proof of Lemma \ref{lem:giraud}:} We take inspiration of the proof in \cite{DHR}. For $x\neq y$ in $M$, we have that

$$|Z_\alpha(x,y)|\leq C_X C_Y \int_M \frac{d_g(x,z)^{a-n}}{1+\alpha^pd_g(x,z)^p}\frac{d_g(z,y)^{b-n}}{1+\alpha^qd_g(z,y)^q}\, dv_g(z),$$
so the definition makes sense since $a,b>0$. We fix $\delta<\frac{i_g}{4}$.

\smallskip\noindent{\it Case 1: } Let us assume that $d_g(x,y)\geq 2\delta$. Then, for all $z\in B_\delta(x)$, we have that $d_g(z,y)\geq d_g(x,y)-d_g(x,z)\geq \delta$. Arguing similarly on $B_\delta(y)$ and splitting $M$ in three subdomains, we get $C=C(M,g,\delta,a,b,p,q)$ such that
\begin{eqnarray*}
|Z_\alpha(x,y)|&\leq&  C_X C_Y \left(\int_{B_\delta(x)}+\int_{B_\delta(y)}+\int_{\{d_g(z,y)\geq\delta\hbox{ and }d_g(z,x)\geq\delta\}}  \right)\\
%&\leq & C_X C_Y \left(\int_{B_\delta(x)} \frac{d_g(x,z)^{a-n}}{1+\alpha^pd_g(x,z)^p}\frac{dv_g(z)}{\alpha^q}+\int_{B_\delta(y)} \frac{d_g(z,y)^{b-n}}{1+\alpha^qd_g(z,y)^q}\frac{dv_g(z)}{\alpha^p}+\frac{C}{\alpha^{p+q}} \right)\\
%&\leq & C_X C_Y C \left(\frac{1}{\alpha^q} \int_{0}^\delta \frac{r^{a-1}}{1+\alpha^pr^p}\, dr+\frac{1}{\alpha^p} \int_{0}^\delta \frac{r^{b-1}}{1+\alpha^qr^q}\, dr+\frac{1}{\alpha^{p+q}} \right)\\
%&\leq & C_X C_Y C\left(\frac{1}{\alpha^{a+q}} \int_{0}^{\delta\alpha} \frac{s^{a-1}}{1+s^p}\, ds+\frac{1}{\alpha^{b+p}} \int_{0}^{\delta\alpha} \frac{s^{b-1}}{1+s^q}\, ds+\frac{1}{\alpha^{p+q}} \right)\\
&\leq & C_X C_Y C\left(\frac{1}{\alpha^{a+q}}  +\frac{1}{\alpha^{b+p}}  +\frac{1}{\alpha^{p+q}} \right)\leq \frac{C_X C_Y C}{\alpha^{\min\{a+q,b+p\}}}
\end{eqnarray*}
since $p,q>n$,  $0<a,b\leq n$ and $\alpha\geq 1$. This proves the Lemma when $d_g(x,y)\geq 2\delta$.

\smallskip\noindent {\it Case 2: } We assume that $d_g(x,y)<2\delta$. Let $Y_0\in \rn$ be such that $y=\hbox{exp}_x(Y_0)$. In particular $0<|Y_0|< 2\delta$. For $z\in M-B_{3\delta}(x)$, we have that $d_g(y,z)\geq d_g(z,x)-d_g(x,y)\geq \delta$. The change of variable $z:=\hbox{exp}_x^g(Z)$ yields
\begin{eqnarray*}
|Z_\alpha(x,y)|&\leq&  C_X C_Y \left(\int_{B_{3\delta}(x)}+\int_{M-B_{3\delta}(x)} \right)\\
%&\leq &  C_X C_Y C \left(\int_{B_{3\delta}(0)} \frac{|Z|^{a-n}}{1+\alpha^p|Z|^p}\cdot\frac{d_g(\hbox{exp}_x(Z),\hbox{exp}_x(Y_0))^{b-n}}{1+\alpha^qd_g(\hbox{exp}_x(Z),\hbox{exp}_x(Y_0))^q}\, dZ+\frac{1}{\alpha^{p+q}} \right)\\
&\leq &  C_X C_Y C \left(\int_{B_{3\delta}(0)} \frac{|Z|^{a-n}}{1+\alpha^p|Z|^p}\cdot\frac{|Z-Y_0|^{b-n}}{1+\alpha^q|Z-Y_0|^q}\, dZ+\frac{1}{\alpha^{p+q}} \right)\\
 \end{eqnarray*}
We set $\Theta_0:=\frac{Y_0}{|Y_0|}$ so that $|\Theta_0|=1$. The change of variable $Z=|Y_0|X$ yields
\begin{multline*}
|Z_\alpha(x,y)|\\\leq   C_X C_Y C \left(\int_{B_{\frac{3\delta}{|Y_0|}}(0)} \frac{|X|^{a-n}|Y_0|^{a+b-n}}{1+\alpha^p|Y_0|^p|X|^p}\cdot\frac{|X-\Theta_0|^{b-n}}{1+\alpha^q|Y_0|^q|X-\Theta_0|^q}\, dX+\frac{1}{\alpha^{p+q}} \right)\label{ineq:Za:1}
 \end{multline*}
In order to get the result, one needs to separate the cases $\alpha|Y_0|\leq  \frac{1}{3}$, $\frac{1}{3}\leq \alpha|Y_0|\leq 3$, and $\alpha|Y_0|\geq 3$, with the distinction $a+b>n$, $a+b=n$, and $a+b<n$.

\subsection{Step 4: Construction of the Neumann series} We fix $q\geq 1$ and an integer $N\geq E\left(\frac{n}{2}\right)+\frac{q}{2}$. For $x,y\in M$, $x\neq y$ and $\alpha\geq 1$, as in   \cite{robert:green}, we set
\begin{equation}\label{def:G:alpha}
G_\alpha(x,y):=H_\alpha(x,y)+\sum_{i=1}^N\int_M H_\alpha^i(x,z)H_\alpha(z,y)\, dv_g(z)+\phi_\alpha(x,y)
\end{equation}
where $H_\alpha$ is as in \eqref{def:H:alpha}, $\phi_\alpha(x,\cdot)\in C^{2k}(M)$ for all $x\in M$ and the $H_\alpha^i$'s  are defined recursively as follows:
$$\left\{\begin{array}{cc}
H_\alpha^1(x,y):= -f_\alpha(x,y)&\\
H_\alpha^{i+1}(x,y)=-\int_{M}H_\alpha^i(x,z)f_\alpha(z,y)\, dv_g(z)&\hbox{for all }i\geq 0\end{array}\right.$$
for all $x,y\in M$, $x\neq y$, where $f_\alpha$ is as in \eqref{eq:H:alpha} and \eqref{bnd:f:alpha}.  Iterating  Lemma \ref{lem:giraud}  yields
%depending on the parity of $n$, we get that
\begin{equation*}
\hbox{for }i<E\left(\frac{n}{2}\right)\hbox{ or }\{i=E\left(\frac{n}{2}\right),\, n\hbox{ odd}\},\;|H_\alpha^i(x,y)|\leq C\frac{d_g(x,y)^{2i-n}}{1+(\alpha d_g(x,y))^q}
\end{equation*}
\begin{equation*}
\hbox{for }\{i=E\left(\frac{n}{2}\right),\, n\hbox{ even}\},\,|H_\alpha^{i}(x,y)|\leq C\left\{\begin{array}{cc}
|\ln (\alpha d_g(x,y))| &\hbox{ if }\alpha d_g(x,y)<\frac{1}{2}\\
\frac{1}{(\alpha d_g(x,y))^q}&\hbox{ if }\alpha d_g(x,y)\geq \frac{1}{2}
\end{array}\right.
\end{equation*}
\begin{equation}\label{est:H:N}
\hbox{for }i>E\left(\frac{n}{2}\right) ,\;|H_\alpha^i(x,y)|\leq \frac{C}{\alpha^{2(i-E(n/2))-1}}\cdot \frac{1}{1+(\alpha d_g(x,y))^q}
\end{equation}
for all $x,y\in M$, $x\neq y$ and $\alpha\geq 1$.
Noting that for any $\alpha\geq 1$ and $x\in M$, we have that $H_\alpha^{N+1}(x,\cdot)\in C^{0,\theta}(M)$ for some $0<\theta<1$ (see for instance \cite{DHR}), we let $\phi_\alpha(x,\cdot)\in C^{2k}(M)$ be such that
\begin{equation}\label{def:phi:a}
\Delta_g^k \phi_\alpha(x,\cdot)+\alpha^{2k}\phi_\alpha(x,\cdot)=H_\alpha^{N+1}(x,\cdot)\hbox{ in }M.
\end{equation}
The standard cancellation in the Neumann series (see for instance \cite{robert:gjms}) yields
%\begin{equation*}
%\Delta_g^k G_\alpha(x,\cdot)+\alpha^{2k}G_\alpha(x,\cdot)=\delta_x+(\Delta_g^k +\alpha^{2k})\phi_\alpha(x,\cdot)-H_\alpha^{N+1}(x,\cdot)\hbox{ weakly in }M.
%\end{equation*}
%
%Therefore, we have that
\begin{equation}\label{eq:G:dist}
\Delta_g^k G_\alpha(x,\cdot)+\alpha^{2k}G_\alpha(x,\cdot)=\delta_x\hbox{ weakly in }M.
\end{equation}
Since $\Delta_g^k+\alpha^{2k}$ is self-adjoint, we get that $G_\alpha(x,y)=G_\alpha(y,x)$ for all $x,y\in M$ such that $x\neq y$.
\subsection{Step 5: Regularity for $\Delta_g^k+\alpha^{2k}$}
\begin{lemma}\label{lem:regul} Let  $(h_\alpha)_{\alpha\geq 1}\in L^\infty(M)$ be a family of functions. For all $\alpha\geq 1$, let $\varphi_\alpha\in H_k^2(M)$ be the variational solution to 
\begin{equation}\label{eq:phi:alpha}
\Delta_g^k \varphi_\alpha+\alpha^{2k}\varphi_\alpha=h_\alpha\hbox{ in }M.
\end{equation}
Assume that there exists $p\in M$, $C_H>0$ and $q>\frac{n+2k}{2}$ such that $|h_\alpha(x)|\leq C_H(1+(\alpha d_g(x,p))^{-q}$  for all $x\in M$ and $\alpha\geq 1$. Then there exists $C=C(M,g,k)>0$ such that for all $0\leq i<2k$, we have that $|\nabla^i_g\varphi_\alpha(x)|_g\leq  C\cdot C_H \alpha^{i-2k}$ for all $x\in M$ and $\alpha\geq 1$.\end{lemma}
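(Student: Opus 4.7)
The plan is to combine the Green's function representation with an elliptic rescaling argument. Since the pointwise hypothesis gives $|h_\alpha|\leq C_H$ globally, standard elliptic theory (\cite{ADN}) yields $\varphi_\alpha\in C^{2k-1,\theta}(M)$, and applying the defining identity of $G_\alpha$ in Theorem \ref{th:green:M} to $\varphi_\alpha$ (after a density argument) gives the representation
$$\varphi_\alpha(x)=\int_M G_\alpha(x,y)\,h_\alpha(y)\,dv_g(y)\qquad\forall\, x\in M.$$
For the $L^\infty$ bound, using \eqref{bnd:der:G} with $l=0$ and $q$ large enough, a direct computation in Riemannian polar coordinates gives
$$\int_M|G_\alpha(x,y)|\,dv_g(y)\leq C\int_0^{\operatorname{diam}(M)}\frac{r^{2k-1}}{1+(\alpha r)^q}\,dr\leq C\alpha^{-2k},$$
whence $|\varphi_\alpha(x)|\leq C\,C_H\,\alpha^{-2k}$ uniformly in $x\in M$.

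For the derivative estimates I would rescale at an arbitrary base point. Fix $x_0\in M$ and set
$$\tilde{\varphi}_\alpha(X):=\alpha^{2k}\varphi_\alpha(\exp_{x_0}^g(X/\alpha)),\qquad \tilde{h}_\alpha(X):=h_\alpha(\exp_{x_0}^g(X/\alpha))$$
on $B_R(0)\subset\rn$ for any fixed $R>0$ and $\alpha\geq R/i_g$. With $\tilde{g}_\alpha:=(\exp_{x_0}^g)^\star g(\cdot/\alpha)$, compactness of $M$ gives $\tilde{g}_\alpha\to\xi$ in $C^{2k}_{loc}(\rn)$ uniformly in $x_0$, and $\tilde{\varphi}_\alpha$ satisfies
$$(\Dg[\tilde{g}_\alpha]^k+1)\tilde{\varphi}_\alpha=\tilde{h}_\alpha\qquad\text{in } B_R(0),$$
with the uniform bound $\|\tilde{\varphi}_\alpha\|_{L^\infty(B_R)}+\|\tilde{h}_\alpha\|_{L^\infty(B_R)}\leq C\,C_H$ from the previous step. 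Interior elliptic regularity then yields $\|\tilde{\varphi}_\alpha\|_{C^{2k-1}(B_{R/2})}\leq C\,C_H$ with $C$ independent of $\alpha$ and $x_0$, and unwinding the scaling at $X=0$ gives $|\nabla_g^i\varphi_\alpha(x_0)|\leq C\,C_H\,\alpha^{i-2k}$ for all $0\leq i<2k$.

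The main technical point will be ensuring the elliptic-estimate constant is truly independent of $\alpha$ and $x_0$; this follows from the $C^{2k}$-convergence $\tilde{g}_\alpha\to\xi$ (uniform in the base point by compactness of $M$), so one only needs the Schauder estimate for the limiting constant-coefficient operator $\Delta_\xi^k+1$ plus a perturbation argument. An alternative avoiding rescaling is to differentiate the representation formula directly: by the symmetry $G_\alpha(x,y)=G_\alpha(y,x)$, \eqref{bnd:der:G} also controls $|\nabla_{g,x}^i G_\alpha(x,y)|$, and repeating the polar computation with $r^{2k-i-1}$ in place of $r^{2k-1}$ yields the same $O(\alpha^{i-2k})$ bound for $q$ sufficiently large. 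Neither route uses the decay of $h_\alpha$ beyond its $L^\infty$ bound.
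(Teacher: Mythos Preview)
Your argument is circular in the context of the paper. Lemma \ref{lem:regul} appears as Step~5 in the construction of the Green's function of Theorem \ref{th:green:M}; the pointwise bound \eqref{bnd:der:G} that you invoke---both for the representation formula giving the $L^\infty$ estimate and in your alternative ``differentiate the kernel'' route---is only established in Step~6, \emph{using} this very lemma to control the smooth remainder $\phi_\alpha$ in the Neumann-series parametrix \eqref{def:G:alpha}--\eqref{def:phi:a}. So at the point where the lemma is needed, neither the representation $\varphi_\alpha=\int_M G_\alpha(x,\cdot)h_\alpha$ with a kernel obeying \eqref{bnd:der:G}, nor the $L^1$-bound $\int_M|G_\alpha(x,\cdot)|\,dv_g\leq C\alpha^{-2k}$, is available.

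The paper avoids the Green's function entirely here. It uses the decay hypothesis $q>\frac{n+2k}{2}$ to get $\Vert h_\alpha\Vert_{L^{2n/(n+2k)}}\leq C\,C_H\,\alpha^{-(n+2k)/2}$, then the uniform coercivity of $\Delta_g^k+\alpha^{2k}$ together with Sobolev's inequality yields $\Vert\varphi_\alpha\Vert_{L^{\crit}}\leq C\,C_H\,\alpha^{-(n+2k)/2}$. After that, the argument is essentially your rescaling step: one blows up at an arbitrary point, the change of variables turns the global $L^{\crit}$ bound into $\Vert\tilde\varphi_\alpha\Vert_{L^{\crit}(B_R)}\leq C\,C_H\,\alpha^{-2k}$, and interior elliptic regularity closes the estimate. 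So your remark that ``neither route uses the decay of $h_\alpha$ beyond its $L^\infty$ bound'' is precisely where the gap lies: without the Green's-function estimate, the decay hypothesis is what provides the needed smallness of $h_\alpha$ in $L^{2n/(n+2k)}$, and the paper's proof genuinely relies on it.
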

\noindent{\it Proof of Lemma \ref{lem:regul}:} Since $q>\frac{n+2k}{2}$, a straightforward estimate yields $\Vert h_\alpha\Vert_{\frac{2n}{n+2k}}\leq C\cdot C_H \alpha^{-\frac{n+2k}{2}}$ for all $\alpha\geq 1$. Since $\alpha\geq 1$, the operator $\Delta_g^k+\alpha^{2k}$ is uniformly coercive and then, using Sobolev's inequality \eqref{ineq:AB}, we have that
\begin{eqnarray*}
\Vert \varphi_\alpha\Vert_{H_k^2}^2&\leq& C\int_M \left((\Delta_g^\frac{k}{2}\varphi_\alpha)^2+\varphi_\alpha^2\right)\, dv_g\leq \int_M \left((\Delta_g^\frac{k}{2}\varphi_\alpha)^2+\alpha^{2k}\varphi_\alpha^2\right)\, dv_g\\
&\leq & \int_M (\Delta_g^k\varphi_\alpha+\alpha^{2k}\varphi_\alpha)\varphi_\alpha\, dv_g=\int_M h_\alpha\varphi_\alpha\, dv_g\leq \Vert h_\alpha\Vert_{\frac{2n}{n+2k}}\Vert \varphi_\alpha\Vert_{\crit}\\
&\leq& C \Vert h_\alpha\Vert_{\frac{2n}{n+2k}}\Vert \varphi_\alpha\Vert_{H_k^2}
\end{eqnarray*} 
and therefore $\Vert \varphi_\alpha\Vert_{\crit}\leq C\Vert \varphi_\alpha\Vert_{H_k^2}\leq C \Vert h_\alpha\Vert_{\frac{2n}{n+2k}}\leq C\cdot C_H \alpha^{-\frac{n+2k}{2}}$ for all $\alpha\geq 1$. We fix a family $(x_\alpha)_{\alpha\geq 1}\in M$ and we define $\tilde{\varphi}_\alpha(X):=\varphi_\alpha(\hbox{exp}_{x_\alpha}(\alpha^{-1}X))$ for all $X\in B_{\alpha i_g}(0)\subset\rn$. Equation \eqref{eq:phi:alpha} rewrites $\Delta_{g_\alpha}^k\tilde{\varphi}_\alpha+\tilde{\varphi}_\alpha=\tilde{h}_\alpha(X):=\alpha^{-2k}h_\alpha(\hbox{exp}_{x_\alpha}(\alpha^{-1}X))$,
where $g_\alpha:=\hbox{exp}_{x_\alpha}^\star g(\alpha^{-1}\cdot)$. We fix $R>0$. We get
\begin{eqnarray*}
\Vert \tilde{\varphi}_\alpha\Vert_{L^{\crit}(B_R(0))}&=&\left(\int_{B_R(0)}|\tilde{\varphi}_\alpha|^{\crit}\, dX\right)^{\frac{1}{\crit}}\leq C\alpha^{\frac{n-2k}{2}}\left(\int_{B_{R\alpha^{-1}}(x_\alpha)}| \varphi _\alpha|^{\crit}\, dv_g\right)^{\frac{1}{\crit}}\\
&\leq& C\alpha^{\frac{n-2k}{2}}\Vert \varphi_\alpha\Vert_{\crit}\leq C\cdot C_H \alpha^{-2k}.
\end{eqnarray*}
We fix $p>1$ large enough. We fix $i\leq 2k-1$. Sobolev's embedding and elliptic regularity (see  \cite{ADN} or the Appendix in \cite{robert:gjms}) yield
\begin{eqnarray*}
|\nabla^i_\xi\tilde{\varphi}_\alpha(0)|&\leq & C\Vert \tilde{\varphi}_\alpha\Vert_{C^{2k-1}(B(0,R/2))}\leq  C\Vert \tilde{\varphi}_\alpha\Vert_{H_{2k}^p(B(0,R/2))}\\
%&\leq & C\left(\Vert \tilde{h}_\alpha\Vert_{L^p(B(0,R))}+\Vert \tilde{\varphi}\Vert_{L^{\crit}(B(0,R))}\right)\\
&\leq & C\left(\Vert \tilde{h}_\alpha\Vert_{L^\infty(B(0,R))}+\Vert \tilde{\varphi}_\alpha\Vert_{L^{\crit}(B(0,R))}\right)\\
&\leq& C\cdot C_H\left(\alpha^{-\frac{n+2k}{2}}+\alpha^{-2k}\right)\leq C\cdot C_H\ \alpha^{-2k}.
\end{eqnarray*}
Coming back to $\varphi_\alpha$, we get that $|\alpha^{-i}\nabla_g^i\varphi_\alpha(x_\alpha)|\leq C\sum_{j\leq i}|\nabla^j_\xi\tilde{\varphi}_\alpha(0)|C\cdot C_H\ \alpha^{-2k}$. This proves the Lemma.\qed

\subsection{Step 6: Pointwise estimates and conclusions.}
We fix $q>\frac{n+2k}{2}$ and we define $N\geq E\left(\frac{n}{2}\right) +\frac{q}{2}$ as in the construction. We let $\phi_\alpha$ be as in \eqref{def:phi:a}. With the pointwise control \eqref{est:H:N} and Lemma \ref{lem:regul}, we get that
\begin{equation*}%\label{ineq:psi}
|\nabla_y^i\phi_\alpha(x,\cdot)|\leq   \frac{C\alpha^i }{\alpha^{2(N -E(n/2))+1+2k}} \leq \frac{C\alpha^i }{\alpha^{q+1 +2k}}\leq \frac{C }{\alpha^{q+1}} \hbox{ for all }i<2k.
\end{equation*}
With \eqref{def:H:alpha} and the controls \eqref{est:H:N}, we get that for all $1\leq i\leq N$ 
\begin{equation*}
\left|\int_M H_\alpha^i(x,z)H_\alpha(z,y)\, dv_g(z)\right|\leq C \frac{d_g(x,y)^{2k-n}}{1+(\alpha d_g(x,y))^q}
\end{equation*}
for all $x,y\in M$, $x\neq y$. With the definition \eqref{def:G:alpha}, we get that
$$|G_\alpha(x,y)|\leq C \frac{d_g(x,y)^{2k-n}}{1+(\alpha d_g(x,y))^q}+\frac{C}{\alpha^{q}}\leq C \frac{d_g(x,y)^{2k-n}}{1+(\alpha d_g(x,y))^q}.$$
The upper bound of the derivatives is similar, and we get Theorem \ref{th:green:M}.
%
%Concerning derivatives, as one checks, for all $l\leq 2k-1$, we have that
%$$|\nabla_y^l H_\alpha(x,y)|\leq C \frac{d_g(x,y)^{2k-n-l}}{1+(\alpha d_g(x,y))^q}\hbox{ for all }x\neq y\in M\hbox{ and }\alpha\geq 1.$$
%Therefore, we get that for $1\leq i\leq N$,
%$$|\nabla_y^l (H_\alpha^i(x,z)H_\alpha(z,y))|\leq C \frac{d_g(x,z)^{2-n}}{1+(\alpha d_g(x,z))^q}\cdot \frac{d_g(z,y)^{2k-n-l}}{1+(\alpha d_g(z,y))^q}$$
%for all $x,y,z\in M$, $z\neq x,y$  and $\alpha\geq 1$. It then follows from these inequalities, integration theory and the definition    \eqref{def:G:alpha} of $G_\alpha$ that $G_\alpha(x,\cdot)\in C^{2k-1}(M-\{x\})$ and that
%$$|\nabla_y^l G_\alpha(x,y)|\leq C \frac{d_g(x,y)^{2k-n-l}}{1+(\alpha d_g(x,y))^q}\hbox{ for all }x\neq y\in M\hbox{ and }\alpha\geq 1.$$
%This inequality and \eqref{eq:G:dist} then yield Theorem \ref{th:green:M}. 

\end{document}